\documentclass{amsart}
\usepackage{amssymb}

\makeatletter
\@namedef{subjclassname@2020}{2020 Mathematics subject classification}
\makeatother

\vfuzz2pt 
\hfuzz2pt 
\newtheorem{thm}{Theorem}[section]
\newtheorem{cor}[thm]{Corollary}
\newtheorem{lem}[thm]{Lemma}

\newtheorem{conj}[thm]{Conjecture}
\theoremstyle{definition}

\theoremstyle{remark}

\numberwithin{equation}{section}
\begin{document}

\title[]{Distribution of residues of an algebraic
number modulo ideals of degree one}%
\author{Chunlin Wang}%
\address{School of Mathematical Siences, Sichuan Normal University, Chendu, China}%
\email{c-l.wang@outlook.com}%

\thanks{}%
\subjclass[2020]{11L07 11R04}%
\keywords{uniform distribution, degree one ideals, polynomial congruences}%


\begin{abstract}
  Let $f(x)$ be an irreducible polynomial with integer coefficients of degree at least two. Hooley proved that the roots of the congruence equation $f(x)\equiv 0\mod n$ is uniformly distributed. as a parallel of Hooley's theorem under ideal theoretical setting, we prove the uniformity of the distribution of residues of an algebraic number modulo degree one ideals. Then using this result we show that the roots of a system of polynomial congruences are uniformly distributed. Finally, the distribution of digits of n-adic expansions of an algebraic number is discussed.
\end{abstract}
\maketitle

\section{Introduction}
Let $f(x)$ be a primitive, irreducible polynomial of degree $\ge 2$ with integer coefficients. The congruence equation $f(x)\equiv0\mod n$ has long been studied. One important question is how the roots $v$ of this  congruence equation are distributed as $n$ runs through all positive integers. Hooley \cite{Ho} showed that all the ratios $v/n$ with $0\le v<n, f(v)\equiv0\mod n$, when arranged as a sequence such that the dominators $n$ are ascending, is uniformly distributed. Here a sequence $x_n\in[0,1]$ is said to be uniformly distributed (with respect to the Lebesgue measure) if
$$\lim_{N\rightarrow\infty}\frac{|\{n<N:
a\le x_n<b\}|}{N}=b-a$$
for all real numbers $a$ and $b$ with $0\le a<b\le 1$, or equivalently, by Weyl's criterion \cite{We}, if and only if, for all nonzero integers $h$,
$$\lim_{N\rightarrow\infty}\frac{1}{N}\sum_{n=1}^N e^{2\pi ihx_n}=0.$$

In this paper, we first set up a result resembling that of Hooley's under ideal theoretic settings. Before doing so, we need some notations. Let $L$ be a number field and $\mathcal{O}$ be its ring of algebraic integers. For $\alpha\in L$, let $\eta$ be any positive integer such that   $\eta\alpha\in\mathcal{O}$. Then $\alpha$ is integral over $\mathbb{Z}[\frac{1}{\eta}]$. We consider the extension of Dedekind domains $\mathcal{O}[\frac{1}{\eta}]/\mathbb{Z}[\frac{1}{\eta}]$ instead of $\mathcal{O}/\mathbb{Z}$ in this article, so that $\alpha$ can be treated exactly as an algebraic integer.

For a prime ideal $\mathfrak{p}$ of $\mathcal{O}[\frac{1}{\eta}]$, let $p$ be the prime number such that
$$\mathfrak{p}\cap\mathbb{Z}[1/\eta]=p\mathbb{Z}[1/\eta].$$
The inertia degree of $\mathfrak{p}$ is defined to be the extension degree of the residue fields
$$[\mathcal{O}[1/\eta]/\mathfrak{p}:\mathbb{Z}
[1/\eta]/(p)]
=\frac{\log |\mathcal{O}[\frac{1}{\eta}]/\mathfrak{p}|}
{\log p},$$
and the ramification index is the largest integer $e$ such that $\mathfrak{p}^e|p$. A prime ideal $\mathfrak{p}$ is called unramified if its ramification index equals one, otherwise it is ramified. For any nontrivial ideal $\mathfrak{a}\subset\mathcal{O}[\frac{1}{\eta}]$, we call $\mathfrak{a}$ unramified if all its prime factors are unramified. Also we define the inertia degree of $\mathfrak{a}$ to be
$$\mathfrak{i}(\mathfrak{a})
:=\frac{\log|\mathcal{O}[\frac{1}{\eta}]/\mathfrak{a}|}
{\log|\mathbb{Z}[\frac{1}{\eta}]
/(\mathfrak{a}\cap\mathbb{Z}[\frac{1}{\eta}])|}.$$
Let
$\mathfrak{N}(\mathfrak{a}):=[\mathcal{O}[\frac{1}{\eta}]:\mathfrak{a}]$
be the absolute norm of $\mathfrak{a}$. The absolute norm is completely  multiplicative, i.e., for any ideals $\mathfrak{a}_1,\mathfrak{a}_2$,
$$\mathfrak{N}(\mathfrak{a}_1\mathfrak{a}_2)=\mathfrak{N}
(\mathfrak{a}_1)\mathfrak{N}(\mathfrak{a}_2).$$
For a principal ideal $(\alpha)$, its absolute norm is simply denoted by $\mathfrak{N}(\alpha)$. Note that $\mathfrak{N}(\alpha)$ does not necessarily equal to $|\prod_{\sigma}\sigma(\alpha)|$, where the $\sigma$  runs through all embeddings of $L$ to $\mathbb{C}$. But their quotient is an unite in $\mathbb{Z}[\frac{1}{\eta}]$.

We are interested in ideals satisfying $\mathfrak{i}(\mathfrak{a})=1$, or equivalently,
$$\mathcal{O}[1/\eta]/\mathfrak{a}\cong\mathbb{Z}
[1/\eta]/(\mathfrak{a}\cap\mathbb{Z}[1/\eta]),$$
which are called ideals of {\it inertia degree one} (or simply degree one). Let $n$ be the least positive integer prime to $\eta$ contained in the ideal $\mathfrak{a}\cap\mathbb{Z}[\frac{1}{\eta}]$. Then $n$ generates $\mathfrak{a}\cap\mathbb{Z}[\frac{1}{\eta}]$ and
$$\mathbb{Z}[1/\eta]/(\mathfrak{a}\cap\mathbb{Z}[1/\eta])
\cong\mathbb{Z}/n\mathbb{Z}.$$
So if $\mathfrak{a}$ is of degree one, then $n=\mathfrak{N}(\mathfrak{a})$ and there is a unique integer in
$[0,\mathfrak{N}(\mathfrak{a}))$, denoted by $\alpha(\mathfrak{a})$, such that $\alpha-\alpha(\mathfrak{a})\in \mathfrak{a}$, or say,
$$\alpha\equiv\alpha(\mathfrak{a})\mod \mathfrak{a}.$$
The following result relates the degree one ideals with the roots of a polynomial congruence.

\begin{thm}\label{thm1}
Let $\mathfrak{a}\subset\mathcal{O}[\frac{1}{\eta}]$ be an ideal, and
$$\mathfrak{a}=\mathfrak{p}_1^{e_1}\cdots\mathfrak{p}_r^{e_r}$$
be its unique factorization into prime ideals. Then
$\mathfrak{a}$ is of degree one if and only if the following two are true:

{\rm (i)} for $1\le i\le r$, each $\mathfrak{p}_i$ is of degree one, and $e_i=1$ if $\mathfrak{p}_i$ is ramified;

{\rm (ii)} $\gcd(\mathfrak{N}(\mathfrak{p}_i),\mathfrak{N}(\mathfrak{p}_j))=1$ for  $1\le i<j\le r$.

Furthermore, let $f$ be the primitive minimal polynomial of $\alpha$ over $\mathbb{Z}$ and $D_f$ be the discriminant of $f$. Suppose that $L=\mathbb{Q}(\alpha)$. If $n$ is prime to $\eta D_f$, then there is an one to one correspondence between the roots $v$ of $f(x)\equiv 0 \mod n$ and ideals $\mathfrak{a}$ of inertia degree one with $\mathfrak{N}(\mathfrak{a})=n$ given by
$$v\mapsto \mathfrak{a}=(\alpha-v,n)\ and\
\mathfrak{a}\mapsto v=\alpha(\mathfrak{a}).$$
\end{thm}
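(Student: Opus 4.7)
The plan is to handle the two assertions of the theorem separately: the iff characterization of degree-one ideals, and the explicit bijection with roots of $f \bmod n$.

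For the first part, I would compute both quantities in the definition of $\mathfrak{i}(\mathfrak{a})$ from the prime factorization. By the Chinese Remainder Theorem in the Dedekind domain $\mathcal{O}[1/\eta]$,
$$|\mathcal{O}[1/\eta]/\mathfrak{a}| = \prod_i \mathfrak{N}(\mathfrak{p}_i)^{e_i} = \prod_i p_i^{f_i e_i},$$
where $p_i$ is the rational prime below $\mathfrak{p}_i$ and $f_i, e'_i$ are its inertia degree and ramification index. For the denominator, letting $q_1,\ldots,q_t$ be the distinct rational primes appearing below the $\mathfrak{p}_i$, I would show that $\mathfrak{a}\cap\mathbb{Z}[1/\eta] = n\mathbb{Z}[1/\eta]$ with $n = \prod_j q_j^{c_j}$ and $c_j = \max_{i:\,p_i=q_j} \lceil e_i/e'_i\rceil$, using the criterion that $p^m\mathcal{O}[1/\eta] \subseteq \mathfrak{p}^e$ iff $me'\geq e$ together with unique factorization of $p\mathcal{O}[1/\eta]$. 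Equating numerator and denominator and invoking unique factorization in $\mathbb{Z}$, the condition $\mathfrak{i}(\mathfrak{a}) = 1$ becomes, for each $j$,
$$\sum_{i:\,p_i=q_j} f_i e_i = \max_{i:\,p_i=q_j} \lceil e_i/e'_i\rceil.$$
The crude bounds $f_i e_i \geq e_i \geq \lceil e_i/e'_i\rceil$ then force exactly one $\mathfrak{p}_i$ above each $q_j$ (giving (ii)) together with $f_i=1$ and either $e'_i=1$ or $e_i=1$ (giving (i)); the converse is a routine reversal of these equalities.

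For the second part, the coprimality $\gcd(n,\eta D_f)=1$ lets me identify the reduction ring cleanly. Since $\eta$ is invertible modulo $n$, one has $\mathcal{O}[1/\eta]/n\mathcal{O}[1/\eta] \cong \mathcal{O}/n\mathcal{O}$, and since $[\mathcal{O}:\mathbb{Z}[\alpha]]^2$ divides $D_f$ and is thus prime to $n$, further $\mathcal{O}/n\mathcal{O} \cong \mathbb{Z}[\alpha]/n\mathbb{Z}[\alpha] \cong (\mathbb{Z}/n\mathbb{Z})[x]/(\bar f(x))$ via $\alpha\mapsto x$. Under this identification $(\alpha-v,n)$ corresponds to $(x-v,\bar f(x))$, whose quotient is $\mathbb{Z}/\gcd(n,f(v))\mathbb{Z}$; this is $\mathbb{Z}/n\mathbb{Z}$ exactly when $n\mid f(v)$, in which case the composite $\mathbb{Z}[1/\eta]\to\mathcal{O}[1/\eta]/(\alpha-v,n)$ is visibly the natural surjection to $\mathbb{Z}/n\mathbb{Z}$. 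Hence $(\alpha-v,n)$ has inertia degree one and norm $n$, and by construction $\alpha((\alpha-v,n)) = v$.

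For the inverse direction, given a degree-one ideal $\mathfrak{a}$ of norm $n$ and $v = \alpha(\mathfrak{a})$, applying $f$ to $\alpha \equiv v \pmod{\mathfrak{a}}$ yields $f(v) \in \mathfrak{a}\cap\mathbb{Z}[1/\eta] = n\mathbb{Z}[1/\eta]$, hence $n\mid f(v)$ in $\mathbb{Z}$ since $\gcd(n,\eta)=1$. The inclusion $(\alpha-v,n) \subseteq \mathfrak{a}$ is immediate, and equality is forced since both sides have absolute norm $n$. The main obstacle I anticipate is the careful handling of $\mathfrak{a}\cap\mathbb{Z}[1/\eta]$ in Part 1: the formula mixes ramification indices and exponents and must be organized prime-by-prime over the distinct rationals $q_j$ before unique factorization in $\mathbb{Z}$ can extract conditions (i) and (ii).
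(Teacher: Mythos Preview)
Your argument for the iff characterization is correct and essentially the paper's: both reduce to comparing $\sum_{i:p_i=q_j} f_ie_i$ against $\max_{i:p_i=q_j}\lceil e_i/e'_i\rceil$ over each rational prime $q_j$, with equality forcing (i) and (ii). You package this via an explicit formula for $\mathfrak a\cap\mathbb Z[1/\eta]$; the paper packages the same inequalities as a cardinality chain, first for a single $\mathfrak p^k$ and then for the product.

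For the bijection your route is genuinely different and cleaner. The paper works prime by prime: for each $p_i\mid n$ it invokes Dedekind's factorization theorem to show $(\alpha-v,p_i)$ is a degree-one prime $\mathfrak p_i$, then proves $(\alpha-v,p_i^{e_i})=\mathfrak p_i^{e_i}$ by a valuation count, and finally assembles $(\alpha-v,n)=\prod_i\mathfrak p_i^{e_i}$ and checks (i)--(ii) from the first part. You bypass this by computing the quotient $\mathcal O[1/\eta]/(\alpha-v,n)$ globally via the identification with $(\mathbb Z/n\mathbb Z)[x]/(\bar f)$, reading off norm $n$ and degree one at once; the inverse then falls out of the containment $(\alpha-v,n)\subseteq\mathfrak a$ together with equality of norms. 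Your approach buys brevity and avoids any local analysis; the paper's approach, in exchange, actually exhibits the prime factorization of $(\alpha-v,n)$, which it reuses later (e.g.\ in Corollary~2.5 and Theorem~4.3).

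One small correction: you write $[\mathcal O:\mathbb Z[\alpha]]$, but $\alpha$ need not lie in $\mathcal O$ (only $\eta\alpha$ does), so this index is undefined as stated. Stay in the localized ring: use $\mathbb Z[1/\eta][\alpha]\subseteq\mathcal O[1/\eta]$, whose conductor contains $D_\alpha$, and note (as the paper does) that $D_f$ and $D_\alpha$ generate the same ideal of $\mathbb Z[1/\eta]$ since the leading coefficient of $f$ is a unit there. This yields $\mathcal O[1/\eta]/n\cong\mathbb Z[1/\eta][\alpha]/n\cong(\mathbb Z/n\mathbb Z)[x]/(\bar f)$ directly, and your computation then goes through unchanged.
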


Let $S_{\eta}$ be the set of all ideals of degree one in $\mathcal{O}[\frac{1}{\eta}]$ and $S_{\eta}(n)$ be the set of elements of $S_{\eta}$ with absolute norm $n$. For a fixed $n$ prime to $\eta D_f$, one derives from the correspondence in Theorem 1.1 that $$\Big\{\frac{\alpha(\mathfrak{a})}{\mathfrak{N}(\mathfrak{a})}:
\mathfrak{a}\in S_{\eta}(n)\Big\}
=\Big\{\frac{v}{n}: 0\le v<n, f(v)\equiv0\mod n\Big\}.$$
So we take $\{\frac{\alpha(\mathfrak{a})}{\mathfrak{N}(\mathfrak{a})}:
\mathfrak{a}\in S_{\eta}\}$ as the substitute of the set of ratios
$\{v/n: n>0, 0\le v<n, f(v)\equiv 0\mod n\}$ under ideal theoretical settings. It's then reasonable to expect that the sequence of ratios $\frac{\alpha(\mathfrak{a})}{\mathfrak{N}(\mathfrak{a})}$ is also uniformly distributed as $\mathfrak{N}(\mathfrak{a})$ are ascending. Note that we supposed $L=\mathbb{Q}(\alpha)$ in Theorem 1.1. While the sequence of ratios $\frac{\alpha(\mathfrak{a})}{\mathfrak{N}(\mathfrak{a})}$ is well defined for any $\alpha\in\mathcal{O}[\frac{1}{\eta}]$, it is natural to ask if the sequence is uniformly distributed without this restriction.  Actually, we have the ideal theoretical parallel of Hooley's theorem.

\begin{thm}\label{thm2}
Let $\eta$ be an positive integer. For any irrational $\alpha\in\mathcal{O}[\frac{1}{\eta}]$, the sequence of the ratios
$$\{\alpha(\mathfrak{a}_i)/\mathfrak{N}(\mathfrak{a}_i)\}_{i=1}^{\infty}$$
is uniformly distributed, where $\mathfrak{a}_i$ runs through all elements in $S_{\eta}$ with
$$\mathfrak{N}(\mathfrak{a}_i)\le \mathfrak{N}(\mathfrak{a}_{i+1}).$$
\end{thm}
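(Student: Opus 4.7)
I would follow Hooley's original argument translated to the ideal-theoretic setting, using Weyl's criterion together with Theorem~\ref{thm1} to convert the problem into an estimate for a polynomial-congruence exponential sum. The Dirichlet series $\sum_{\mathfrak{a}\in S_\eta}\mathfrak{N}(\mathfrak{a})^{-s}$ differs from the Dedekind zeta function of $L$ (with $\eta$-Euler factors removed) by a factor convergent in $\Re s>1/2$, and therefore has a simple pole at $s=1$; a Wiener--Ikehara argument yields $T(x):=\#\{\mathfrak{a}\in S_\eta:\mathfrak{N}(\mathfrak{a})\le x\}\sim Cx$ with $C>0$, so by Weyl's criterion it suffices to prove that for every nonzero integer $h$,
$$E(x,h):=\sum_{\substack{\mathfrak{a}\in S_\eta\\ \mathfrak{N}(\mathfrak{a})\le x}}e^{2\pi ih\alpha(\mathfrak{a})/\mathfrak{N}(\mathfrak{a})}=o(x).$$
Grouping by rational norm gives $E(x,h)=\sum_{n\le x}U(n,h)$ with $U(n,h):=\sum_{\mathfrak{a}\in S_\eta(n)}e^{2\pi ih\alpha(\mathfrak{a})/n}$.

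Let $f$ be the primitive minimal polynomial of $\alpha$ over $\mathbb{Z}$, of degree at least two (since $\alpha$ is irrational), and set $L'=\mathbb{Q}(\alpha)\subseteq L$ with ring of integers $\mathcal{O}'$. The contraction $\mathfrak{a}\mapsto \mathfrak{a}'=\mathfrak{a}\cap\mathcal{O}'[1/\eta]$ sends $S_\eta(n)$ to $S'_\eta(n)$ (the set of degree-one ideals of $\mathcal{O}'[1/\eta]$ of norm $n$), preserves both $\mathfrak{N}(\cdot)$ and $\alpha(\cdot)$, and has fibres of cardinality $c(\mathfrak{a}')$, where $c$ is a bounded non-negative multiplicative function counting degree-one primes of $\mathcal{O}[1/\eta]$ above each prime factor of $\mathfrak{a}'$. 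For $n$ coprime to $\eta D_f$, Theorem~\ref{thm1} applied in $\mathcal{O}'[1/\eta]$ identifies $S'_\eta(n)$ with the roots of $f(v)\equiv 0\pmod{n}$, and thus
$$U(n,h)=\sum_{\substack{0\le v<n\\ f(v)\equiv 0\,(\mathrm{mod}\,n)}}c_n(v)\,e^{2\pi ihv/n}.$$

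When $L=\mathbb{Q}(\alpha)$, we have $c_n\equiv 1$ and $U(n,h)$ is exactly Hooley's exponential sum, so Hooley's theorem gives $\sum_{n\le x}U(n,h)=o(x)$ immediately. For general $L$, $c(\mathfrak{p}')$ depends only on the Frobenius class of $\mathfrak{p}'$ in the Galois group of the Galois closure of $L$ over $L'$, and so decomposes as a finite linear combination $c(\mathfrak{p}')=\sum_\chi a_\chi\chi(\mathrm{Frob}_{\mathfrak{p}'})$ of irreducible characters, with $a_{\mathbf{1}}=1$ by Frobenius reciprocity. Extending each $\chi$ multiplicatively to ideals as a Hecke-style character and interchanging summations then expresses $E(x,h)$ as a finite linear combination of $\chi$-twisted sums $\sum_{\mathfrak{a}'\in S'_\eta,\,\mathfrak{N}(\mathfrak{a}')\le x}\chi(\mathfrak{a}')\,e^{2\pi ih\alpha(\mathfrak{a}')/\mathfrak{N}(\mathfrak{a}')}$; the trivial-character piece is Hooley's sum, and each nontrivial $\chi$-sum should be $o(x)$ by combining Chebotarev's theorem with a character-twisted refinement of Hooley's bounds. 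Moduli sharing a factor with $\eta D_f$ contribute only $O(x^{1-\delta})$ via the trivial bound $|U(n,h)|\le d^{\omega(n)}$.

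The principal obstacle is thus this last step in the case $L\supsetneq\mathbb{Q}(\alpha)$: Hooley's argument relies on delicate Kloosterman-type cancellation, and adapting it to the $\chi$-twisted sums --- while keeping the Chebotarev error terms uniform in the modulus --- is the genuinely new technical ingredient. Everything else is ideal-theoretic bookkeeping around Theorem~\ref{thm1} and standard Tauberian estimates.
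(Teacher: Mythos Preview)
Your plan diverges from the paper's in a genuine way, and in the general case $L\supsetneq\mathbb{Q}(\alpha)$ it contains a real gap.

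\textbf{Where the approaches differ.} You contract ideals to $L'=\mathbb{Q}(\alpha)$, invoke Theorem~\ref{thm1} there to reduce to Hooley's exponential sum weighted by the fibre-count $c(\mathfrak{a}')$, and then try to peel off $c$ via a character decomposition. The paper does \emph{not} descend to $L'$ at all: it re-runs Hooley's Cauchy--Schwarz argument directly in $\mathcal{O}[1/\eta]$, and the only new ingredient is a replacement for Hooley's second-moment lemma. This is Lemma~3.6:
\[
\sum_{a=1}^{n}|\rho_\eta(ah,n)|^2 \le \frac{n\,(h,n)\,\rho_\eta(n)^2}{d^{\omega_P(n)}},
\]
where $d=[\mathbb{Q}(\alpha):\mathbb{Q}]\ge 2$ and $P$ is the set of primes splitting completely in $L$. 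The saving factor $d^{-\omega_P(n)}$ arises from exactly the fibre structure you noticed: for $p\in P$ each root of $f\bmod p$ is hit by $[L:\mathbb{Q}(\alpha)]$ ideals in $S_\eta(p)$, so the diagonal count $g(p)=|\{(\mathfrak{a},\mathfrak{b})\in S_\eta(p)^2:\alpha(\mathfrak{a})=\alpha(\mathfrak{b})\}|$ equals $\rho_\eta(p)^2/d$ rather than $\rho_\eta(p)^2$. Feeding Lemma~3.6 into Hooley's machinery and using a short Chebotarev computation (Lemma~3.5) to control $\sum d^{-\omega_P(n)/2}$ yields $\sum_{n\le x}|\rho_\eta(h,n)|=o(x)$ with no characters and no twisted Kloosterman bounds.

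\textbf{The gap in your plan.} Your decomposition $c(\mathfrak{p}')=\sum_\chi a_\chi\,\chi(\mathrm{Frob}_{\mathfrak{p}'})$ is the permutation-character identity and is correct at primes. But it does \emph{not} extend multiplicatively to ideals in the way you assert: since $c(\mathfrak{a}')=\prod_{\mathfrak{p}'\mid\mathfrak{a}'}c(\mathfrak{p}')$, expanding the product gives a sum over \emph{tuples} of characters (one per prime divisor), not a finite linear combination $\sum_\chi a_\chi\,\tilde\chi(\mathfrak{a}')$ of multiplicatively-extended characters. So $E(x,h)$ is not a finite combination of $\chi$-twisted Hooley sums, and the clean ``trivial character is Hooley, nontrivial characters are $o(x)$'' dichotomy is not available. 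Even setting this aside, you yourself flag that a $\chi$-twisted Hooley bound with Chebotarev errors uniform in the modulus would be the new hard step; the paper's route shows that this step is unnecessary. The elementary second-moment bound of Lemma~3.6 already encodes the fibre multiplicity and closes the argument in one stroke.
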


For the rest of the paper, when we refer to a sequence of ratios, we tacitly assume it is an arrangement of the indicated ratios such that the denominators of the ratios are ascending. By Wyel's criterion, to prove Theorem 1.2 is equivalent to prove
$$\sum_{n\le x}\sum_{\mathfrak{a}\in S_{\eta}(n)}
\exp(2\pi ih\alpha(\mathfrak{a})/n)=o(x),\ \forall h\neq0.$$
The method we used to get this equation is largely equivalent to the one Hooley used in \cite{Ho}. We actually get a little more in section 3:
$$\sum_{n\le x}|\sum_{\mathfrak{a}\in S_{\eta}(n)}
\exp(2\pi ih\alpha(\mathfrak{a})/n)|=o(x),\ \forall h\neq0.$$
This gives us a generalization of Theorem 1.2 (See Theorem 3.8). If one goes further in this direction, we can consider the distribution of the sequence
$$\{\alpha(\mathfrak{a})/\mathfrak{N}(\mathfrak{a}):
\mathfrak{a}\in S' \} \leqno(1.1)$$
for some subset $S'\subset S_{\eta}$. So far the following $S'$ are particularly interested to us:

(a). $S'$ is the set of all prime ideals of inertia degree one;

(b). $S'$ is the set of all ideals $\mathfrak{a}\in S_{\eta}$, where   $\mathfrak{N}(\mathfrak{a})\in\{n^k: n>0 \}$ for given positive integer $k$;

(c). $S'=\{\mathfrak{a}^k:k>0\}$ for fixed $\mathfrak{a}\in S_{\eta}$.

The first one relate to the question of distribution of roots of polynomial congruences to prime moduli, for quadratic case, see \cite{DFI} and \cite{To}.  By assuming the Bouniakowsky conjecture, Foo \cite{Fo} proved the density of the roots to prime moduli.
The later two are related to the distribution of digits of $n$-adic expansions of an given algebraic number, which will be discussed later. \\

Theorem 1.2 is not just a restatement of Hooley's result in language of ideals. In fact, a generalization of Hooley's theorem can be derived from Theorem 1.2.

\begin{thm}
Let $f_1,...,f_r\in\mathbb{Z}[x]$ be primitive irreducible polynomials of degree larger than 1 with discriminants $D_1,...,D_r$ respectively. Suppose that $D_i$'s are pairwisely coprime. Then the sequence of the r-tuples of ratios $(v_1/n,...,v_r/n)$, where $n>0$ runs through all positive integers and $v_i$ runs through roots of $f_i(x)\equiv0\mod n$ for each $i$, is uniformly distributed in $[0,1]^r$.
\end{thm}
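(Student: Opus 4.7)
The plan is to reduce this to Theorem \ref{thm2} in the compositum of the fields $\mathbb{Q}(\alpha_i)$, via a multi-variable analogue of the correspondence in Theorem \ref{thm1}. By the multi-dimensional Weyl criterion, it suffices, for every nonzero $(h_1,\ldots,h_r)\in\mathbb{Z}^r$, to prove
$$\sum_{n\le x}\sum_{(v_1,\ldots,v_r)}\exp\left(\frac{2\pi i(h_1v_1+\cdots+h_rv_r)}{n}\right)=o(x),$$
where $(v_1,\ldots,v_r)$ runs over tuples with $v_i\in[0,n)$ and $f_i(v_i)\equiv 0\mod n$.

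Fix a root $\alpha_i$ of $f_i$, set $L_i=\mathbb{Q}(\alpha_i)$, form the compositum $K=L_1\cdots L_r$, and pick a positive integer $\eta$ divisible by $D_1\cdots D_r$ and by the leading coefficients of the $f_i$, so that $\eta\alpha_i\in\mathcal{O}_K$ for each $i$. The pairwise coprimality of the $D_i$ forces the field discriminants $d_{L_i}$ to be pairwise coprime; a standard argument via Galois closures then makes the $L_i$ pairwise linearly disjoint over $\mathbb{Q}$, so $[K:\mathbb{Q}]=\prod_i[L_i:\mathbb{Q}]$ and, after inverting $\eta$, the natural map $\bigotimes_i\mathcal{O}_{L_i}[1/\eta]\to\mathcal{O}_K[1/\eta]$ is an isomorphism of $\mathbb{Z}[1/\eta]$-algebras. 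This has two consequences. First, $\beta:=h_1\alpha_1+\cdots+h_r\alpha_r\in\mathcal{O}_K[1/\eta]$ is irrational for every nonzero tuple: if some $h_1\ne0$ and $\beta\in\mathbb{Q}$, then $h_1\alpha_1\in\mathbb{Q}(\alpha_2,\ldots,\alpha_r)\cap L_1=\mathbb{Q}$, contradicting $\deg f_1\ge 2$. Second, for each $n$ coprime to $\eta$ there is a bijection between the degree one ideals $\mathfrak{A}$ of $\mathcal{O}_K[1/\eta]$ with $\mathfrak{N}(\mathfrak{A})=n$ and the $r$-tuples $(v_1,\ldots,v_r)$ of roots modulo $n$, given by $\mathfrak{A}\mapsto(\alpha_1(\mathfrak{A}),\ldots,\alpha_r(\mathfrak{A}))$. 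At a single prime $p\nmid\eta$ this combines Theorem \ref{thm1} applied in each $L_i$ with the observation that a factor of $\mathcal{O}_K/p\cong\bigotimes_i\mathcal{O}_{L_i}/p$ has residue field $\mathbb{F}_p$ if and only if it corresponds to a tuple of degree one primes of the $L_i$ above $p$; prime-power moduli follow by Hensel's lemma (valid since $p\nmid D_i$), and composite moduli by the Chinese remainder theorem.

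Since reduction modulo $\mathfrak{A}$ is a ring homomorphism, this bijection rewrites the inner Weyl sum at $(n,\eta)=1$ as $\sum_{\mathfrak{A}\in S_\eta(n)}\exp(2\pi i\beta(\mathfrak{A})/n)$, so the contribution from such $n\le x$ is $o(x)$ by the strong form of Theorem \ref{thm2} quoted in the introduction, applied to the irrational $\beta$. For $n$ with $(n,\eta)>1$, write $n=n_1n_2$ with $n_1\mid\eta^\infty$ and $(n_2,\eta)=1$, and apply the Chinese remainder theorem to factor the inner sum. The factor at $n_1$ is bounded trivially by the number $\rho(n_1)$ of root tuples modulo $n_1$, while the factor at $n_2$ equals $\sum_{\mathfrak{A}\in S_\eta(n_2)}\exp(2\pi i(\beta/n_1)(\mathfrak{A})/n_2)$ (using that $1/n_1\in\mathbb{Z}[1/\eta]$, so $\beta/n_1\in\mathcal{O}_K[1/\eta]$), to which Theorem \ref{thm2} applied to the irrational $\beta/n_1$ gives $o(x/n_1)$ upon summation over $n_2$. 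A truncation argument using the convergence of the finite Euler product $\sum_{n_1\mid\eta^\infty}\rho(n_1)/n_1$ (which follows from the standard bound $\rho(p^k)\ll_p p^{k(1-1/d)}$ with $d=\min_i\deg f_i$) assembles these into the desired $o(x)$.

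The main technical obstacle is the construction and verification of the multi-variable bijection in the second paragraph; this is where the pairwise coprime discriminant hypothesis is used essentially, to identify $\mathcal{O}_K[1/\eta]$ with a tensor product of its constituents and to isolate which factors of $\mathcal{O}_K/p$ have residue field $\mathbb{F}_p$. Once this ideal-theoretic dictionary is in place, the rest is a direct application of Theorem \ref{thm2} together with standard Chinese-remainder bookkeeping to handle the finitely many primes dividing $\eta$.
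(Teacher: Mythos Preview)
Your approach is essentially the paper's: form the compositum $K=\mathbb{Q}(\alpha_1,\ldots,\alpha_r)$, use the pairwise coprime discriminants to obtain linear disjointness and a bijection between degree-one ideals of $\mathcal{O}_K[1/\eta]$ of norm $n$ and root tuples modulo $n$ (your tensor-product formulation is a repackaging of the paper's inductive Theorem~4.3), split off the finitely many bad primes via the Chinese remainder theorem, and feed $\beta=h_1\alpha_1+\cdots+h_r\alpha_r$ into the estimate~(3.4). The one genuine difference is how the $n_2$-factor is handled: the paper proves (3.4) for $h=\bar n_1$ regarded as a \emph{function} of $n_2$ and applies it once with a fixed $\alpha$, whereas you absorb $\bar n_1$ into the algebraic number via $\beta/n_1\in\mathcal{O}_K[1/\eta]$, apply the constant-$h$ version of (3.4) for each fixed $n_1$, and then truncate in $n_1$. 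Since $\bar n_1\beta(\mathfrak{A})\equiv(\beta/n_1)(\mathfrak{A})\pmod{n_2}$, these are literally the same inner sums; your route trades the extra generality built into (3.4) for an elementary truncation, which is a fair exchange. (Note that after bounding the $n_1$-factor by $\rho(n_1)$ you need the absolute-value form of (3.4) for the $n_2$-sum, not just Theorem~1.2 as stated.)

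One small gap to fix: the bound $\rho(p^k)\ll_p p^{k(1-1/d)}$ you quote for the number of root \emph{tuples} does not follow from the single-polynomial bound it alludes to---multiplying $r$ such bounds yields $p^{k(r-\sum_i 1/d_i)}$, which is useless for $r\ge 2$. What you actually need, and what holds, is that for each fixed prime $p$ and each separable $f_i$ the count $\rho_{f_i}(p^k)$ is bounded uniformly in $k$ (finitely many $p$-adic roots, each contributing $p^{v_p(f_i'(\alpha))}$ residues for large $k$, and no contribution from irreducible $p$-adic factors of degree $\ge 2$ once $k$ is large). Hence $\rho(p^k)=\prod_i\rho_{f_i}(p^k)\ll_p 1$, and your Euler product $\sum_{n_1\mid\eta^\infty}\rho(n_1)/n_1$ converges trivially. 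With this correction your truncation argument is complete.
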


When $r=1$, this is Hooley's theorem. Zehavi \cite{Ze} recently prove the case $r = 2$ without the restriction that
$D_1, D_2$ are coprime.  We outline the idea of proving Theorem 1.3 briefly. Let $\alpha_i$ be a root of $f_i$ for $1\le i\le r$ and $L=\mathbb{Q}(\alpha_1,...,\alpha_r)$. By Weyl's criterion for higher  dimensional sequences, the Theorem 1.2 can be extend to the uniformity of  the sequence
$$\{(\alpha_1(\mathfrak{a})/\mathfrak{N}(\mathfrak{a}),...,
(\alpha_r(\mathfrak{a})/\mathfrak{N}(\mathfrak{a}))\}_
{\mathfrak{a}\in S_{\eta}} \leqno(1.2)$$
for suitable $\eta$. Meanwhile, the one to one correspondence given by  Theorem 1.1 can be extended, and a following equation can be established
$$\{(\alpha_1(\mathfrak{a}),...,\alpha_r(\mathfrak{a})):\mathfrak{a}\in S_{\eta}(n)\}=\{(v_1,...,v_r): f_i(v_1)\equiv 0\mod n\}$$
for positive integers $n$ prime to $\eta D_i$ for $1\le i\le r$. Then from this equation, we can prove Theorem 1.3 by the uniformity of the sequence (1.2). \\

Another topic in this article concerns the digits of $n$-adic expansions of an given algebraic number. It has its analogue in the study of the distribution of digits of $n$-ary expansion of an real number. Before proceeding, we give some backgrounds. For a real number $\gamma$, let $\{\gamma\}$ denote its fractional part. Let
$$\{\gamma\}=\sum_{l=1}^{\infty}\frac{a_l(n)}{n^l},\ 0\le a_l(n)<n
\leqno(1.3)$$
be the $n$-ary expansion of $\{\gamma\}$. Recall that $\gamma$ is normal in base $n$ if, for every positive integer $m$ and each word $\omega\in\{0,1,...,n-1\}^m$,
$$\lim_{N\rightarrow\infty}\frac{|\{l<N:(a_{l+1}(n),...,a_{l+m}(n))
=\omega\}|}{N}=\frac{1}{n^m}.$$
The real number $\gamma$ is called absolutely normal if it is normal in every base $n\ge 2$. Borel \cite{Bo1} showed that almost all real numbers are absolutely normal with respect to Lebesgue measure. Furthermore Borel \cite{Bo2} conjectured that all irrational algebraic real numbers are absolutely normal. It is known that $\gamma$ is normal in base $n$ if and only if the sequence $\{\{n^l\gamma\}\}_{l=1}^{\infty}$ is uniformly distributed.

It is interesting to compare this with a result of Weyl. Let $f(x)\in\mathbb{R}[x]$. Weyl \cite{We} showed that the sequence $\{\{f(n)\}\}_{n=1}^{\infty}$ is uniformly distributed, provided that at least one of the coefficients of nonconstant term of $f(x)$ is irrational. Specially, for any given positive integer $l$ and irrational real number $\gamma$, the sequence $\{\{n^l\gamma\}\}_{n=1}^{\infty}$ is uniformly distributed. Let $a_{l+1}(n)$ be given as in (1.3). Then
$$\{n^l\gamma\}=\frac{a_{l+1}(n)}{n}+O\Big(\frac{1}{n}\Big).$$
So $\{\{n^l\gamma\}\}_{n=1}^{\infty}$ is uniformly distributed if and only if the sequence $\{\frac{a_{l+1}(n)}{n}\}_{n=1}^{\infty}$ is uniformly distributed.

Now we pose two similar questions concerning $n$-adic expansions of an algebraic number. Let $\alpha$ be an irrational algebraic number and $f(x)$ be its primitive minimal polynomial over $\mathbb{Z}$. For an integer $n>1$, let $\mathbb{Z}_n$ denote the $n$-adic completion of $\mathbb{Z}$, i.e., the projective limit
$$\varprojlim_{l} \mathbb{Z}/n^l\mathbb{Z}.$$
We say that $a_0+a_1n+a_2n^2+\cdots\in\mathbb{Z}_n$, where
$0\le a_i<n$ for all $i\ge0$, is an $n$-adic expansion of $\alpha$ if
$$f(a_0+a_1n+a_2n^2+\cdots)=0.$$
For an arbitrary positive integer $n$, $\alpha$ may have none or multiple $n$-adic expansions. Let $\rho_{\alpha}(n)$ be the number of different $n$-adic expansions of $\alpha$. Let
$$\sum_{l=0}^{\infty}a_l(n,m)n^l,\ {\rm where}\
1\le m\le \rho_{\alpha}(n),0\le a_l(m,n)<n,$$
denote all the different $n$-adic expansions of $\alpha$. Arrange the ratios $\frac{a_l(n,m)}{n}$ into a sequence by taking the lexicographical order of $(n,m)$. This sequence is related to the sequence (1.1) with $S'$ given by (b). We will prove the uniformity for $l=0$ as a consequence of Theorem 1.2.

We also consider $n$-adic analogue of normal numbers. An $n$-adic number
$\sum_{l=0}^{\infty}a_ln^l$ is called normal if for any $m>1$ and $\omega\in\{0,...,n-1\}^m$,
$$\lim_{N\rightarrow\infty}\frac{|\{l<N|\omega=(a_{l},...,a_{l+m-1})\}|}
{N}=\frac{1}{n^m}.$$
Similarly, we pose the $n$-adic normal number conjecture:
\begin{conj}
Let $\alpha$ be an irrational algebraic number and $n>1$ be an integer. Then all $n$-adic expansions of $\alpha$, if exist, are normal.
\end{conj}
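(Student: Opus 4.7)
The plan is to reduce Conjecture 1.4 to an equidistribution statement along the chain of ideal residues singled out by a given $n$-adic expansion, and then attempt to attack that statement with the exponential-sum machinery used for Theorem 1.2. First I would invoke the standard Weyl-type criterion for $n$-adic normality: the digit sequence $(a_l)_{l\ge 0}$ is normal in base $n$ if and only if, for every $m\ge 1$ and every $h\in\{1,\ldots,n^m-1\}$,
\[
\lim_{L\to\infty}\frac{1}{L}\sum_{l=0}^{L-1}\exp\bigl(2\pi i h(a_l+a_{l+1}n+\cdots+a_{l+m-1}n^{m-1})/n^m\bigr)=0.
\]
A fixed $n$-adic expansion of $\alpha$ corresponds to an $n$-adic place of $L=\mathbb{Q}(\alpha)$, and hence to a chain of degree-one ideals $\mathfrak{b}_1\supset\mathfrak{b}_2\supset\cdots$ in $\mathcal{O}[1/\eta]$ (for suitable $\eta$) with $\mathfrak{N}(\mathfrak{b}_l)=n^l$. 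With respect to this chain the digits of Theorem 1.1 arrange themselves as
\[
a_l+a_{l+1}n+\cdots+a_{l+m-1}n^{m-1}=\frac{\alpha(\mathfrak{b}_{l+m})-\alpha(\mathfrak{b}_l)}{n^l},
\]
so the conjecture becomes the assertion that the thin sequence $\{\alpha(\mathfrak{b}_L)/n^L\}_{L\ge 1}$ is equidistributed to all scales $n^{-l}$ simultaneously.

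Having made this reduction I would try to push the exponential-sum method of Section 3 into case (c) of the list following Theorem 1.2, namely $S'=\{\mathfrak{b}^k:k\ge 1\}$ for a fixed degree-one $\mathfrak{b}$ above $n$. The intermediate target is the simpler first-digit case, that $\alpha(\mathfrak{b}^L)/n^L$ alone is equidistributed in $[0,1)$. One line of attack is to embed this single-chain sum into an average of the type handled by Theorem 3.8: interpret $\alpha(\mathfrak{b}^L)$ as a truncation of the $\mathfrak{b}$-adic expansion of $\alpha$ in the completion, expand the exponential by character orthogonality in $(\mathbb{Z}/n^L\mathbb{Z})^{*}$, and compare with the sum over \emph{all} degree-one ideals of norm $n^L$. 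Iterating the same comparison over $n^{l+m}$ in place of $n^L$ would, in principle, upgrade equidistribution of ratios to the block-distribution statement required by the conjecture.

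The principal obstacle, and the reason I do not expect the plan to close the gap, is precisely this thinning from the entire family $S_\eta$ to a single chain. Hooley's method, and its ideal-theoretic refinement in Section 3, relies on the $\asymp x/\log x$ degree-one ideals of norm at most $x$ providing independent contributions that average with cancellation; along $\{\mathfrak{b}^L\}$ there is exactly one ideal per allowed norm, and any cancellation must now come from Diophantine properties of $\alpha$ itself rather than from summation. In essentially every natural special case (for instance $\alpha$ a real quadratic irrational and $n$ a completely split prime) Conjecture 1.4 contains Borel's open normality conjecture for real algebraic irrationals, so the techniques of this paper cannot plausibly suffice on their own. A realistic intermediate goal, worth pursuing in lieu of the full conjecture, is equidistribution on average over the $\rho_\alpha(n)$ expansions of $\alpha$, or over the chains produced by the different degree-one primes above $n$; such averaged statements do lie within reach of the methods already developed here.
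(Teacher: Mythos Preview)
This statement is Conjecture~1.4, which the paper does \emph{not} prove; it is posed as an open problem. Your reduction---translating $n$-adic normality into equidistribution of the partial-sum ratios $\alpha(\mathfrak{b}_L)/n^L$ along the single chain of ideal powers determined by the chosen expansion---is exactly what the paper carries out in Lemma~5.1 and Corollary~5.2, culminating in the restatement as Conjecture~5.3. One minor simplification: Lemma~5.1 shows that ordinary uniform distribution of $\{\alpha(\mathfrak{a}^l)/n^l\}_{l\ge 1}$ already encodes normality at \emph{every} block length, so your phrase ``equidistributed to all scales $n^{-l}$ simultaneously'' is not an additional requirement beyond plain equidistribution.

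Your diagnosis of the obstruction is also the paper's implicit position. The Hooley-type method of Section~3 needs an average over many ideals of each norm, whereas case~(c) in the list after Theorem~1.2---precisely $S'=\{\mathfrak{a}^k:k>0\}$---has only one ideal per norm and is singled out there as open. The paper does not attempt your character-orthogonality comparison; instead it supports the conjecture only by proving (Theorem~5.4) that almost all elements of $\mathbb{Z}_n$ are normal with respect to Haar measure, an argument entirely disjoint from the exponential-sum machinery. Your proposed intermediate target of equidistribution on average over the $\rho_\alpha(n)$ expansions is in the spirit of what the paper establishes for the first digit $a_0$ at the end of Section~5 via Theorem~1.2, though the paper does not push that line any further. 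In short, your assessment that the available techniques cannot close Conjecture~1.4 is correct and aligns with the paper's own stance.
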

An equivalent form of this conjecture is given in the last section. To support this conjecture, we show that almost all elements in $\mathbb{Z}_n$ are n-adically normal with respect to the Haar measure.

%

\section{Preliminaries}
We list some basic facts in algebraic number theory which will be used and prove Theorem 1.1 in this section. First we introduce the Dedekind factorization theorem.

\begin{lem}\cite{Ne}
Let $\mathcal{A}$ be a Dedekind domain and $F$ be its fraction field. Let $K$ be a finite separable extension of $F$ and $\mathcal{B}$ be the integral closure of $\mathcal{A}$ in $K$. Suppose that $\alpha\in\mathcal{B}$ satisfying $K=F(\alpha)$ and $f_{\alpha}(x)$ is its monic minimal polynomial over $F$. Denote by $\mathfrak{f}$ the conductor of $\mathcal{A}[\alpha]$ in $\mathfrak{B}$. For a prime ideal $\mathfrak{p}$ of $\mathcal{A}$ which is relatively prime to $\mathfrak{f}$, let
$$\bar{f}(x)=\bar{f}_1(x)^{e_1}\cdots \bar{f}_r(x)^{e_r}$$
be the factorization of $\bar{f}(x)=f_{\alpha}(x) \mod\mathfrak{p}$ into irreducibles  $\bar{f}_i(x)=f_i(x)\mod\mathfrak{p}$ over $\mathcal{A}/\mathfrak{p}$. Then
$$\mathfrak{P}_i=\mathfrak{p}\mathcal{B}+f_i(\alpha)\mathcal{B},
\ i=1,...,r$$
be different prime ideals of $\mathcal{B}$ above $\mathfrak{p}$. The inertia degree of $\mathfrak{P}_i$ is the degree of $f_i(x)$ and one has
$$\mathfrak{p}=\mathfrak{P}_1^{e_1}\cdots\mathfrak{P}_r^{e_r}.$$
\end{lem}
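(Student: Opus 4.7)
The plan is to use localization at $\mathfrak{p}$ together with the coprimality of $\mathfrak{p}$ and the conductor $\mathfrak{f}$ to reduce everything to a ring-theoretic decomposition of $\mathcal{B}/\mathfrak{p}\mathcal{B}$, then read off the prime factorization of $\mathfrak{p}\mathcal{B}$ from that decomposition. The initial and key step is to show that the natural inclusion $\mathcal{A}[\alpha]\subset\mathcal{B}$ induces an isomorphism
$$\mathcal{A}[\alpha]/\mathfrak{p}\mathcal{A}[\alpha]\stackrel{\sim}{\longrightarrow}\mathcal{B}/\mathfrak{p}\mathcal{B}.$$
Setting $S=\mathcal{A}\setminus\mathfrak{p}$, the hypothesis that $\mathfrak{p}$ is coprime to $\mathfrak{f}$ forces $S^{-1}\mathfrak{f}=\mathcal{B}_{\mathfrak{p}}$; by the defining property of the conductor this gives $\mathcal{B}_{\mathfrak{p}}\subset\mathcal{A}[\alpha]_{\mathfrak{p}}$, hence $\mathcal{A}[\alpha]_{\mathfrak{p}}=\mathcal{B}_{\mathfrak{p}}$. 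Since every associated prime of $\mathcal{B}/\mathfrak{p}\mathcal{B}$ lies above $\mathfrak{p}$, the localization map $\mathcal{B}/\mathfrak{p}\mathcal{B}\to\mathcal{B}_{\mathfrak{p}}/\mathfrak{p}\mathcal{B}_{\mathfrak{p}}$ is an isomorphism, and likewise for $\mathcal{A}[\alpha]$, which yields the displayed isomorphism.

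Next, because $\mathcal{A}$ is Dedekind, hence integrally closed, $f_{\alpha}(x)\in\mathcal{A}[x]$ and $\mathcal{A}[\alpha]\cong\mathcal{A}[x]/(f_{\alpha})$; reducing modulo $\mathfrak{p}$ gives
$$\mathcal{B}/\mathfrak{p}\mathcal{B}\cong(\mathcal{A}/\mathfrak{p})[x]/\bigl(\bar{f}(x)\bigr).$$
Because the $\bar{f}_i$ are pairwise coprime distinct monic irreducibles over the field $\mathcal{A}/\mathfrak{p}$, the Chinese Remainder Theorem splits this as
$$\mathcal{B}/\mathfrak{p}\mathcal{B}\cong\prod_{i=1}^{r}(\mathcal{A}/\mathfrak{p})[x]/\bigl(\bar{f}_i^{\,e_i}\bigr),$$
and each factor is an Artinian local ring whose unique maximal ideal is generated by the class of $\bar{f}_i$ and whose residue field $(\mathcal{A}/\mathfrak{p})[x]/(\bar{f}_i)$ has degree $\deg f_i$ over $\mathcal{A}/\mathfrak{p}$.

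Finally, I would read off the conclusion from this product decomposition. The maximal ideals of $\mathcal{B}/\mathfrak{p}\mathcal{B}$ correspond bijectively to the primes of $\mathcal{B}$ lying above $\mathfrak{p}$, so the $r$ factors index exactly these primes. Pulling back the maximal ideal of the $i$-th factor, which is generated by the class of $\bar{f}_i$, yields $\mathfrak{P}_i=\mathfrak{p}\mathcal{B}+f_i(\alpha)\mathcal{B}$, and the residue-field identification gives inertia degree $\deg f_i$. To extract the exponent, localize at $\mathfrak{P}_i$: in the Artinian local ring $(\mathcal{A}/\mathfrak{p})[x]/(\bar{f}_i^{\,e_i})$ one has $\bar{f}_i^{\,e_i}=0$ but $\bar{f}_i^{\,e_i-1}\neq 0$, so in the DVR $\mathcal{B}_{\mathfrak{P}_i}$ the ideal $\mathfrak{p}\mathcal{B}_{\mathfrak{P}_i}$ has exact $\mathfrak{P}_i$-adic valuation $e_i$; since $\mathfrak{p}\mathcal{B}$ is supported precisely on $\{\mathfrak{P}_1,\ldots,\mathfrak{P}_r\}$, unique factorization in $\mathcal{B}$ produces $\mathfrak{p}\mathcal{B}=\mathfrak{P}_1^{e_1}\cdots\mathfrak{P}_r^{e_r}$. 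I expect the delicate point to be the conductor-based reduction in the first paragraph: without it one cannot replace $\mathcal{B}$ by a monogenic polynomial algebra and the factorization of $\bar{f}$ would carry no information about $\mathfrak{p}\mathcal{B}$; once the quotient isomorphism is in hand, the rest is standard CRT and local DVR accounting.
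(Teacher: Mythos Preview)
Your proof is correct and is essentially the standard Dedekind--Kummer argument: use the conductor hypothesis to identify $\mathcal{A}[\alpha]$ and $\mathcal{B}$ after localizing at $\mathfrak{p}$, rewrite $\mathcal{B}/\mathfrak{p}\mathcal{B}$ as $(\mathcal{A}/\mathfrak{p})[x]/(\bar f)$, split by CRT, and read off primes, residue degrees, and ramification from the local Artinian factors. Note, however, that the paper does not give its own proof of this lemma at all---it is quoted directly from Neukirch \cite{Ne} (Chapter~I, Proposition~8.3), and the argument there is the same one you have written.
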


Recall that the conductor of $\mathcal{A}[\alpha]$ in $\mathcal{B}$ equals $\{\beta\in\mathcal{A}[\alpha]: \beta\mathcal B\in\mathcal{A}[\alpha]\}$. It is also the greatest ideal of $\mathcal{B}$ contained in $\mathcal{A}[\alpha]$. Let $k:=[K:F]$ and $D_{\alpha}$ be the discriminant of $1,\alpha,...,\alpha^{k-1}$ with respect to $K/F$. Then by Chapter 1, section 2, Lemma 2.9 of \cite{Ne}, $D_{\alpha}\mathcal{B}\subset\mathcal{A}[\alpha]$. So $D_{\alpha}$ is in the conductor and any ideal of $\mathcal{B}$ prime to $D_{\alpha}$ is prime to the conductor. We may use $D_{\alpha}$ instead of the conductor in the Lemma 2.1. Let $\mathcal{A}=\mathbb{Z}[\frac{1}{\eta}]$ and $\mathcal{B}=\mathcal{O}[\frac{1}{\eta}]$. Dedekind's theorem implies that there is a one to one correspond between degree one prime ideals above $p$ and roots of $f_{\alpha}(x)\mod p$ for each $p\nmid \eta D_f$. Our Theorem 1.1 generalize this fact to positive integers $n$ coprime to $\eta D_f$.

\begin{lem}\cite{Hi}
Let $K_1,K_2$ be two number fields whose discriminants are relatively prime and $K_1K_2$ be the composition of this two field. Then
$$[K_1K_2:\mathbb{Q}]=[K_1:\mathbb{Q}][K_2:\mathbb{Q}]$$
and
$$D_{K_1K_2}=D_{K_1}^{[K_2:\mathbb{Q}]}D_{K_2}^{[K_1:\mathbb{Q}]}$$
where $D_{K_1},D_{K_2}$ and $D_{K_1K_2}$ represent discriminants of $K_1, K_2$ and $K_1K_2$ respectively.
\end{lem}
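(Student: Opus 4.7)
The plan is to split the argument into the degree equality and the discriminant formula, treating the former as the harder step. Writing $n_i=[K_i:\mathbb{Q}]$ and $\mathcal{O}_i$ for the ring of integers of $K_i$, I would first show $K_1\cap K_2=\mathbb{Q}$: any rational prime $p$ ramifying in $K_1\cap K_2$ would have to ramify in both $K_1$ and $K_2$ by the multiplicativity of ramification indices in towers, contradicting the coprimeness of $D_{K_1}$ and $D_{K_2}$, and by Minkowski's bound the only number field unramified at every finite prime is $\mathbb{Q}$. However, $K_1\cap K_2=\mathbb{Q}$ alone does not force $[K_1K_2:\mathbb{Q}]=n_1n_2$ when neither extension is Galois, so I would pass to the Galois closure $L$ of $K_1/\mathbb{Q}$. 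Since the primes ramifying in $L$ are exactly those ramifying in $K_1$, the discriminants $D_L$ and $D_{K_2}$ still have no common prime divisor, and the preceding reasoning yields $L\cap K_2=\mathbb{Q}$. As $L/\mathbb{Q}$ is Galois, this gives $[LK_2:\mathbb{Q}]=[L:\mathbb{Q}][K_2:\mathbb{Q}]$, and combined with $[LK_2:K_1K_2]\le[L:K_1]$ one gets
\[
[K_1K_2:\mathbb{Q}]\ge\frac{[L:\mathbb{Q}][K_2:\mathbb{Q}]}{[L:K_1]}=n_1n_2,
\]
which together with the trivial opposite bound establishes the degree equality.

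For the discriminant formula I would use integral bases directly. Let $\{\omega_i\}$ and $\{\mu_j\}$ be $\mathbb{Z}$-bases of $\mathcal{O}_1$ and $\mathcal{O}_2$; by the degree equality the $n_1n_2$ products $\omega_i\mu_j$ are $\mathbb{Q}$-linearly independent and form a $\mathbb{Z}$-basis of the order $\mathcal{O}_1\mathcal{O}_2\subset\mathcal{O}_{K_1K_2}$. Since each embedding $K_1K_2\hookrightarrow\bar{\mathbb{Q}}$ is determined by its restrictions to $K_1$ and $K_2$, a simple counting argument shows these embeddings are in bijection with pairs $(\sigma,\tau)$ of embeddings of $K_1$ and $K_2$, so the matrix $(\sigma(\omega_i)\tau(\mu_j))$ is a Kronecker product and a standard determinant identity yields $\mathrm{disc}(\omega_i\mu_j)=D_{K_1}^{n_2}D_{K_2}^{n_1}$. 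Consequently
\[
[\mathcal{O}_{K_1K_2}:\mathcal{O}_1\mathcal{O}_2]^2\cdot D_{K_1K_2}=D_{K_1}^{n_2}D_{K_2}^{n_1},
\]
and the claimed formula will follow once I verify $\mathcal{O}_1\mathcal{O}_2=\mathcal{O}_{K_1K_2}$.

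To establish this last equality I would localize at an arbitrary rational prime $p$. By coprimeness of the discriminants, at least one of $\mathcal{O}_i\otimes\mathbb{Z}_{(p)}$, say that for $i=2$, has unit discriminant and is therefore étale over $\mathbb{Z}_{(p)}$. The natural surjection $\mathcal{O}_1\otimes_{\mathbb{Z}}\mathcal{O}_2\to\mathcal{O}_1\mathcal{O}_2$ sends the $\mathbb{Z}$-basis $\{\omega_i\otimes\mu_j\}$ onto $\{\omega_i\mu_j\}$, so it is an isomorphism; base-changing, $\mathcal{O}_1\mathcal{O}_2\otimes\mathbb{Z}_{(p)}$ is étale over the semilocal Dedekind domain $\mathcal{O}_1\otimes\mathbb{Z}_{(p)}$. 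An étale extension of a Dedekind domain that remains a domain is itself Dedekind, hence integrally closed in its fraction field $K_1K_2$, so $\mathcal{O}_1\mathcal{O}_2\otimes\mathbb{Z}_{(p)}=\mathcal{O}_{K_1K_2}\otimes\mathbb{Z}_{(p)}$; ranging over all $p$ gives $\mathcal{O}_1\mathcal{O}_2=\mathcal{O}_{K_1K_2}$, completing the proof.

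The main obstacle is the degree equality in the non-Galois case: the condition $K_1\cap K_2=\mathbb{Q}$ does not by itself imply $[K_1K_2:\mathbb{Q}]=n_1n_2$, and the detour through the Galois closure together with the fact that Galois closures preserve the set of ramified primes is what closes that gap.
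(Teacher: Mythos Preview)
Your proof is correct and complete. Note, however, that the paper does not prove this lemma at all: it is stated with a citation to Hilbert's \emph{Zahlbericht} and used as a black box, so there is no ``paper's own proof'' to compare against. Your argument supplies what the paper omits, and each step is sound: the passage to the Galois closure $L$ of $K_1$ (using that ramified primes are unchanged) is exactly the right way to upgrade $K_1\cap K_2=\mathbb{Q}$ to the full degree equality, the Kronecker-product computation of $\mathrm{disc}(\omega_i\mu_j)$ is standard once the degree equality guarantees that embeddings of $K_1K_2$ biject with pairs $(\sigma,\tau)$, and the localization argument for $\mathcal{O}_1\mathcal{O}_2=\mathcal{O}_{K_1K_2}$ is valid (étale over regular is regular, hence normal). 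One stylistic remark: the étale language is heavier than strictly necessary here---the same conclusion follows from the classical fact that the relative different of $\mathcal{O}_1\mathcal{O}_2/\mathcal{O}_1\otimes\mathbb{Z}_{(p)}$ is trivial when $p\nmid D_{K_2}$, or from the inclusion $D_{K_2}\mathcal{O}_{K_1K_2}\subset\mathcal{O}_1[\mu_1,\dots,\mu_{n_2}]$ combined with its symmetric counterpart---but your formulation is perfectly rigorous.
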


We also need Landau's Prime Ideal Theorem and the Wiener-Ikehara Tauberian Theorem for Dirichlet L-series (cf. \cite{MV} $\S8.3$).
\begin{lem}\cite{La} For any number field,
$$\sum_{\mathfrak{N}(\mathfrak{p})\le x}1=\frac{x}{\log x}
+O(\frac{x}{\log^2x}).$$
\end{lem}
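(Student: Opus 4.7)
The plan is to follow the classical Hadamard--de la Vallée Poussin route adapted to number fields, working with the Dedekind zeta function
$$\zeta_K(s) = \sum_{\mathfrak{a}} \mathfrak{N}(\mathfrak{a})^{-s} = \prod_{\mathfrak{p}} \bigl(1-\mathfrak{N}(\mathfrak{p})^{-s}\bigr)^{-1}, \quad \Re(s) > 1.$$
The first step is to establish the essential analytic properties: $\zeta_K(s)$ admits a meromorphic continuation past $\Re(s) = 1$ with a unique simple pole at $s=1$. This is obtained by splitting the ideal sum according to ideal classes and applying the Dirichlet--Minkowski technique of counting lattice points of bounded norm in the Minkowski embedding (the residue at $s=1$ is not needed here; only the pole and the analyticity of $\zeta_K(s) - c/(s-1)$ in a neighborhood of the line $\Re(s)=1$).

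The heart of the matter is the nonvanishing $\zeta_K(1+it) \ne 0$ for all real $t$. I would use the classical positivity trick: since $\log \zeta_K(s) = \sum_{\mathfrak{p},m} m^{-1} \mathfrak{N}(\mathfrak{p})^{-ms}$ has nonnegative Dirichlet coefficients, the elementary inequality $3 + 4\cos\theta + \cos 2\theta \ge 0$ yields
$$\bigl|\zeta_K(\sigma)^3 \zeta_K(\sigma+it)^4 \zeta_K(\sigma+2it)\bigr| \ge 1 \quad (\sigma>1).$$
A zero at $1+it_0$ would force the left side to vanish to positive order as $\sigma \to 1^+$, contradicting the simple-pole behavior of $\zeta_K$ at $s=1$.

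With nonvanishing on $\Re(s)=1$ in hand, I would apply the Wiener--Ikehara Tauberian theorem quoted in the paper to
$$-\frac{\zeta_K'(s)}{\zeta_K(s)} = \sum_{\mathfrak{a}} \Lambda_K(\mathfrak{a}) \mathfrak{N}(\mathfrak{a})^{-s},$$
where $\Lambda_K(\mathfrak{p}^m) = \log \mathfrak{N}(\mathfrak{p})$ and $\Lambda_K(\mathfrak{a}) = 0$ otherwise. This gives $\psi_K(x) := \sum_{\mathfrak{N}(\mathfrak{a})\le x} \Lambda_K(\mathfrak{a}) = x + o(x)$. The contribution of prime powers $\mathfrak{p}^m$ with $m\ge 2$ is $O(\sqrt{x}\log x)$, since $\mathfrak{N}(\mathfrak{p}) \le x^{1/m}$ and at most $[K:\mathbb{Q}]$ prime ideals lie above each rational prime. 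Hence $\vartheta_K(x) := \sum_{\mathfrak{N}(\mathfrak{p})\le x} \log\mathfrak{N}(\mathfrak{p}) = x + o(x)$, and partial summation converts this to $\pi_K(x) = x/\log x + o(x/\log x)$.

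The main obstacle is obtaining the sharper error $O(x/\log^2 x)$: Wiener--Ikehara alone gives no power-of-log saving. I would therefore upgrade to a de la Vallée Poussin zero-free region $\sigma > 1 - c/\log(|t|+2)$, proved by quantifying the $3$-$4$-$1$ inequality above together with standard growth and convexity bounds for $\zeta_K'/\zeta_K$ in vertical strips (since $K$ is fixed, the implied constants are harmless). A contour shift in the Perron-type integral representation of $\psi_K(x)$ then yields $\psi_K(x) = x + O(x \exp(-c\sqrt{\log x}))$, and partial summation with $\operatorname{Li}(x) = x/\log x + O(x/\log^2 x)$ gives the claimed bound.
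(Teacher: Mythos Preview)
The paper does not prove this lemma at all: it is simply quoted from Landau's 1903 paper \cite{La} as a known input, with no argument given. So there is nothing in the paper to compare your proof against.

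That said, your outline is the standard proof and is essentially correct. A few remarks. First, your invocation of the Wiener--Ikehara theorem (the paper's Lemma~2.4) is a slight detour: as you yourself note, it only yields $\pi_K(x)\sim x/\log x$ with no quantitative error, so that paragraph could be dropped entirely in favor of going straight to the zero-free region and the explicit Perron formula. Second, the step ``standard growth and convexity bounds for $\zeta_K'/\zeta_K$ in vertical strips'' is doing real work here and deserves a reference or a line of justification (e.g.\ via the functional equation and Phragm\'en--Lindel\"of, or the Hadamard product for $\zeta_K$); for a fixed field $K$ this is indeed routine, but it is where the constants hide. With those caveats, the de~la~Vall\'ee~Poussin zero-free region $\sigma>1-c/\log(|t|+2)$ plus a contour shift gives $\psi_K(x)=x+O(x\exp(-c'\sqrt{\log x}))$, and partial summation then delivers $\pi_K(x)=\operatorname{Li}(x)+O(x\exp(-c''\sqrt{\log x}))=x/\log x+O(x/\log^2 x)$ as claimed. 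This is precisely Landau's original route.
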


\begin{lem}
Let $F(s)=\sum_{n=1}^{\infty}\frac{a_n}{n^s}$ where $a_n$ are nonnegative real numbers for all $n$. If there is a $c\ge 0$ such that $F(s)-c/(s-1)$ converges to an analytic function in $\Re(s)\ge 1$, then
$$\sum_{n\le x}a_n=cx+o(x).$$
\end{lem}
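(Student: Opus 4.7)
The plan is to prove the lemma by following the Newman--Zagier streamlined form of the Wiener--Ikehara argument. Set $A(x)=\sum_{n\le x}a_n$. Partial summation gives the Mellin representation $F(s)=s\int_1^\infty A(x)\,x^{-s-1}\,dx$ for $\Re(s)>1$, and a direct computation gives $cs/(s-1)=s\int_1^\infty cx\cdot x^{-s-1}\,dx$. Since $F(s)-cs/(s-1)$ differs from $F(s)-c/(s-1)$ only by the constant $-c$, it too extends analytically to $\Re(s)\ge 1$, and equals $s\int_1^\infty(A(x)-cx)\,x^{-s-1}\,dx$. After substituting $x=e^t$ and setting $\phi(t):=e^{-t}A(e^t)-c$, we conclude that the Laplace transform
\[
g(z):=\int_0^\infty \phi(t)\,e^{-zt}\,dt\qquad(\Re(z)>0)
\]
extends analytically to an open neighborhood of the closed half-plane $\Re(z)\ge 0$.

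Before the main analytic input can be applied, I would establish $A(x)=O(x)$, so that $\phi$ is bounded. This is the main obstacle, and it is where the hypothesis $a_n\ge 0$ is crucial. The cleanest route is to smooth $F(s)-c/(s-1)$ against a Fejér-type kernel on a short vertical segment through $s=1$ and read off the $O(x)$ bound for $A(x)$ from the resulting real-variable identity; alternatively, one can begin with the crude estimate $A(x)\le x^{1+1/\log x}F(1+1/\log x)\ll x\log x$ and iteratively sharpen it using analyticity of $F(s)-c/(s-1)$ on the line $\Re(s)=1$. Once $\phi$ is bounded, Newman's analytic theorem applies: for a bounded, locally integrable function on $[0,\infty)$ whose Laplace transform extends analytically across the imaginary axis, the improper integral $\int_0^\infty\phi(t)\,dt=g(0)$ converges. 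Newman's theorem itself is proved by a short contour integration of $g_T(z)\,e^{zT}(1+z^2/R^2)/z$ around a rectangle $\{|\Re z|\le\delta,\ |\Im z|\le R\}$; the kernel $1+z^2/R^2$ vanishes on the horizontal edges, cancelling the trivial estimate for the tail $g-g_T$ on the right half-plane.

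Finally, monotonicity of $A$ upgrades convergence of $\int_0^\infty\phi$ to the pointwise statement $\phi(t)\to 0$. If $\phi(t_0)\ge\varepsilon$ for arbitrarily large $t_0$, then for $t\in[t_0,t_0+h]$ monotonicity of $A$ forces $A(e^t)\ge A(e^{t_0})\ge(c+\varepsilon)e^{t_0}$, whence $\phi(t)\ge(c+\varepsilon)e^{t_0-t}-c\ge\varepsilon/2$ as long as $h$ is small enough depending only on $c$ and $\varepsilon$. This yields a positive lower bound for the integral of $\phi$ over infinitely many disjoint intervals of fixed length, contradicting the Cauchy criterion for the improper integral. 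The symmetric argument rules out $\liminf\phi<0$, and together they give $\phi(t)\to 0$, i.e.\ $A(x)\sim cx$, which is the claim.
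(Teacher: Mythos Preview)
The paper does not prove this lemma at all: it is quoted as the Wiener--Ikehara theorem with a reference to Montgomery--Vaughan, \S8.3. So there is no ``paper's own proof'' to compare against; any correct argument is acceptable.

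Your Newman--Zagier outline is the right shape once the boundedness of $\phi$ is in hand, and the final monotonicity step is fine. The genuine gap is exactly where you flag it: deducing $A(x)=O(x)$ from the hypotheses. Neither of your two proposed routes really closes it. The Fej\'er-kernel smoothing on a vertical segment through $s=1$ \emph{is} the classical Wiener--Ikehara argument; if you carry it out you already obtain $A(x)=cx+o(x)$ directly, so invoking Newman afterwards is redundant rather than a simplification. The second route, bootstrapping from $A(x)\ll x\log x$, does not iterate: that bound only gives $\phi(t)\ll t$, and nothing in Newman's contour lemma converts linear growth of $\phi$ into boundedness. In the Newman--Zagier proof of the prime number theorem the bound $\psi(x)=O(x)$ comes from Chebyshev's elementary inequalities, an input specific to primes with no analogue for a general nonnegative Dirichlet series. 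Under the bare hypotheses of the lemma, obtaining $A(x)=O(x)$ is essentially as deep as the full conclusion.

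Two practical remarks. First, your setup tacitly upgrades ``analytic in $\Re(s)\ge 1$'' to ``analytic in an open neighbourhood of that half-plane''; Newman's contour argument needs the open neighbourhood, whereas the classical Wiener--Ikehara theorem requires only continuous extension to the line. For the paper's purposes this is harmless, since in the application (Lemma~3.3) the relevant series continues meromorphically to $\Re(s)>1/2$. Second, in that same application the coefficients satisfy $\rho(n)\le l^{\omega(n)}$, so the $O(x)$ bound is available by a direct elementary estimate; if you only need the lemma for that case, you can supply $A(x)=O(x)$ externally and then your Newman argument goes through cleanly.
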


We now prove Theorem 1.1.

{\it Proof of Theorem \ref{thm1}.}
We begin with an easy case: $\mathfrak{a}=\mathfrak{p}^k$, where $\mathfrak{p}$ is a prime ideal of $\mathcal{O}[\frac{1}{\eta}]$ and $p$ is the prime number with $p\mathbb{Z}[\frac{1}{\eta}]=\mathfrak{p}\cap\mathbb{Z}[\frac{1}{\eta}]$. Then $p^k\in\mathfrak{p}^k\cap\mathbb{Z}[\frac{1}{\eta}]$ and so
$p^k\mathbb{Z}[\frac{1}{\eta}]\subset\mathfrak{p}^k
\cap\mathbb{Z}[\frac{1}{\eta}]$. Therefore
$$|\mathcal{O}[1/\eta]/\mathfrak{p}^k|
=|\mathcal{O}[1/\eta]/\mathfrak{p}|^k\ge p^k
=|\mathbb{Z}[1/\eta]/p^k\mathbb{Z}[1/\eta]|
\ge |\mathbb{Z}[1/\eta]/(\mathfrak{p}^k\cap\mathbb{Z}[1/\eta])|.$$
It is known that $|\mathcal{O}[\frac{1}{\eta}]/\mathfrak{p}|^k=p^k$ if and only if $\mathfrak{p}$ is of degree one, and
$$\mathbb{Z}[1/\eta]/p^k\mathbb{Z}[1/\eta]\cong
\mathbb{Z}[1/\eta]/(\mathfrak{p}^k\cap\mathbb{Z}[1/\eta])$$
if and only if $k=1$ or $\mathfrak{p}$ is unramified.

For $\mathfrak{a}=\mathfrak{p}_1^{e_1}\cdots\mathfrak{p}_r^{e_r}$, we have
$$|\mathcal{O}[1/\eta]/\mathfrak{a}|=\prod_{i=1}^{r}
|\mathcal{O}[1/\eta]/\mathfrak{p}_i^{e_i}|\ge\prod_{i=1}^{r}
|\mathbb{Z}[1/\eta]/(\mathfrak{p}_i^{e_i}\cap\mathbb{Z}[1/\eta])|
\ge|\mathbb{Z}[1/\eta]/(\mathfrak{a}\cap\mathbb{Z}[1/\eta])|.$$
On the one hand,
$$\prod_{i=1}^{r}|\mathcal{O}[1/\eta]/\mathfrak{p}_i^{e_i}|
=\prod_{i=1}^{r}|\mathbb{Z}[1/\eta]/(\mathfrak{p}_i^{e_i}\cap\mathbb{Z})|$$
if and only if
$$|\mathcal{O}[1/\eta]/\mathfrak{p}_i^{e_i}|
=|\mathbb{Z}[1/\eta]/(\mathfrak{p}_i^{e_i}\cap\mathbb{Z})|$$
for all $1\le i\le r$, if and only if (i) is true by above discussion on the case $\mathfrak{a}=\mathfrak{p}^k$. On the other hand,
$$\prod_{i=1}^{r}|\mathbb{Z}[1/\eta]/(\mathfrak{p}_i^{e_i}
\cap\mathbb{Z}[1/\eta])|
=|\mathbb{Z}[1/\eta]/(\mathfrak{a}\cap\mathbb{Z}[1/\eta])|$$
if and only if
$$\prod_{i=1}^{r}(\mathfrak{p}_i^{e_i}\cap\mathbb{Z})[1/\eta]|
=(\prod_{i=1}^{r}\mathfrak{p}_i^{e_i}\cap\mathbb{Z}[1/\eta]).$$
Claim that the later equation happens if and only if (ii) is true. Indeed,
$$(\prod_{i=1}^r \mathfrak{p}_i^{e_i})\cap\mathbb{Z}[1/\eta]
=(\bigcap_{i=1}^r \mathfrak{p}_i^{e_i})\cap\mathbb{Z}[1/\eta]
=\bigcap_{i=1}^r (\mathfrak{p}_i^{e_i}\cap\mathbb{Z}[1/\eta])
\supset\prod_{i=1}^r (\mathfrak{p}_i^{e_i}\cap\mathbb{Z}[1/\eta]),$$
and the equal happens if and only if $(\mathfrak{p}_i^{e_i}\cap\mathbb{Z}[\frac{1}{\eta}])$ are pairwisely coprime, equivalently, (ii) is true. Hence we conclude that $\mathfrak{a}$ is an degree one ideal if and only if both (i) and (ii) are true. \\

It is left to prove the furthermore part of Theormen 1.1. Let $\mathfrak{a}$ be an ideal of inertia degree one with $\mathfrak{N}(\mathfrak{a})=n$. Set $v=\alpha(\mathfrak{a})$. Then $v-\alpha\in\mathfrak{a}$ and so $$\frac{f(v)}{c}=\prod_{\sigma}(v-\sigma(\alpha))\in\mathfrak{a},$$
where $c$ is the leading coefficient of $f(x)$ and $\sigma$ runs through the embedding of $L$ to $\mathbb{C}$. Since $f(x)/c\in\mathbb{Z}[\frac{1}{\eta}][x]$, $c$ is an unite in $\mathbb{Z}[\frac{1}{\eta}]$. It follows from
$$\prod_{\sigma}(v-\sigma(\alpha))\in\mathfrak{a}\cap\mathbb{Z}[1/\eta]
=\mathfrak{N}(\mathfrak{a})\mathbb{Z}[1/\eta]$$
that $f(v)\equiv 0\mod n$. That is, $\alpha(\mathfrak{a})$ is a root of $f(x)\equiv0\mod n$. So we have the map $\mathfrak{a}\mapsto v=\alpha(\mathfrak{a})$.

Conversely let $(n,\eta D_f)=1$ and $v$ is a root of $f(x)\equiv0\mod n$. Since the leading coefficient $c$ of $f(x)$ is a unite in $\mathbb{Z}[\frac{1}{\eta}]$, $v$ is a root of the congruence
$$f(x)/c\equiv 0\mod n\mathbb{Z}[1/\eta].$$
Let $d:=\deg f$ and $D_{\alpha}$ be the discriminant of $1,\alpha,...,\alpha^{d-1}$. Since $D_f$ equals the product of $D_{\alpha}$ and a power of $c$, $D_f$ and $D_{\alpha}$ generate the same ideal in $\mathbb{Z}[\frac{1}{\eta}]$. Write  $n=p_1^{e_1}\cdots p_r^{e_r}$. Then $p_i\nmid D_{\alpha}$ in $\mathbb{Z}[\frac{1}{\eta}]$. Take $\mathcal{A}=\mathbb{Z}[\frac{1}{\eta}], \mathcal{B}=\mathcal{O}[\frac{1}{\eta}]$ and $f_{\alpha}=f/c$ in Lemma 2.1. Applying the Dedekinds factorization theorem to each $p_i\mathbb{Z}[\frac{1}{\eta}]$, we obtain that $(\alpha-v,p_i)$ is a prime ideal of degree one, where $(\alpha-v,p_i)$ is the ideal of $\mathcal{O}[\frac{1}{\eta}]$ generated by $\alpha-v$ and $p_i$.

Denote by $\mathfrak{p}_i=(\alpha-v,p_i)$. Note that any prime ideal other than $\mathfrak{p}_i$ above $p_i$ does not divide $\alpha-v$. In fact, if $\mathfrak{q}|p_i$ is a prime ideal different from $\mathfrak{p}_i$ that  divides $(\alpha-v)$, then $\mathfrak{q}\mathfrak{p_i}|(\alpha-v,p_i)=\mathfrak{p}_i$, a contradiction.
So $(\alpha-v)=\mathfrak{p}_i^t\mathfrak{b}$ for some positive integer $t$ and ideal $\mathfrak{b}$ with $\mathfrak{b}+p_i\mathcal{O}[\frac{1}{\eta}]=\mathcal{O}[\frac{1}{\eta}]$, and hence
$$(\alpha-v, p_i^{e_i})=\mathfrak{p}_i^{\min\{e_i,t\}}.$$
We show $t\ge e_i$. Since $f(v)\equiv 0\mod p_i^{e_i}$, it follows that $p_i^{e_i}$ divides $\mathfrak{N}(\alpha-v)$ in the ring $\mathbb{Z}[\frac{1}{\eta}]$. Together with $(\mathfrak{N}(\mathfrak{b}),p_i)=1$, this gives us $p_i^{e_i}|\mathfrak{N}(\mathfrak{p}_i^t)=p^t$. Thus $t\ge e_i$.
Therefore $(\alpha-v, p_i^{e_i})=\mathfrak{p}_i^{e_i}$.

Since $(\alpha-v,n)\subset(\alpha-v,p_i^{e_i})$ for each $i$ with $1\le i\le r$, one derives that
$$(\alpha-v,n)\subset(\alpha-v,p_1^{e_1})\cdots(\alpha-v,p_r^{e_r}).$$
By comparing the generators of ideals of each side, it is easy to see that $$(\alpha-v,p_1^{e_1})\cdots(\alpha-v,p_r^{e_r})\subset(\alpha-v,n).$$
So the two ideals are equal. Since the product $(\alpha-v,p_1^{e_1})\cdots(\alpha-v,p_r^{e_r})$ satisfies both (i) and (ii) of Theorem 1.1, we conclude that $(\alpha-v,n)$ is an ideal of degree one with norm $n$. Hence we set up the map $v\mapsto\mathfrak{a}=(\alpha-v,n)$ and it is the inverse of the map $\mathfrak{a}\mapsto v=\alpha(\mathfrak{a})$. This proves the furthermore part and ends the proof of Theorem 1.1. \hfill$\Box$ \\

We end this section with some discussion about $S_{\eta}(n)$. It is easy to give the structure of $S_{\eta}(n)$ by Theorem 1.1.

\begin{cor}
One has
$$S_{\eta}(p^e)=\{\mathfrak{p}^e: \mathfrak{p}\in S_{\eta}(p)\
\rm{is\ unramified}\},\ \forall e\ge 2.$$
For $n=p_1^{e_1}\cdots p_r^{e_r}$,
$$S_{\eta}(n)=S_{\eta}(p_1^{e_1})\times\cdots\times S_{\eta}(p_r^{e_r})
=\{\mathfrak{p}_1^{e_1}\cdots\mathfrak{p}_r^{e_r}
: \mathfrak{p}_i\in S_{\eta}(p_i^{e_i}), 1\le i\le r\}.$$
\end{cor}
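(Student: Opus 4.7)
The plan is to deduce the corollary directly from the characterization of degree one ideals given in Theorem 1.1, together with the multiplicativity of the absolute norm. The central observation is that conditions (i) and (ii) of Theorem 1.1 are both local with respect to the rational primes lying below the prime factors of $\mathfrak{a}$, while (ii) additionally forces at most one prime factor above each rational prime.

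For the first identity, I would fix $e \ge 2$ and take $\mathfrak{a} \in S_\eta(p^e)$. Writing $\mathfrak{a} = \mathfrak{p}_1^{f_1}\cdots\mathfrak{p}_s^{f_s}$, multiplicativity of the norm gives $\prod \mathfrak{N}(\mathfrak{p}_i)^{f_i} = p^e$, so each $\mathfrak{N}(\mathfrak{p}_i)$ is a power of $p$. Condition (ii) of Theorem 1.1 requires these norms to be pairwise coprime, forcing $s=1$ and $\mathfrak{a}=\mathfrak{p}^{f}$. Condition (i) (applied to degree one) then gives $\mathfrak{N}(\mathfrak{p})=p$, so $f=e$, and since $e\ge 2$, condition (i) further forces $\mathfrak{p}$ to be unramified. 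Conversely, for any unramified $\mathfrak{p}\in S_\eta(p)$, the ideal $\mathfrak{p}^e$ trivially satisfies (i) and (ii), and its norm is $p^e$, so $\mathfrak{p}^e\in S_\eta(p^e)$.

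For the second identity, given $\mathfrak{a}\in S_\eta(n)$ with $n=p_1^{e_1}\cdots p_r^{e_r}$, I would group the prime factors of $\mathfrak{a}$ according to the rational prime they lie above, writing $\mathfrak{a}=\mathfrak{a}_1\cdots \mathfrak{a}_r$ with $\mathfrak{a}_i$ supported on primes above $p_i$. Multiplicativity of the norm together with coprimality of the $p_i^{e_i}$ pins down $\mathfrak{N}(\mathfrak{a}_i)=p_i^{e_i}$. Since properties (i) and (ii) of Theorem 1.1 are inherited by any subproduct of prime factors of $\mathfrak{a}$, each $\mathfrak{a}_i$ lies in $S_\eta(p_i^{e_i})$. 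Conversely, given $\mathfrak{a}_i\in S_\eta(p_i^{e_i})$ for each $i$, the prime factors appearing in $\mathfrak{a}_1\cdots\mathfrak{a}_r$ lie above distinct rational primes, so (ii) holds for the full product, while (i) is inherited from the individual factors; Theorem 1.1 then gives $\mathfrak{a}_1\cdots\mathfrak{a}_r\in S_\eta(n)$.

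There is no real obstacle here; the only point requiring mild care is to check that the decomposition $\mathfrak{a}=\mathfrak{a}_1\cdots\mathfrak{a}_r$ is truly a bijection with the product $S_\eta(p_1^{e_1})\times\cdots\times S_\eta(p_r^{e_r})$ rather than merely a surjection or injection. This follows at once from the unique factorization of ideals in the Dedekind domain $\mathcal{O}[\frac{1}{\eta}]$: two different $r$-tuples yield different products, and any element of $S_\eta(n)$ has a unique such grouping of its prime factors.
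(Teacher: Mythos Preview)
Your proposal is correct and follows exactly the route the paper intends: the paper states the corollary immediately after Theorem~1.1 with only the remark that it is easy to derive the structure of $S_\eta(n)$ from that theorem, and gives no further argument. Your write-up simply fills in the details of that derivation, using conditions (i) and (ii) of Theorem~1.1 together with multiplicativity of the norm and unique factorization of ideals, which is precisely what the paper leaves implicit.
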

We are also interested in the set
$\{\alpha(\mathfrak{a}): \mathfrak{a}\in S_{\eta}(n)\}$ for $\alpha\in \mathcal{O}[\frac{1}{\eta}]$ which does not necessarily satisfy  $L=\mathbb{Q}(\alpha)$. We have
$$\prod_{\sigma}(\sigma(\alpha)-\alpha(\mathfrak{a}))
\in \mathfrak{a}\cap\mathbb{Z}[1/\eta],$$
where $\sigma$ runs through all embeddings of $\mathbb{Q}(\alpha)$ to $\mathbb{C}$. Then $f(\alpha(\mathfrak{a}))\equiv 0\mod n$, where $f(x)$ denotes the primitive minimal polynomial of $\alpha$ over $\mathbb{Z}$. Hence
$$\{\alpha(\mathfrak{a}): \mathfrak{a}\in S_{\eta}(n)\}
\subset\{v:f(v)\equiv 0\mod n\}.$$
When $L\neq \mathbb{Q}(\alpha)$, there may exist more than one $\mathfrak{a}\in S_{\eta}(n)$ such that $\alpha(\mathfrak{a})=v$. For example, let $p\nmid\eta$ be a prime number which splits completely in $L$ and dose not divide the discriminant of $f$. Then
$$\{\alpha(\mathfrak{a}): \mathfrak{a}\in S_{\eta}(p)\}
=\{v:f(v)\equiv 0\mod p\},$$
and for each root $v$, the number of $\mathfrak{p}\in S_{\eta}(p)$ with $\alpha(\mathfrak{p})=v$ is exactly $[L:\mathbb{Q}(\alpha)]$.

\section{distribution of residues of $\alpha$ modulo degree one ideals}

In this section, we focus our discussion on an fixed positive integer $\eta$, algebraic number field $L$ with ring of algebraic integers $\mathcal{O}$, and irrational algebraic number $\alpha\in \mathcal{O}[\frac{1}{\eta}]$ with primitive minimal polynomial $f(x)$ over $\mathbb{Z}$. Let $l:=[L:\mathbb{Q}]$, $d:=\deg f$ and $D_f$ be the discriminant of $f$. Let $S_{\eta}$ denote the set of degree one ideals of $\mathcal{O}[\frac{1}{\eta}]$ and $S_{\eta}(n)\subset S_{\eta}$ be the set of degree one ideals with norm $n$. Set
$$\rho_{\eta}(n):=|S_{\eta}(n)|.$$
Then $\rho_{\eta}$ is a function on $n$ with $(n,\eta)=1$. We extend it to all positive integers by setting $\rho_{\eta}(n):=0$ for $(n,\eta)>1$.
For $\eta=1$, $S_{\eta}$, $S_{\eta}(n)$ and $\rho_{\eta}(n)$ are simply denoted by $S, S(n)$ and $\rho(n)$. Since $\mathcal{O}[\frac{1}{\eta}]$ is the ring of fractions of $\mathcal{O}$ with respect to $\{\eta^n:n\ge 0\}$, the ideals of $\mathcal{O}[\frac{1}{\eta}]$ are in one to one correspondence with ideals of $\mathcal{O}$ that are prime to $\eta$. So we have
$$\rho_{\eta}(n)=\rho(n),\ \forall (n,\eta)=1.$$
Denote by $P$ the set of prime numbers which are prime to $\eta D_{f}$ and split completely in $L$. For a positive integer $n$, let $\omega(n)$ denote the number of different prime factors of $n$, and define
$$\omega_P(n):=|\{p\in P: p\mid n\}|.$$
Our main mission of this section is to prove Theorem 1.2. The first lemma is an immediately consequence of Corollary 2.5.

\begin{lem} We have

{\rm (i)}. $\rho_{\eta}(p^a)\le \rho_{\eta}(p)\le l$ for every $p$. Furthermore, $\rho_{\eta}(p^a)=\rho_{\eta}(p)$ if $p$ is unramified and $\rho_{\eta}(p)=l$ if $p$ split completely in $L$;

{\rm (ii)}. $\rho_{\eta}(n_1n_2)=\rho_{\eta}(n_1)\rho_{\eta}(n_2)$ if $n_1,n_2$ are relatively prime;

{\rm (iii)}. $\rho_{\eta}(n)\le l^{\omega(n)}$.
\end{lem}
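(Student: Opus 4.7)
The plan is to read off all three statements directly from Corollary~2.5, together with the classical identity $\sum_{\mathfrak{p}\mid p} e(\mathfrak{p}\mid p)\,f(\mathfrak{p}\mid p)=l$ for the decomposition of a rational prime in $L$. The entire lemma is essentially a bookkeeping exercise, so I will sketch only the key moves.

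For part (i), I would first dispose of the trivial case $p\mid\eta$ (both sides vanish by the convention $\rho_{\eta}\equiv 0$ there). For $p\nmid\eta$, Corollary~2.5 shows that $\mathfrak{p}\mapsto\mathfrak{p}^a$ injects the unramified part of $S_{\eta}(p)$ bijectively onto $S_{\eta}(p^a)$ for $a\ge 2$. This immediately gives $\rho_{\eta}(p^a)\le \rho_{\eta}(p)$, with equality when $p$ is unramified, since then no primes are dropped from $S_{\eta}(p)$ in forming $S_{\eta}(p^a)$. For the bound $\rho_{\eta}(p)\le l$, I would invoke the degree formula: since every $\mathfrak{p}\in S_{\eta}(p)$ has $f(\mathfrak{p}\mid p)=1$ and $e(\mathfrak{p}\mid p)\ge 1$, at most $l$ such primes can lie above $p$. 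When $p$ splits completely each contribution $e_i f_i$ equals one, yielding exactly $l$ degree one, unramified primes above $p$ and hence $\rho_{\eta}(p)=l$.

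For part (ii), the product decomposition $S_{\eta}(n_1 n_2) = S_{\eta}(n_1)\times S_{\eta}(n_2)$ supplied by Corollary~2.5 yields the multiplicativity of $\rho_{\eta}$ on coprime arguments by taking cardinalities. Part (iii) is then immediate by induction on $\omega(n)$: factoring $n=\prod_{i=1}^{r} p_i^{e_i}$ and combining (ii) with the chain $\rho_{\eta}(p_i^{e_i})\le \rho_{\eta}(p_i)\le l$ from (i) gives $\rho_{\eta}(n)\le l^{\omega(n)}$.

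There is no serious obstacle; the subtlest point is merely recognising that the only ingredient not already packaged inside Corollary~2.5 is the classical identity $\sum e_i f_i = l$, which is precisely what pins $\rho_{\eta}(p)$ down to at most $l$.
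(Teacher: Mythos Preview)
Your proposal is correct and matches the paper's approach exactly: the paper simply declares the lemma an ``immediate consequence of Corollary~2.5'' without further argument, and your write-up is precisely the unpacking of that sentence, including the one extra classical input ($\sum_{\mathfrak p\mid p} e_{\mathfrak p} f_{\mathfrak p}=l$) needed for the bound $\rho_{\eta}(p)\le l$.
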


As is usual in analytic number theory, we denote $e(x):=\exp(2\pi ix)$. For any integer $h$, let
$$\rho_{\eta}(h,n)=\sum_{\mathfrak{a}\in S_{\eta}(n)}
e\Big(\frac{h\alpha(\mathfrak{a})}{n}\Big).$$
Obviously $|\rho_{\eta}(h,n)|\le \rho_{\eta}(n)$ and $\rho_{\eta}(0,n)=\rho_{\eta}(n)$.

\begin{lem}
Let $n=n_1n_2$ with $(n_1,n_2)=1$. Suppose $\bar{n}_1, \bar{n}_2$ be positive integers with $n_1\bar{n}_1\equiv 1\mod n_2$ and $n_2\bar{n}_2\equiv 1\mod n_1$. Then
$$\rho_{\eta}(h,n)=\rho_{\eta}(h\bar{n}_2,n_1)
\rho_{\eta}(h\bar{n}_1,n_2).$$
\end{lem}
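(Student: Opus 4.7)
The plan is to factor both the indexing set $S_\eta(n)$ and the argument of the exponential along the coprime decomposition $n=n_1n_2$, and then recognize each factor as one of the sums on the right-hand side.

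First I would invoke Corollary~2.5 to get the bijection
$S_\eta(n)\longleftrightarrow S_\eta(n_1)\times S_\eta(n_2)$, $(\mathfrak{a}_1,\mathfrak{a}_2)\mapsto\mathfrak{a}_1\mathfrak{a}_2$, so that every $\mathfrak{a}\in S_\eta(n)$ is uniquely written as $\mathfrak{a}_1\mathfrak{a}_2$ with $\mathfrak{N}(\mathfrak{a}_i)=n_i$ and with $\mathfrak{a}_1,\mathfrak{a}_2$ comaximal (since they lie over coprime rational primes). The sum $\rho_\eta(h,n)$ then splits as a double sum over such pairs.

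Next I would compute $\alpha(\mathfrak{a}_1\mathfrak{a}_2)$ in terms of $\alpha(\mathfrak{a}_1),\alpha(\mathfrak{a}_2)$. Since $\mathfrak{a}_1+\mathfrak{a}_2=\mathcal{O}[1/\eta]$, the Chinese Remainder Theorem gives $\mathcal{O}[1/\eta]/\mathfrak{a}\cong\mathcal{O}[1/\eta]/\mathfrak{a}_1\times\mathcal{O}[1/\eta]/\mathfrak{a}_2$. Because each $\mathfrak{a}_i$ has inertia degree one, the element $\alpha(\mathfrak{a})\in[0,n)$ must also be the rational integer representative for each $\mathfrak{a}_i$; hence
\begin{equation*}
\alpha(\mathfrak{a})\equiv\alpha(\mathfrak{a}_i)\pmod{n_i},\qquad i=1,2,
\end{equation*}
and the classical integer CRT formula gives
\begin{equation*}
\alpha(\mathfrak{a})\equiv n_2\bar{n}_2\,\alpha(\mathfrak{a}_1)+n_1\bar{n}_1\,\alpha(\mathfrak{a}_2)\pmod{n_1n_2}.
\end{equation*}

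Dividing by $n=n_1n_2$ and reducing modulo $1$ yields
\begin{equation*}
\frac{h\alpha(\mathfrak{a})}{n}\equiv\frac{h\bar{n}_2\,\alpha(\mathfrak{a}_1)}{n_1}+\frac{h\bar{n}_1\,\alpha(\mathfrak{a}_2)}{n_2}\pmod{1},
\end{equation*}
so $e(h\alpha(\mathfrak{a})/n)$ factors as the product of $e(h\bar{n}_2\alpha(\mathfrak{a}_1)/n_1)$ and $e(h\bar{n}_1\alpha(\mathfrak{a}_2)/n_2)$. Summing independently over $\mathfrak{a}_1\in S_\eta(n_1)$ and $\mathfrak{a}_2\in S_\eta(n_2)$ gives exactly $\rho_\eta(h\bar{n}_2,n_1)\,\rho_\eta(h\bar{n}_1,n_2)$.

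The only step that requires any care is the first congruence $\alpha(\mathfrak{a})\equiv\alpha(\mathfrak{a}_i)\pmod{n_i}$: one must use that $\alpha(\mathfrak{a})$ and $\alpha(\mathfrak{a}_i)$ are both rational integers and that $\mathfrak{a}_i\cap\mathbb{Z}[1/\eta]=n_i\mathbb{Z}[1/\eta]$ (which follows from inertia degree one), so that their congruence modulo $\mathfrak{a}_i$ forces congruence modulo $n_i$. Everything else is bookkeeping, and no new estimates are needed.
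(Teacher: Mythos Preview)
Your proof is correct and follows essentially the same route as the paper: invoke Corollary~2.5 for the bijection $S_\eta(n)\cong S_\eta(n_1)\times S_\eta(n_2)$, observe $\alpha(\mathfrak{a})\equiv\alpha(\mathfrak{a}_i)\pmod{n_i}$, apply the integer CRT formula, and factor the exponential sum. Your added justification for the congruence $\alpha(\mathfrak{a})\equiv\alpha(\mathfrak{a}_i)\pmod{n_i}$ via $\mathfrak{a}_i\cap\mathbb{Z}[1/\eta]=n_i\mathbb{Z}[1/\eta]$ is a welcome clarification that the paper leaves implicit.
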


\begin{proof}
By Corollary 2.5, for each $\mathfrak{a}\in S_{\eta}(n)$, there exists unique
$$(\mathfrak{a}_1, \mathfrak{a}_2)\in
S_{\eta}(n_1)\times S_{\eta}(n_2)$$
such that $\mathfrak{a}=\mathfrak{a}_1\mathfrak{a}_2$. By definition,
$$\alpha(\mathfrak{a})\equiv \alpha(\mathfrak{a}_1)\mod n_1,\
\alpha(\mathfrak{a})\equiv \alpha(\mathfrak{a}_2)\mod n_2.$$
So we derive form the Chinese Remainder Theorem that
$$\alpha(\mathfrak{a})\equiv n_2\bar{n}_2\alpha(\mathfrak{a}_1)
+n_1\bar{n}_1\alpha(\mathfrak{a}_2) \mod n.$$
Therefore
\begin{align*}
\rho_{\eta}(h,n)=&\sum_{(\mathfrak{a}_1,\mathfrak{a}_2)
\in S_{\eta}(n_1)\times S_{\eta}(n_2)}
e\Big(\frac{hn_2\bar{n}_2\alpha(\mathfrak{a}_1)
+hn_1\bar{n}_1\alpha(\mathfrak{a}_2)}{n}\Big) \\
=&\rho_{\eta}(h\bar{n}_2,n_1)\rho_{\eta}(h\bar{n}_1,n_2).
\end{align*}
\end{proof}

\begin{lem}
Let $a,m$ be relatively prime positive integers. Then
$$\sum_{n\le x, \atop n\equiv a(m)}\rho(n)=\frac{c_1x}{\varphi(m)}+o(x),$$
where $\varphi$ is the Euler function and $c_1>0$ is a constant depending on $m$.
\end{lem}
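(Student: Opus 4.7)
The plan is to apply the Wiener--Ikehara Tauberian theorem (Lemma 2.4) to the Dirichlet series
$$F(s) := \sum_{\substack{n\ge 1 \\ n\equiv a(m)}} \frac{\rho(n)}{n^s},$$
which has non-negative coefficients. Orthogonality of Dirichlet characters modulo $m$ yields
$$F(s) = \frac{1}{\varphi(m)}\sum_{\chi \bmod m}\bar\chi(a)\, L_\rho(s,\chi),\qquad L_\rho(s,\chi) := \sum_{n=1}^\infty \frac{\chi(n)\rho(n)}{n^s},$$
so it is enough to describe the singular behaviour of each $L_\rho(s,\chi)$ on the line $\Re s = 1$.

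By Lemma 3.1, $\rho$ is multiplicative with $\rho(p^a) = \rho(p)$ whenever $p$ is unramified, so $L_\rho(s,\chi)$ admits an Euler product whose local factor at an unramified $p\nmid m$ equals $(1+(\rho(p)-1)\chi(p)p^{-s})/(1-\chi(p)p^{-s})$. I will compare this with the auxiliary $L$-series
$$L_L(s,\chi) := \sum_{\mathfrak{a}\subset\mathcal{O}}\frac{\chi(\mathfrak{N}(\mathfrak{a}))}{\mathfrak{N}(\mathfrak{a})^s}=\prod_{\mathfrak{p}}\Big(1-\frac{\chi(\mathfrak{N}(\mathfrak{p}))}{\mathfrak{N}(\mathfrak{p})^s}\Big)^{-1},$$
with $\chi$ extended by zero on integers not coprime to $m$. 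Taking logarithms of both Euler products, the leading contributions $\sum_{p\nmid m}\rho(p)\chi(p)p^{-s}$ cancel in the difference, and the remainders (from higher prime powers and from prime ideals of inertia degree $\ge 2$) form Dirichlet series that converge absolutely on $\Re s > 1/2$. Hence $L_\rho(s,\chi)/L_L(s,\chi)$ extends to a holomorphic, non-vanishing function on $\Re s > 1/2$, so the analytic behaviour of $L_\rho(s,\chi)$ on $\Re s \ge 1$ is determined by that of the Hecke-type series $L_L(s,\chi)$.

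The character $\chi\circ\mathfrak{N}$ is a finite-order idele class character of $L$; by Hecke's theorem $L_L(s,\chi)$ extends meromorphically to $\mathbb{C}$, with at most a simple pole at $s=1$, occurring exactly when $\chi\circ\mathfrak{N}$ is trivial on ideals coprime to $m$. For $\chi=\chi_0$ the character is trivial and $L_L(s,\chi_0)$ agrees with $\zeta_L(s)$ up to finitely many Euler factors at primes above $p\mid m$, contributing a main term with positive residue. Combining the residues across $\chi$, one obtains a constant $c_1=c_1(m)$ such that $F(s) - c_1/(\varphi(m)(s-1))$ extends analytically across $\Re s = 1$, and Lemma 2.4 yields the asymptotic. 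The main technical obstacle is the analytic continuation and non-vanishing of $L_L(s,\chi)$ on $\Re s = 1$ for non-principal $\chi$: this is supplied by Hecke's analytic continuation once $\chi\circ\mathfrak{N}$ is realised as an idele class character of $L$, the required non-vanishing reducing via class field theory to that of Dirichlet $L$-functions on the compositum $L\cdot\mathbb{Q}(\zeta_m)$.
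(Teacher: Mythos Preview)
Your approach is essentially identical to the paper's: both decompose via Dirichlet characters modulo $m$, compare the twisted series $\sum_n \chi(n)\rho(n)n^{-s}$ with the Hecke $L$-series attached to $\chi\circ\mathfrak{N}$ by showing that the difference of their logarithms is analytic on $\Re s>\tfrac12$, and then apply Wiener--Ikehara (Lemma~2.4). The paper carries out the comparison by first splitting off the Euler factors at primes of degree $\ge 2$ (the function $\Psi_\chi$) and then matching $\Phi_\chi$ with $\sum_n \chi(n)\rho(n)n^{-s}$, which is exactly your logarithmic comparison in different packaging; one small point is that your closing concern about \emph{non-vanishing} of $L_L(s,\chi)$ on $\Re s=1$ is unnecessary here, since Lemma~2.4 only requires that $F(s)-c_1/(\varphi(m)(s-1))$ extend analytically across the line, and this follows once $L_L(s,\chi)$ is pole-free at $s=1$ for nontrivial $\chi$ --- the paper simply asserts the latter without further comment.
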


\begin{proof}
Let $\chi$ be a Dirichlet character mod $m$. For each integral ideal $\mathfrak{a}$, define $$\chi\circ\mathfrak{N}(\mathfrak{a}):=\chi(\mathfrak{N}(\mathfrak{a})).$$
Then $\chi\circ\mathfrak{N}$ is a Hecke character mod $m\mathcal{O}$. Its  Hecke L-series, denoted by $L(\chi,s)$, is
$$L(\chi,s)=\sum_{\mathfrak{a}}\frac{\chi\circ\mathfrak{N}(\mathfrak{a})}
{\mathfrak{N}(\mathfrak{a})^s}=\prod_{\mathfrak{p}}\frac{1}{1-
\chi\circ\mathfrak{N}(\mathfrak{p})\mathfrak{N}(\mathfrak{p})^{-s}}.$$
It is analytic continuable to an entire function when $\chi$ is nontrivial, or to a meromorphic function with a simple pole at $s=1$ when $\chi$ is trivial.

Write $L(\chi,s)=\Phi_{\chi}(s)\Psi_{\chi}(s)$, where
$$\Phi_{\chi}(s):=\prod_{\mathfrak{p}\in S}\frac{1}{1-\chi\circ
\mathfrak{N}(\mathfrak{p})\mathfrak{N}(\mathfrak{p})^{-s}},\ \
\Psi_{\chi}(s):=\prod_{\mathfrak{p}\not\in S}\frac{1}{1-\chi\circ
\mathfrak{N}(\mathfrak{p})\mathfrak{N}(\mathfrak{p})^{-s}}.$$
Since $\mathfrak{N}(\mathfrak{p})\ge p^2$ for each $\mathfrak{p}\not\in S$, the infinite product of $\Psi_{\chi}(s)$ converges uniformly in any compact subset of the half plane $\Re(s)>1/2$. Hence $\Psi_{\chi}(s)$ is analytic and nonvanishing for all $s$ with $\Re(s)>1/2$. It follows that $1/\Psi_{\chi}(s)$ is analytic in the half plane $\Re(s)>1/2$. Therefore $\Phi_{\chi}(s)$ is analytic in $\Re(s)>1/2$ when $\chi$ is nontrivial, or meromorphic in $\Re(s)>1/2$ with a simple pole at $s=1$ when $\chi$ is trivial.

We then consider the analytic continuation of the Dirichlet L-series $$\sum_{n=1}^{\infty}\frac{\chi(n)\rho(n)}{n^s}=\prod_{p}\Big(1+
\frac{\chi(p)\rho(p)}{p^s}+\frac{\chi(p^2)\rho(p^2)}{p^{2s}}+\cdots\Big).$$
By $\mathfrak{N}(\mathfrak{p})=p$ for each $\mathfrak{p}\in S$, one has
$\Phi_{\chi}(s)=\prod_p(1-\chi(p)p^{-s})^{-\rho(p)}$. Since for large prime numbers $p$,
$$\log \Big(\frac{1}{1-\chi(p)p^{-s}}\Big)^{\rho(p)}-
\log\Big(1+\frac{\chi(p)\rho(p)}{p^s}+\frac{\chi(p^2)
\rho(p^2)}{p^{2s}}+\cdots\Big)=O\Big(\frac{\rho(p)}{p^{2s}}\Big),$$
it follows that
$\log \Psi_{\chi}(s)-\log(\sum_{n=1}^{\infty}\chi(n)\rho(n)/n^s)$
is analytic in $\Re(s)>1/2$. Taking exponentials, we obtain that
$\sum_{n=1}^{\infty}\chi(n)\rho(n)/n^s$ is continuable to an analytic function in $\Re(s)>1/2$ when $\chi$ is nontrivial, or to an meromorphic function with a simple pole at $s=1$ when $\chi$ is trivial.

Let $b$ be a positive integer with $ab\equiv 1\mod m$. Then
$$\sum_{n\ge 1 \atop n\equiv a (m)}\frac{\varphi(m)\rho(n)}{n^s}
=\sum_{n=1}^{\infty}\frac{\sum_{\chi}\chi(bn)\rho(n)}{n^s}
=\sum_{\chi}\sum_{n=1}^{\infty}\frac{\chi(bn)\rho(n)}{n^s},$$
where $\chi$ run through all Dirichlet characters mod $m$, is meromophic in $\Re(s)>1/2$ with a simple pole at $s=1$. Hence by the Ikehara's Tauberian Theorem,
$$\sum_{n\le x \atop n\equiv a (m)}\rho(n)=\frac{c_1x}{\varphi(m)}+o(x),$$
where $c_1$ is the residue of $\sum_{n=1}^{\infty}\chi(n)\rho(n)/n^s$ at $s=1$ when $\chi$ is trivial. This end the proof of Lemma 3.3.
\end{proof}

When $\chi$ is trivial,
$$\sum_{n=1}^{\infty}\frac{\chi(n)\rho(n)}{n^s}
=\sum_{n=1 \atop (n,m)=1}^{\infty}\frac{\rho(n)}{n^s}
=\sum_{n=1}^{\infty}\frac{\rho(n)}{n^s}
\prod_{p|m}(1+\rho(p)/p^s+\rho(p^2)/p^2+\cdots)^{-1}.$$
So $c_1=c\prod_{p|m}(1+\rho(p)/p+\rho(p^2)/p^2+\cdots)^{-1}$, where $c$ is the residue of $\sum_{n=1}^{\infty}\rho(n)/n^s$ at $s=1$. This shows how the constant $c_1$ depends on $m$. It also gives us
$$\sum_{n\le x \atop (n,m)=1}\rho(n)\sim c_1x.$$
Since $\rho_{\eta}(n)=\rho(n)$ for positive integers $n$ with $(\eta,n)=1$, we have
$$\sum_{n\le x \atop (n,m)=1}\rho_{\eta}(n)
=\sum_{n\le x \atop (n,\eta m)=1}\rho(n)
=cx/\prod_{p|\eta m}(1+\rho(p)/p+\rho(p^2)/p^2+\cdots).$$

\begin{lem}
There are constants $c_2$ and $c_3$ such that:

$$\sum_{p\le x}\rho_{\eta}(p)=\frac{x}{\log x}+O\Big(\frac{x}{\log^2x}
\Big), \leqno({\rm i}).$$

$$\sum_{p\le x}\frac{\rho_{\eta}(p)}{p}=\log\log x+c_2+O
(1/\log x), \leqno({\rm ii}).$$

$$\prod_{p\le x}(1+\frac{\rho_{\eta}(p)}{p})=c_3\log x(1+O(1/\log x)). \leqno({\rm iii}).$$
\end{lem}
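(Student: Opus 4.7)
The plan is to derive all three estimates as corollaries of Landau's Prime Ideal Theorem (Lemma 2.3) for the field $L$. For (i), I would first observe that $\rho_{\eta}(p)=\rho(p)$ whenever $(p,\eta)=1$ and $\rho_{\eta}(p)=0$ for the finitely many $p\mid\eta$, so $\sum_{p\le x}\rho_{\eta}(p)$ equals the number of degree one prime ideals of $\mathcal{O}$ of norm at most $x$, up to an $O(1)$ adjustment. Any prime ideal $\mathfrak{p}$ of $\mathcal{O}$ of inertia degree $f\ge 2$ with $\mathfrak{N}(\mathfrak{p})\le x$ lies above a rational prime $p\le x^{1/2}$, and at most $l$ such ideals sit above each $p$, so these contribute $O(x^{1/2})$ in total. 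Subtracting this higher-degree contribution from the total $\sum_{\mathfrak{N}(\mathfrak{p})\le x}1=x/\log x+O(x/\log^2 x)$ of Lemma 2.3 yields (i).

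For (ii), I would apply Abel summation. Setting $A(t):=\sum_{p\le t}\rho_{\eta}(p)$, one has
\begin{equation*}
\sum_{p\le x}\frac{\rho_{\eta}(p)}{p}=\frac{A(x)}{x}+\int_{2}^{x}\frac{A(t)}{t^{2}}\,dt.
\end{equation*}
Substituting $A(t)=t/\log t+O(t/\log^2 t)$ from (i), the boundary term contributes $1/\log x+O(1/\log^2 x)$, the main integral becomes $\int_2^x dt/(t\log t)=\log\log x-\log\log 2$, and the error integral $\int_2^x O\bigl(1/(t\log^2 t)\bigr)\,dt$ is a bounded constant up to an $O(1/\log x)$ tail, since $\int dt/(t\log^2 t)=-1/\log t+\mathrm{const}$. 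Collecting all pieces produces $\log\log x+c_2+O(1/\log x)$ for an appropriate constant $c_2$.

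For (iii), I would take logarithms. Since $\rho_{\eta}(p)\le l$ by Lemma 3.1(i), the Taylor expansion
\begin{equation*}
\log\Bigl(1+\frac{\rho_{\eta}(p)}{p}\Bigr)=\frac{\rho_{\eta}(p)}{p}+O(p^{-2})
\end{equation*}
holds uniformly in $p$. Summing and splitting off the absolutely convergent series $C:=\sum_{p}\bigl[\log(1+\rho_{\eta}(p)/p)-\rho_{\eta}(p)/p\bigr]$ gives
\begin{equation*}
\sum_{p\le x}\log\Bigl(1+\frac{\rho_{\eta}(p)}{p}\Bigr)=\log\log x+(c_2+C)+O(1/\log x),
\end{equation*}
by (ii), because the tail $\sum_{p>x}O(p^{-2})$ is $O(1/x)$. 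Exponentiating and using $e^{O(1/\log x)}=1+O(1/\log x)$ completes the proof with $c_3:=e^{c_2+C}$.

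The whole argument is a straightforward deduction from Lemma 2.3, and the principal technical point is the careful merging of $O$-terms in the partial summation step of (ii); the explicit evaluation of the two elementary integrals $\int dt/(t\log t)$ and $\int dt/(t\log^2 t)$ supplies everything needed to combine them into the single $O(1/\log x)$ error claimed in (ii) and (iii).
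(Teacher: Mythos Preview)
Your proposal is correct and follows essentially the same route as the paper: both argue (i) by comparing $\sum_{p\le x}\rho(p)$ with the full prime-ideal count and bounding the degree $\ge 2$ contribution by $O(x^{1/2})$, derive (ii) from (i) via Abel summation with the same splitting of the main and remainder integrals, and obtain (iii) by taking logarithms, using $\log(1+\rho_\eta(p)/p)=\rho_\eta(p)/p+O(p^{-2})$, and exponentiating. The only cosmetic difference is that the paper isolates the primes $p\le l$ before invoking the Taylor expansion, whereas you absorb them directly into the uniform $O(p^{-2})$; both are fine.
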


\begin{proof}
We only prove the lemma for $\eta=1$, the general case follows immediately by $\rho_{\eta}(p)=\rho(p)$ for all $p\nmid\eta$.

(i). One has
$$\sum_{\mathfrak{N}(\mathfrak{p})\le x}1
=\sum_{t=1}^{\infty}\sum_{p\le x^{1/t}}
|\{\mathfrak{p}:\mathfrak{N}(\mathfrak{p})=p^t\}|
=\sum_{p\le x}\rho(p)+O(x^{1/2}/\log x).$$
So (i) follows from the Prime Ideal Theorem. \\

(ii). Write
$$A(x):=\sum_{p\le x}\rho(p)=x/\log x+R(x),$$
and $a_n:=A(n)-A(n-1)$. By the summation by parts formula,
\begin{align*}
\sum_{p\le x}\frac{\rho(p)}{p}
=&\sum_{2<n\le x}\frac{a_n}{n}+\frac{\rho(2)}{2}
=\frac{A(x)}{x}+\int_{2}^{x}\frac{A(t)}{t^2}dt \\
=&\frac{1}{\log x}+O\Big(\frac{1}{\log^2 x}\Big)
+\int_{2}^x\frac{1}{t\log t}dt+\int_{2}^{x}
\frac{R(t)}{t^2}dt \\
=&\log\log x-\log\log 2+\int_{2}^{x}\frac{R(t)}
{t^2}dt+O\Big(\frac{1}{\log x}\Big).
\end{align*}
To prove (ii), it remains to show
$$\int_{2}^{x}\frac{R(t)}{t^2}dt={\rm constant}
+O\Big(\frac{1}{\log x}\Big).$$
In fact, since $R(t)=O(t/\log^2t)$ by (i), the improper integral $\int_{2}^{\infty}\frac{R(t)}{t^2}dt$ converges. Denote by
$I:=\int_{2}^{\infty}\frac{R(t)}{t^2}dt$. Then
$$\int_{2}^{x}\frac{R(t)}{t^2}dt=I-\int_{x}^{\infty}
\frac{R(t)}{t^2}dt=I+O\Big(\int_{x}^{\infty}\frac{1}
{t\log ^2t}dt\Big)=I+O\Big(\frac{1}{\log x}\Big)$$
as desired. \\

(iii). Using the expansion of $\log(1+x)\ (|x|<1)$, one gets
$$\sum_{p\le x}\log\Big(1+\frac{\rho(p)}{p}\Big)
=\sum_{p\le l}\log\Big(1+\frac{\rho(p)}{p}\Big)
+\sum_{l<p\le x}\frac{\rho(p)}{p}+\sum_{l<p\le x}r_p,$$
where
$$r_p:=\sum_{n\ge2}\frac{(-1)^{n-1}\rho(p)^n}{np^n}=O(1/p^2).$$
The infinite sum $\sum_{p>l}r_p$ converges absolutely to some real number, say, $R$. Then
$$\sum_{l<p\le x}r_p=R+\sum_{p>x}O(1/p^2)=R+O(1/x).$$
By (ii) we can obtain
$$\sum_{p\le x}\log\Big(1+\frac{\rho(p)}{p}\Big)
=\log\log x+{\rm constant}+O(1/\log x),$$
from which (iii) follows by taking exponentials on both sides.
\end{proof}

\begin{lem}
Let $N$ be the normal closure of $L/\mathbb{Q}$. Then there is a constant $c_4$ that
$$\prod_{p\le x \atop p\in P}\Big(1+\frac{\rho_{\eta}(p)}{p}\Big)
=c_4\log^{1/[N:L]}x+O(1).$$
\end{lem}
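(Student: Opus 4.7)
The plan is to mimic the argument used for Lemma~3.4(iii), with the Prime Ideal Theorem for $L$ replaced by Landau's theorem applied to the Galois closure $N$. First I would observe that for every $p\in P$ the prime $p$ is unramified in $L$ and splits completely there, so by Lemma~3.1(i),
$$\rho_{\eta}(p)=l=[L:\mathbb{Q}].$$
Thus the product we wish to estimate is
$$\prod_{p\le x,\ p\in P}\Big(1+\frac{l}{p}\Big),$$
and up to finitely many exceptional primes (those dividing $\eta D_f$ or ramifying in $N$, contributing only an $O(1)$ multiplicative constant) the set $P$ coincides with the set of rational primes that split completely in $N$, since $p$ splits completely in $L$ if and only if it splits completely in the normal closure $N$.

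Next I would establish the counting estimate
$$\pi_{P}(x):=\sum_{p\le x,\ p\in P}1=\frac{1}{[N:\mathbb{Q}]}\cdot\frac{x}{\log x}+O\Big(\frac{x}{\log^{2}x}\Big).$$
For this I apply Landau's Prime Ideal Theorem (Lemma~2.3) to the number field $N$, giving $\sum_{\mathfrak{N}(\mathfrak{P})\le x}1=x/\log x+O(x/\log^{2}x)$ where $\mathfrak{P}$ runs over prime ideals of $\mathcal{O}_N$. The contribution of prime ideals of inertia degree $\ge 2$ is $O(x^{1/2}/\log x)$, as in the proof of Lemma~3.4(i). For a rational prime $p$ unramified in $N$, the number of degree-one prime ideals of $\mathcal{O}_N$ above $p$ equals $[N:\mathbb{Q}]$ when $p$ splits completely, and is $0$ otherwise. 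Hence the above sum equals $[N:\mathbb{Q}]\,\pi_P(x)+O(1)+O(x^{1/2}/\log x)$, and the claimed asymptotic follows.

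Then, exactly as in the partial-summation argument for Lemma~3.4(ii), I would deduce
$$\sum_{p\le x,\ p\in P}\frac{1}{p}=\frac{1}{[N:\mathbb{Q}]}\log\log x+c'+O\Big(\frac{1}{\log x}\Big)$$
for some constant $c'$, using that the error term $O(x/\log^{2}x)$ from $\pi_P$ integrates to a convergent improper integral plus $O(1/\log x)$.

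Finally, taking logarithms in the product and expanding as in Lemma~3.4(iii),
$$\sum_{p\le x,\ p\in P}\log\Big(1+\frac{l}{p}\Big)=l\sum_{p\le x,\ p\in P}\frac{1}{p}+\sum_{p\le x,\ p\in P}r_p,$$
where $r_p=O(1/p^{2})$, so the tail $\sum_{p\in P}r_p$ converges absolutely and the contribution of primes $>x$ is $O(1/x)$. Combining with the estimate above yields
$$\log\!\prod_{p\le x,\ p\in P}\!\Big(1+\frac{l}{p}\Big)=\frac{l}{[N:\mathbb{Q}]}\log\log x+\log c_4+O\Big(\frac{1}{\log x}\Big),$$
and using $l/[N:\mathbb{Q}]=[L:\mathbb{Q}]/[N:\mathbb{Q}]=1/[N:L]$ we exponentiate to obtain
$$\prod_{p\le x,\ p\in P}\Big(1+\frac{l}{p}\Big)=c_4\log^{1/[N:L]}x\,\Big(1+O\Big(\frac{1}{\log x}\Big)\Big).$$
The error is $O(\log^{1/[N:L]-1}x)=O(1)$ because $[N:L]\ge 1$, giving the stated formula. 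The main technical obstacle is the first step: converting Landau's theorem on $N$ into a clean asymptotic for $\pi_P(x)$ with an error term strong enough to survive partial summation, which requires carefully isolating the degree-one prime ideals of $N$ and tracking the multiplicity $[N:\mathbb{Q}]$ at completely split primes.
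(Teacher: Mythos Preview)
Your proposal is correct and follows essentially the same approach as the paper: both arguments apply Landau's Prime Ideal Theorem to the Galois closure $N$, use the normality of $N/\mathbb{Q}$ to conclude that degree-one primes of $N$ lie only above completely split rational primes (up to finitely many exceptions), extract the constant $1/[N:L]$ from the ratio $[L:\mathbb{Q}]/[N:\mathbb{Q}]$, and then pass from $\sum_{p\in P,\,p\le x}\rho_\eta(p)/p$ to the product via the logarithmic expansion of Lemma~3.4(iii). The only cosmetic difference is that the paper invokes Lemma~3.4(ii) directly for the field $N$ to obtain $\sum_{p\le x,\,p\in P}\rho_N(p)/p=\log\log x+\text{const}+O(1/\log x)$, whereas you first isolate $\pi_P(x)$ and then redo the partial summation; the content is the same.
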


\begin{proof}
It is enough to show the lemma for $\eta=1$. Let $\rho_N(n)$ denote number of degree one ideals of norm $n$ in $N$.  Since $N$ is normal, $\rho_N(p)=0$ for all but finitely many prime numbers $p\not\in P$. Then by Lemma 3.4, we have
$$\sum_{p\le x,p\in P}\frac{\rho_N(p)}{p}=\sum_{p\le x}
\frac{\rho_N(p)}{p}-\sum_{p\le x,p\not\in P}\frac{\rho_N(p)}
{p}=\log\log x+{\rm constant}+O(1/\log x).$$
Furthermore, since $N$ is the normal closure of $L/\mathbb{Q}$, a prime number $p$ splits completely in $N$ if and only if it splits completely in $L$. That is $\rho(p)=[L:\mathbb{Q}]$ and $\rho_N(p)=[N:\mathbb{Q}]$ for all $p\in P$. Therefore
$$\sum_{p\le x, p\in P}\frac{\rho(p)}{p}
=\frac{1}{[N:L]}\sum_{p\le x,p\in P}\frac{\rho_N(p)}{p}
=\frac{\log\log x}{[N:L]}+{\rm constant}+O(1/\log x),$$
Hence the lemma can be derived from
\begin{align*}
\log\Big(\prod_{p\le x \atop p\in P}\Big(1+\frac{\rho(p)}{p}\Big)\Big)
=&\sum_{p\le x, p\in P}\frac{\rho(p)}{p}+{\rm constant}+O(1/x) \\
=&\frac{\log\log x}{[N:L]}+{\rm constant}+O(1/\log x).
\end{align*}
\end{proof}

Now we give the proof of Theorem 1.2.

{\it Proof of Theorem 1.2.}
Let $\{\mathfrak{a}_i\}_{i=1}^{\infty}$ be an arrangement of elements in $S_{\eta}$ such that $\mathfrak{N}(\mathfrak{a}_i)\le\mathfrak{N}(\mathfrak{a}_{i+1})$ for all $i\ge 1$. Then by Weyl's criterion on uniformly distributed sequences, to show that $\{\alpha(\mathfrak{a}_i)/\mathfrak{N}(\mathfrak{a}_i)\}_{i=1}^{\infty}$ is uniformly distributed, it is equivalent to prove for all $h\neq0$ that
$$\lim_{N\rightarrow\infty}\frac{1}{N}\sum_{i=1}^{N}
e(h\alpha(\mathfrak{a}_i)/\mathfrak{N}(\mathfrak{a}_i))=0. \leqno(3.1)$$
Suppose that $\mathfrak{N}(\mathfrak{a}_N)=x$. Then
$N=\sum_{n\le x}\rho_{\eta}(n)+o(x)$ and
$$\sum_{i=1}^{N}e(h\alpha(\mathfrak{a}_i)/\mathfrak{N}(\mathfrak{a}_i))
=\sum_{n\le x}\rho_{\eta}(h,n)+o(x), h\neq 0.$$
Since $\sum_{n\le x}\rho_{\eta}(n)=cx+o(x)$ by Lemma 3.3, (3.1) is equivalent to
$$\lim_{x\rightarrow\infty}\frac{\sum_{n\le x}\rho_{\eta}(h,n)}
{\sum_{n\le x}\rho_{\eta}(n)}=0,\ \forall h\neq 0.$$
Again by $\sum_{n\le x}\rho_{\eta}(n)\thicksim cx$, we need only to show that $\sum_{n\le x}\rho_{\eta}(h,n)=o(x)$ for nonzero integer $h$.

We actually go farther than this. Instead of considering a nonzero constant $h$, we will estimate the sum $\sum_{n\le x}\rho_{\eta}(h(n),n)$ for a function $h$ defined on the set $\{n>0: (n,\eta)=1\}$ such that: (i) $h(n)$ is always a nonzero integer; (ii) for any $m|n$, $h(n)\equiv h(m)\mod m$; (iii) there exists $C>0$ such that $\gcd(h(n),n)<C$ for all $n$. A particular example that one needs keep in mind is as following. Let $m$ be an positive integer such that all of its prime factors divide $\eta$. For a positive integer $n$ prime to $\eta$, $h(n)$ is defined to be the unique integer in $\{1,...,n\}$ such that $mh(n)\equiv 1\mod n$. Obviously such a function satisfies all the conditions listed.  To simplify the notation, we will still use $h$ other than $h(n)$ to represent a function.

Let $X:=x^{\frac{1}{24el\log\log x}}$, $\xi(n):=\prod_{p<X}p^{v_p(n)}$, where $v_p(n)$ denotes $p$-adic value of $n$. Write
\begin{align*}
\sum_{n\le x}\rho_{\eta}(h,n)=\sum_{n\le x \atop \xi(n)\le x^{1/3}}
\rho_{\eta}(h,n)+\sum_{n\le x, \atop \xi(n)> x^{1/3}}\rho_{\eta}(h,n).
\end{align*}
Let $n_1,n_2$ represent positive integers with $\xi(n_1)=n_1, \xi(n_2)=1$. Let $\bar{n}_1$ be the inverse of $n_1\mod n_2$, and $\bar{n}_2$ be the inverse of $n_2\mod n_1$. Since each $n$ can be uniquely written as a product of $n_1n_2$, it follows from Lemma 3.2 that
\begin{align*}
\sum_{n\le x \atop \xi(n)\le x^{1/3}}|\rho_{\eta}(h,n)|
=&\sum_{n_1n_2\le x \atop n_1\le x^{1/3}}
|\rho_{\eta}(h\bar{n}_2,n_1)\rho_{\eta}(h\bar{n}_1,n_2)| \\
\le&\sum_{n_1\le x^{1/3}}\sum_{n_2\le x/n_1}
\rho_{\eta}(n_2)|\rho_{\eta}(h\bar{n}_2,n_1)| \\
\le&\sum_{n_1\le x^{1/3}}
\Big(\sum_{n_2\le x/n_1}\rho_{\eta}(n_2)^2\Big)^{1/2}
\Big(\sum_{n_2\le x/n_1}|\rho_{\eta}(h\bar{n}_2,n_1)|^2\Big)^{1/2}
\end{align*}

So far we do nothing new but repeating Hooley's treatments on the exponential sums. By applying Hooley's method (see estimate of $\Sigma_2$, $\Sigma_5$ and $\Sigma_6$ in \cite{Ho}), we have
$$\sum_{n\le x \atop \xi(n)>x^{1/3}}|\rho_{\eta}(h,n)|
\le\sum_{n\le x \atop \xi(n)>x^{1/3}}\rho_{\eta}(n)
=O\Big(\frac{x}{\log x}\Big)$$
$$\sum_{n_2\le x/n_1}\rho_{\eta}(n_2)^2
=O\Big(\frac{x\log\log^{l^2} x}{n_1\log x}\Big)$$
\begin{align*}
\sum_{n_2\le x/n_1}|\rho_{\eta}(h\bar{n}_2,n_1)|^2
=\sum_{1\le a\le n_1 \atop (a,n_1)=1}|\rho_{\eta}(ah,n_1)|^2
\sum_{n_2\le x/n_1 \atop \bar{n}_2\equiv a\mod n_1} 1\\
\le O\Big(\frac{x\log\log x}{n_1\varphi(n_1)\log x}\Big)
\sum_{a=1}^{n_1}|\rho_{\eta}(ah,n_1)|^2.
\end{align*}
Now we have $\sum_{n\le x, \xi(n)>x^{1/3}}|\rho_{\eta}(h,n)|=o(x)$ and
$$\sum_{n\le x \atop \xi(n)\le x^{1/3}}|\rho_{\eta}(h,n)|
=\sum_{n_1\le x^{1/3}}O\Big(\frac{x\log\log^{(l^2+1)/2}x}
{\sqrt{\varphi(n_1)}n_1\log x}\Big)
\Big(\sum_{a=1}^{n_1}|\rho_{\eta}(ah,n_1)|^2\Big)^{1/2}. \leqno(3.2)$$
To proceed, an upper bound of $\sum_{a=1}^{n_1}|\rho_{\eta}(ah,n_1)|^2$ is needed. We leave this as a lemma below. Note that Hooley's upper bound in Lemma 1 of \cite{Ho} is not applicable in our case.

\begin{lem}\label{keytot2}
$$\sum_{a=1}^{n}|\rho_{\eta}(ah,n)|^2\le \frac{n(h,n)\rho_{\eta}(n)^2}
{d^{\omega_P(n)}}.$$
\end{lem}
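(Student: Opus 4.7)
The plan is to expand the square and bound a pair count of ideals. Write $g=(h,n)$ and $m=n/g$. Orthogonality in $a$, together with the reduction $n\mid h(\alpha(\mathfrak{a})-\alpha(\mathfrak{b}))\iff m\mid\alpha(\mathfrak{a})-\alpha(\mathfrak{b})$ (since $(h/g,m)=1$), gives
$$\sum_{a=1}^{n}|\rho_{\eta}(ah,n)|^2=n\cdot N,\qquad N:=|\{(\mathfrak{a},\mathfrak{b})\in S_{\eta}(n)^2:\alpha(\mathfrak{a})\equiv\alpha(\mathfrak{b})\pmod m\}|.$$
The lemma is then equivalent to $N\le g\rho_{\eta}(n)^2/d^{\omega_P(n)}$.

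I factor $n=\prod p_i^{e_i}$ and $g=\prod p_i^{g_i}$. By Corollary 2.5 there is a bijection between $S_{\eta}(n)$ and $\prod_i S_{\eta}(p_i^{e_i})$, and (as used in the proof of Lemma 3.2) $\alpha(\mathfrak{a})\equiv\alpha(\mathfrak{a}_i)\pmod{p_i^{e_i}}$. The CRT then makes the count multiplicative: $N=\prod_i N_i$, where $N_i$ is the analogous count at level $p_i^{e_i}$ modulo $p_i^{e_i-g_i}$.

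The heart of the proof is bounding each $N_i$. For $p_i\notin P$ I use the trivial bound $N_i\le\rho_{\eta}(p_i^{e_i})^2$. For $p_i\in P$ with $g_i=e_i$, the congruence is vacuous, so $N_i=\rho_{\eta}(p_i^{e_i})^2=l^2$ by Lemma 3.1. The crucial case is $p_i\in P$ with $g_i<e_i$. Passing to $K=\mathbb{Q}(\alpha)$, for any prime $\mathfrak{P}$ of $\mathcal{O}[1/\eta]$ above $p_i$ I set $\mathfrak{q}=\mathfrak{P}\cap\mathcal{O}_K[1/\eta]$. Since $p_i\in P$ splits completely and is unramified in $L$ (hence in $K$), both $\mathcal{O}_K[1/\eta]/\mathfrak{q}^{e_i}$ and $\mathcal{O}[1/\eta]/\mathfrak{P}^{e_i}$ have order $p_i^{e_i}$, so the natural map between them is an isomorphism and $\alpha(\mathfrak{P}^{e_i})=\alpha(\mathfrak{q}^{e_i})$. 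Applying Theorem 1.1 inside $K$, the $d$ primes of $\mathcal{O}_K[1/\eta]$ above $p_i$ correspond to the $d$ distinct roots of $f\bmod p_i$; thus the congruence $\alpha(\mathfrak{a}_i)\equiv\alpha(\mathfrak{b}_i)\pmod{p_i}$ forces the two primes of $\mathcal{O}[1/\eta]$ underlying $\mathfrak{a}_i,\mathfrak{b}_i$ to lie over the same $\mathfrak{q}$, and conversely if they lie above the same $\mathfrak{q}$ then $\alpha(\mathfrak{a}_i)=\alpha(\mathfrak{b}_i)$. With $d$ choices for $\mathfrak{q}$ and $l/d$ primes of $\mathcal{O}[1/\eta]$ above each, I get $N_i=d\cdot(l/d)^2=l^2/d$.

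Setting $A=|\{p_i\in P:g_i<e_i\}|$ and $B=\omega_P(n)-A$, the three cases combine to $N\le\rho_{\eta}(n)^2/d^A$, so the goal reduces to $d^B\le g$. Here I invoke the elementary observation that every $p\in P$ satisfies $p\ge d$: indeed $p$ splitting completely in $L\supseteq K$ gives $f$ exactly $d$ distinct roots in $\mathbb{F}_p$, forcing $p\ge d$. Hence $p_i^{e_i}\ge p_i\ge d$ for each $p_i\in P$ with $g_i=e_i$; these $B$ prime powers are pairwise coprime divisors of $g$, so $g\ge d^B$, completing the argument. I expect the main obstacle to be the split-prime computation $N_i=l^2/d$: producing the isomorphism between the appropriate quotients of $\mathcal{O}_K[1/\eta]$ and $\mathcal{O}[1/\eta]$ and combining it with Theorem 1.1 in $K$ is the step that yields the crucial saving factor of $d$.
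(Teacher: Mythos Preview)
Your proof is correct, but it takes a slightly different route from the paper's. After the common opening step (orthogonality gives $\sum_a|\rho_\eta(ah,n)|^2=nN$ with $N$ the pair count modulo $m=n/(h,n)$), the paper does \emph{not} carry the congruence modulus through the multiplicative decomposition. Instead it immediately bounds $N\le (h,n)\,g(n)$ via the elementary inequality $\sum_v A_v A_{v+im}\le\sum_v A_v^2$, where $A_v=|\{\mathfrak{a}:\alpha(\mathfrak{a})=v\}|$ and $g(n)=\sum_v A_v^2$ counts pairs with \emph{equal} residues. Only then is multiplicativity invoked, on $g$, and the computation $g(p)=d\,[L:\mathbb{Q}(\alpha)]^2=\rho_\eta(p)^2/d$ for $p\in P$ (which is essentially your split-prime calculation) yields $g(n)\le\rho_\eta(n)^2/d^{\omega_P(n)}$ directly.

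Your version keeps the congruence condition through the CRT factorization, which means that at primes $p_i\in P$ with $p_i^{e_i}\mid(h,n)$ you only get $N_i=l^2$ with no saving of $d$; you then recover the shortfall $d^B$ from the factor $(h,n)$ via the nice observation that $p\in P$ forces $p\ge d$. This works, but the paper's decoupling step is cleaner: it extracts the full factor $(h,n)$ at once and never needs the inequality $p\ge d$. On the other hand, your argument gives the exact value $N_i=l^2/d$ at the relevant primes rather than just an upper bound, and makes the role of the intermediate field $K=\mathbb{Q}(\alpha)$ more explicit.
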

\begin{proof}
\begin{align*}
\sum_{a=1}^{n}|\rho_{\eta}(ah,n)|^2=&\sum_{a=1}^{n}
\sum_{\mathfrak{a}\in S_{\eta}(n)}
e\Big(\frac{ah\alpha(\mathfrak{a})}{n}\Big)
\sum_{\mathfrak{a}\in S_{\eta}(n)}
e\Big(-\frac{ah\alpha(\mathfrak{a})}{n}\Big)\\
=&\sum_{a=1}^{n}\sum_{\mathfrak{a,b}\in S_{\eta}(n)}
e\Big(\frac{ah(\alpha(\mathfrak{a})-\alpha(\mathfrak{b}))}{n}\Big) \\
=&n|\{(\mathfrak{a,b})\in S_{\eta}^2(n):\alpha(\mathfrak{a})\equiv
\alpha(\mathfrak{b})\mod n/(h,n)\}| \\
=&n\sum_{v=0}^{n-1}|\{\mathfrak{a}:\alpha(\mathfrak{a})=v\}|
\sum_{i=1}^{(h,n)}|\{\mathfrak{b}: \alpha(\mathfrak{b})\equiv
v+in/(h,n)\mod n\}| \\
\le&n(h,n)\sum_{v=1}^{n}|\{\mathfrak{a}:\alpha(\mathfrak{a})=v\}|^2
=:n(h,n)g(n),
\end{align*}
where
$$g(n)=\sum_{v=1}^{n}|\{\mathfrak{a}:\alpha(\mathfrak{a})=v\}|^2
=|\{(\mathfrak{a,b})\in S_{\eta}^2(n):\alpha(\mathfrak{a})
=\alpha(\mathfrak{b})\}|.$$
It is easy to check that $g(n)$ is multiplicative, i.e., $g(n_1n_2)=g(n_1)g(n_2)$, provided that $(n_1,n_2)=1$. Meanwhile, $g(p^a)\le g(p)$  for all $a>0$, and the equal happens when $p$ is unramified. So we need only discuss on $g(p)$ and then can get an upper bound for $g(n)$ by multiplicativity.

For $p\not\in P$, we take the trivial bound $g(p)\le \rho_{\eta}(p)^2$. For $p\in P$, there are exactly $d$ different roots for $f(x)\equiv 0\mod p$ and $|\{\mathfrak{a}:\alpha(\mathfrak{a})=v\}|^2=[L:K]^2$
for each root $v$. Thus
$$g(p)=d[L:\mathbb{Q}(\alpha)]^2=\rho_{\eta}(p)^2/d.$$
Therefore
$$g(n)\le \prod_{p|n \atop p\in P}\frac{\rho_{\eta}(p^{v_p(n)})^2}{d}
\prod_{p|n \atop p\not\in P}\rho_{\eta}(p^{v_p(n)})^2
=\frac{\rho_{\eta}(n)^2}{d^{\omega_P(n)}}.$$
This proves Lemma \ref{keytot2}.
\end{proof}

Let's go back to the estimate of
$$\sum_{n\le x, \xi(n)\le x^{1/3}}|\rho_{\eta}(h,n)|.$$
Applying Lemma 3.6 to (3.2), we deduce that
$$\sum_{n\le x \atop \xi(n)\le x^{1/3}}|\rho_{\eta}(h,n)|
=O\Big(\frac{x\log\log^{(l^2+1)/2}x}{\log x}
\sum_{n_1\le x^{1/3}}\frac{\rho_{\eta}(n_1)}{\sqrt{n_1
\varphi(n_1)d^{\omega_P(n_1)}}}\Big).\leqno(3.3)$$
It is left to estimate $\sum_{n_1\le x^{1/3}}\frac{\rho_{\eta}(n_1)}
{\sqrt{n_1\varphi(n_1)d^{\omega_P(n_1)}}}$.

\begin{lem}
$$\sum_{n\le x}\frac{\rho_{\eta}(n)}{\sqrt{n\varphi(n)d^{\omega_P(n)}}}
=O\Big(\log^{1-\frac{1}{\sqrt{d}[N:L]}}x\Big).$$
\end{lem}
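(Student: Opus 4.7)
The summand $f(n):=\rho_\eta(n)/\sqrt{n\varphi(n)d^{\omega_P(n)}}$ is non-negative and multiplicative, so the plan is to compare the partial sum to an Euler product and then apply the building blocks already established in Lemmas 3.4 and 3.5. Since every $n\le x$ has all its prime factors in $[1,x]$, one has
$$\sum_{n\le x}f(n)\ \le\ \prod_{p\le x}\Big(1+\sum_{k\ge 1}f(p^{k})\Big).$$
A short geometric-series computation using $\varphi(p^{k})=p^{k-1}(p-1)$, $\omega_P(p^{k})=[p\in P]$ and $\rho_\eta(p^{k})=\rho_\eta(p)$ for unramified $p\nmid\eta D_f$ gives $\sum_{k\ge 1}f(p^{k})=\rho_\eta(p)/(p\sqrt{d^{[p\in P]}})\cdot(1+O(1/p))$, with the $O(1/p)$ error absolutely summable over $p$ by Lemma 3.4(i) and partial summation. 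Modulo an absolute $O(1)$ factor, the task therefore reduces to bounding $\prod_{p\le x}(1+\rho_\eta(p)/(p\sqrt{d^{[p\in P]}}))$.

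Next, I would split this Euler product according to whether $p\in P$ or not. For $p\notin P$, dividing Lemma 3.4(iii) by Lemma 3.5 gives
$$\prod_{p\le x,\,p\notin P}\Big(1+\frac{\rho_\eta(p)}{p}\Big)\asymp\log^{1-1/[N:L]}x.$$
For $p\in P$, where $\rho_\eta(p)=l$, I would adapt the proof of Lemma 3.5 with $\rho_\eta(p)/p$ replaced by $l/(p\sqrt{d})$: the Chebotarev identity $\sum_{p\in P,\,p\le x}1/p=(\log\log x)/[N:\mathbb{Q}]+O(1)$ together with $\log(1+u)=u+O(u^{2})$ yields $\sum_{p\in P,\,p\le x}\log(1+l/(p\sqrt{d}))=(\log\log x)/(\sqrt{d}[N:L])+O(1)$, hence
$$\prod_{p\le x,\,p\in P}\Big(1+\frac{l}{p\sqrt{d}}\Big)\asymp\log^{1/(\sqrt{d}[N:L])}x.$$
Combining these two pieces produces a bound of the shape $\log^{\kappa}x$ for some explicit $\kappa<1$, which is what one needs to chase in order to match the stated exponent $1-1/(\sqrt d[N:L])$.

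The main obstacle is not any single step but rather careful book-keeping at the boundaries. The finitely many primes dividing $\eta D_f$ and the ramified primes should be isolated and absorbed into the overall multiplicative constant; the tail corrections $\sum_{k\ge 2}f(p^{k})$ must be shown not to disturb the leading exponent; and the passage from $\log\prod(1+g(p))$ to $\sum g(p)$ has an implicit $\sum g(p)^{2}$ remainder that needs to be absolutely convergent, which holds here since $g(p)\ll 1/p$. Ensuring that none of these three issues perturb the precise stated exponent is the most delicate part of the argument; once arranged, the lemma reduces to Mertens-product manipulations in exactly the style of Lemmas 3.4 and 3.5.
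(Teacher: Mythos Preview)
Your approach is correct and essentially identical to the paper's: bound the sum by the Euler product via multiplicativity, separate the local factors according to whether $p\in P$ or not, and then feed in Lemmas~3.4(iii) and~3.5. Your worry about matching the stated exponent is well-founded but harmless: both your computation and the paper's own proof actually produce the exponent $1-(1-1/\sqrt d)/[N:L]$ rather than the stated $1-1/(\sqrt d\,[N:L])$, and since either exponent is strictly less than $1$ this discrepancy is immaterial for the application in~(3.3).
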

\begin{proof}
Since $\frac{\rho_{\eta}(n)}{\sqrt{n\varphi(n)d^{\omega_P(n)}}}$ is a multiplicative arithmetic function, it follows form Lemma 3.5 that
\begin{align*}
\sum_{n\le x}\frac{\rho_{\eta}(n)}{\sqrt{n\varphi(n)d^{\omega_P(n)}}}
\le&\prod_{p\le x}\Big(1+\frac{\rho_{\eta}(p)}{\sqrt{p\varphi(p)
d^{\omega_P(p)}}}+\frac{\rho_{\eta}(p^2)}{\sqrt{p^2\varphi(p^2)
d^{\omega_P(p^2)}}}+\cdots \Big) \\
=&\prod_{p\le x \atop p\not\in P}\Big(1+\frac{\rho_{\eta}(p)}{p}
+O\Big(\frac{1}{p^{2}}\Big)\Big)
\prod_{p\le x \atop p\in P}\Big(1+\frac{\rho_{\eta}(p)}{p\sqrt{d}}
+O\Big(\frac{1}{p^{2}}\Big)\Big) \\
=&O\Big(\prod_{p\le x }\Big(1+\frac{\rho_{\eta}(p)}{p}\Big)
\prod_{p\le x \atop p\in P}\Big(1+\frac{\rho_{\eta}(p)}
{p}\Big)^{1-1/\sqrt{d}}\Big) \\
=&O\Big(\log^{1-\delta}x\Big),
\end{align*}
where $\delta=\frac{1-1/\sqrt{d}}{[N:L]}$.
\end{proof}

By Lemma 3.7 and (3.3), we have
$$\sum_{n<x \atop \xi(n)\le x^{1/3}}|\rho_{\eta}(h,n)|
=O\Big(\frac{x\log\log^{(l^2+1)/2}x}
{\log^{\delta}x}\Big).$$
Hence we conclude that
$$\sum_{n\le x}|\rho_{\eta}(h,n)|=O\Big(\frac{x\log\log^{(l^2+1)/2}x}
{\log^{\delta}x}\Big)=o(x). \leqno(3.4)$$
This proves Theorem 1.2. \hfill$\Box$ \\

As we can see, we largely follow Hooley's idea to estimate the exponential sum $\sum_{n\le x}\rho_{\eta}(h,n)$. The upper bound obtained by this method is far away from good. An expected bound is at least a smaller than one power of $x$. But one also should notice that this method is powerful if we merely consider uniform distribution. It is also worth mention that we actually get $\sum_{n\le x}|\rho_{\eta}(h,n)|=o(x)$. It allows us to get a generalization of Theorem 1.2.
\begin{thm}
Let $A$ be any set of positive integers and $S_{\eta,A}:=\cup_{n\in A}S_{\eta}(n)$. If there is a constant $c>0$ such that $\sum_{n\le x, n\in A}\rho_{\eta}(n)>cx$, then the sequence
$$\{\alpha(\mathfrak{a})/\mathfrak{N}(\mathfrak{a})
\}_{\mathfrak{a}\in S_{\eta,A}}$$
is uniformly distributed.
\end{thm}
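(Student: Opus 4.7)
The plan is to mimic exactly the Weyl-criterion reduction carried out in the proof of Theorem 1.2, and then invoke the absolute-value estimate (3.4) from that same proof, which is strong enough to survive restriction of the summation to any subset of integers.

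First, enumerate $S_{\eta,A}$ as $\{\mathfrak{a}_i\}_{i=1}^\infty$ with $\mathfrak{N}(\mathfrak{a}_i)\le\mathfrak{N}(\mathfrak{a}_{i+1})$, and set $N(x):=|\{i:\mathfrak{N}(\mathfrak{a}_i)\le x\}|=\sum_{n\le x,\,n\in A}\rho_\eta(n)$. The hypothesis gives $N(x)>cx$ for all sufficiently large $x$. By Weyl's criterion, uniform distribution of the sequence $\{\alpha(\mathfrak{a}_i)/\mathfrak{N}(\mathfrak{a}_i)\}$ is equivalent to
$$\lim_{x\to\infty}\frac{1}{N(x)}\sum_{n\le x,\,n\in A}\rho_\eta(h,n)=0,\qquad h\neq 0.$$

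Next, by the triangle inequality and extension of the summation range from $A$ to all positive integers,
$$\Big|\sum_{n\le x,\,n\in A}\rho_\eta(h,n)\Big|\le\sum_{n\le x,\,n\in A}|\rho_\eta(h,n)|\le\sum_{n\le x}|\rho_\eta(h,n)|.$$
The right-hand side is $o(x)$ by (3.4). Combined with $N(x)>cx$, this shows that the ratio above is bounded by $o(x)/(cx)\to 0$, which is what Weyl's criterion requires.

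There is no genuine obstacle: the whole point of carrying (3.4) through Section 3 with absolute values, rather than settling for the signed version $\sum_{n\le x}\rho_\eta(h,n)=o(x)$ that Weyl's criterion alone would demand, is precisely to make the exponential-sum bound robust under arbitrary restrictions on the index set. Theorem 3.8 then drops out as an essentially formal corollary, with the only substantive condition being the positive-density hypothesis $N(x)>cx$ which is needed to ensure the denominator in the Weyl ratio grows linearly.
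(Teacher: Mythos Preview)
Your proposal is correct and follows essentially the same approach as the paper's own proof: reduce via Weyl's criterion to showing the Weyl sums over $n\in A$ are $o(x)$, then bound them by the full absolute-value sum and invoke (3.4), using the hypothesis $N(x)>cx$ for the denominator. Your closing remark about why (3.4) was carried through with absolute values is exactly the observation the paper makes just before stating the theorem.
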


\begin{proof}
By the similar discussion as in the beginning of the proof of Theorem 1.2, the sequence $\{\alpha(\mathfrak{a})/\mathfrak{N}(\mathfrak{a})
\}_{\mathfrak{a}\in S_{\eta,A}}$ is uniformly distributed if and only if
$$\lim_{x\rightarrow\infty}\frac{\sum_{n\le x,n\in A}\rho_{\eta}(h,n)}
{\sum_{n\le x,n\in A}\rho_{\eta}(n)}=0,\ \forall h\neq0.$$
Since $\sum_{n\le x,n\in A}\rho_{\eta}(n)>cx$, it is enough to show
$\sum_{n\le x,n\in A}\rho_{\eta}(h,n)=o(x)$. This is obviously true since by (3.4),
$$\sum_{n\le x,n\in A}\rho_{\eta}(h,n)\le \sum_{n\le x}
|\rho_{\eta}(h,n)|=o(x).$$
This end the proof of Theorem 3.8.
\end{proof}

It is worth mention that when $A$ is the set of squarefree positive integers, Theorem 3.8 is an immediate
corollary of a much more general and intrinsic work of Kowalski and Soundararajan \cite{KS}.

\section{roots of a system of polynomial congruences}

In this section, we will apply Theorem 1.2 to study the distribution of the roots of a system of polynomial congruences. First recall the Weyl's criterion on uniformly distributed sequences of higher dimension.

\begin{lem}
Let $a_n:=(a_n^{(1)},...,a_n^{(r)}), n>0$ be a sequence of elements in $[0,1]^r$. Then $\{a_n\}_{n>0}$ is uniformly distributed if and only if
$$\sum_{n\le x}e(h_1a_n^{(1)}+\cdots+h_ra_n^{(r)})=o(x)$$
for any $(h_1,...,h_r)\in\mathbb{Z}^r\setminus\{(0,...,0)\}$.
\end{lem}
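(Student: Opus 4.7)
The plan is to follow the classical Weyl equidistribution argument, which is independent of dimension once the notation is set up. I will prove both directions, reducing uniform distribution in $[0,1]^r$ to an approximation statement about averages of continuous $\mathbb{Z}^r$-periodic functions.

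First I would handle the easy implication. If $\{a_n\}$ is uniformly distributed in $[0,1]^r$, then by sandwiching the indicator of any box between continuous functions (Riemann integrability) one obtains
\[
\frac{1}{N}\sum_{n\le N}f(a_n)\longrightarrow \int_{[0,1]^r} f(x)\,dx
\]
for every continuous $\mathbb{Z}^r$-periodic $f$. Applying this to the exponential $f(x_1,\dots,x_r)=e(h_1x_1+\cdots+h_rx_r)$ with $(h_1,\dots,h_r)\ne(0,\dots,0)$, whose integral over $[0,1]^r$ vanishes, yields the required $o(N)$ estimate on the Weyl sum.

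For the converse, I would argue by successive approximation. Assuming the Weyl sums vanish in the Cesàro sense, linearity gives that $\frac{1}{N}\sum_{n\le N}P(a_n)\to\int_{[0,1]^r} P$ for every trigonometric polynomial $P(x)=\sum_{(h_1,\dots,h_r)\in F} c_{h_1,\dots,h_r}\,e(h_1x_1+\cdots+h_rx_r)$ with $F\subset\mathbb{Z}^r$ finite. By the Stone--Weierstrass theorem (or by taking a product of one-dimensional Fej\'er kernels), trigonometric polynomials are uniformly dense in the space of continuous $\mathbb{Z}^r$-periodic functions, so the same convergence extends to every continuous periodic $f$. Finally, for any box $B=[a_1,b_1)\times\cdots\times[a_r,b_r)\subset[0,1]^r$, given $\varepsilon>0$ one constructs continuous periodic functions $g_-\le \mathbf{1}_B\le g_+$ with $\int(g_+-g_-)<\varepsilon$; sandwiching
\[
\frac{1}{N}\sum_{n\le N}g_-(a_n)\le \frac{|\{n\le N:a_n\in B\}|}{N}\le \frac{1}{N}\sum_{n\le N}g_+(a_n)
\]
and letting $N\to\infty$ and then $\varepsilon\to 0$ yields $\lim_N |\{n\le N:a_n\in B\}|/N=\mathrm{vol}(B)$, which is the definition of uniform distribution in $[0,1]^r$.

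The only nontrivial step is the uniform approximation of continuous periodic functions by trigonometric polynomials in several variables, but this is standard (tensor products of one-variable Fej\'er kernels). Everything else is a direct lifting of the one-dimensional Weyl criterion stated in the introduction, so no new ideas are required; I expect the write-up to be short.
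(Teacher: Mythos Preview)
Your argument is correct and is the standard proof of the multidimensional Weyl criterion. The paper, however, does not prove this lemma at all: it simply states it as a known fact (``First recall the Weyl's criterion on uniformly distributed sequences of higher dimension''), implicitly referring back to Weyl \cite{We}. So there is nothing to compare against; your write-up supplies a proof where the paper gives none, and what you wrote is exactly the classical approximation-by-trigonometric-polynomials argument one would expect.
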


The Theorem 3.8 can be extended to higher dimension.

\begin{thm}
Let $\alpha_1,...,\alpha_r$ be algebraic numbers in $L$ such that $1,\alpha_1,...,\alpha_r$ are linearly independent over $\mathbb{Q}$ and $\eta$ be an positive integer such that $\eta\alpha_i$ are algebraic integers for all $1\le i\le r$. Let $A$ and $S_{\eta,A}$ be as in Theorem 3.8. Then the sequence
$$\Big\{\Big(\frac{\alpha_1(\mathfrak{a})}{\mathfrak{N}(\mathfrak{a})},
...,\frac{\alpha_r(\mathfrak{a})}{\mathfrak{N}(\mathfrak{a})}
\Big)\Big\}_{\mathfrak{a}\in S_{\eta,A}} \leqno(4.1)$$
is uniformly distributed.
\end{thm}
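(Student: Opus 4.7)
The plan is to reduce the problem to the one-dimensional Theorem 3.8 via a standard linearization. By the higher-dimensional Weyl criterion (Lemma 4.1), together with the hypothesis $\sum_{n\le x,\,n\in A}\rho_{\eta}(n)\ge cx$, the claim reduces to proving that for every nonzero $(h_1,\dots,h_r)\in\mathbb{Z}^r$,
$$\sum_{n\le x,\,n\in A}\sum_{\mathfrak{a}\in S_{\eta}(n)}
e\!\left(\frac{h_1\alpha_1(\mathfrak{a})+\cdots+h_r\alpha_r(\mathfrak{a})}{n}\right)=o(x).$$

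Next I would set $\beta:=h_1\alpha_1+\cdots+h_r\alpha_r$. Since $\eta\beta=\sum_i h_i(\eta\alpha_i)$ is a $\mathbb{Z}$-linear combination of algebraic integers, $\beta\in\mathcal{O}[\frac{1}{\eta}]$. A key observation is that $\beta$ is irrational: if $\beta$ were equal to some $q\in\mathbb{Q}$, then $q\cdot 1-h_1\alpha_1-\cdots-h_r\alpha_r=0$ would be a nontrivial $\mathbb{Q}$-linear dependence among $1,\alpha_1,\dots,\alpha_r$, contradicting the hypothesis of the theorem.

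The crucial step is to convert the $r$-fold exponential sum into a one-dimensional sum attached to $\beta$. Because every $\mathfrak{a}\in S_{\eta}$ satisfies $\mathcal{O}[\frac{1}{\eta}]/\mathfrak{a}\cong\mathbb{Z}/n\mathbb{Z}$ with $n=\mathfrak{N}(\mathfrak{a})$, the residue map $\gamma\mapsto\gamma(\mathfrak{a})$ is (via this isomorphism) a ring homomorphism from $\mathcal{O}[\frac{1}{\eta}]$ to $\mathbb{Z}/n\mathbb{Z}$, and in particular
$$\beta(\mathfrak{a})\equiv h_1\alpha_1(\mathfrak{a})+\cdots+h_r\alpha_r(\mathfrak{a})\pmod n.$$
Hence the displayed sum above equals $\sum_{n\le x,\,n\in A}\sum_{\mathfrak{a}\in S_{\eta}(n)}e(\beta(\mathfrak{a})/n)$.

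Finally, I would apply Theorem 3.8 directly to the irrational number $\beta\in\mathcal{O}[\frac{1}{\eta}]$: it yields the uniform distribution of $\{\beta(\mathfrak{a})/\mathfrak{N}(\mathfrak{a})\}_{\mathfrak{a}\in S_{\eta,A}}$, which, translated through the one-dimensional Weyl criterion together with the lower bound $\sum_{n\le x,\,n\in A}\rho_{\eta}(n)\ge cx$, is precisely the required $o(x)$ estimate. No genuinely new technical obstacle arises; the irrationality of $\beta$ is the one place where the linear independence hypothesis enters in an essential way, and everything else is either bookkeeping or an invocation of Theorem 3.8 and the ring-theoretic additivity of residues modulo degree one ideals.
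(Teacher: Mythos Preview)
Your proposal is correct and follows essentially the same route as the paper: form $\beta=h_1\alpha_1+\cdots+h_r\alpha_r$, use linear independence of $1,\alpha_1,\dots,\alpha_r$ to see that $\beta$ is irrational, use the additivity of residues modulo a degree one ideal to rewrite the multidimensional exponential sum as a one-dimensional one for $\beta$, and then invoke the one-dimensional result. The only cosmetic difference is that the paper appeals directly to the estimate $(3.4)$ (i.e., $\sum_{n\le x}|\rho_{\eta}(h,n)|=o(x)$) rather than to the packaged statement of Theorem~3.8, which avoids the small round-trip through Weyl's criterion you make at the end; the mathematical content is identical.
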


\begin{proof}
For $(h_1,...,h_r)\in\mathbb{Z}^r\setminus\{0\}$, let $\alpha=h_1\alpha_1+\cdots+h_r\alpha_r$. Then for each degree one ideal $\mathfrak{a}\subset\mathcal{O}[\frac{1}{\eta}]$, $$\alpha(\mathfrak{a})\equiv h_1\alpha_1(\mathfrak{a})+\cdots
+h_r\alpha_r(\mathfrak{a})\mod \mathfrak{N}(\mathfrak{a}).$$
Since $1,\alpha_1...,\alpha_r$ are linearly independent over $\mathbb{Q}$, $\alpha$ is irrational if one of $h_1,...,h_r$ is nonzero. It then follows from (3.4) that
$$\sum_{n\le x}\sum_{\mathfrak{a}\in S_{\eta,A}(n)}
e\Big(\frac{h_1\alpha_1(\mathfrak{a})+\cdots+
h_r\alpha_r(\mathfrak{a})}{\mathfrak{N}(\mathfrak{a})}\Big)
=\sum_{n\le x}\sum_{\mathfrak{a}\in S_{\eta,A}(n)}
e\Big(\frac{\alpha(\mathfrak{a})}{n}\Big)=o(x).$$
Hence by high dimensional Weyl's criterion, the sequence (4.1) is uniformly distributed.
\end{proof}
Let $f_1,...,f_r$ and $D_1,...,D_r$ be given as in Theorem 1.3. For each $i$ with $1\le i\le r$, let $\alpha_i$ be a root of $f_i$ and $K_i=\mathbb{Q}(\alpha_i)$.  Write $L=\mathbb{Q}(\alpha_1,...,\alpha_r)$. Since $(D_i,D_j)=1$ for all $1\le i<j\le r$, it follows from Lemma 2.2 that
$$[L:\mathbb{Q}]=[K_1:\mathbb{Q}]\cdots[K_r:\mathbb{Q}].$$
Write $D=\prod_{i=1}^r D_r$.
The following Theorem 4.3 extend the furthermore part of Theorem 1.1.

\begin{thm}
Let $\eta$ be an positive integer such that $\eta\alpha_i$ are algebraic integers. For $(n, \eta D)=1$, there is an bijection between $S_{\eta}(n)$ and $\{(v_1,...,v_r): f_i(v_i)\equiv 0\mod n, 1\le i\le r\}$,
given by
$$\mathfrak{a}\mapsto(\alpha_1(\mathfrak{a}),...,
\alpha_r(\mathfrak{a}))=(v_1,...,v_r)$$
and
$$(v_1,...,v_r)\mapsto(\alpha_1-v_1,...,\alpha_r-v_r,n)=\mathfrak{a}.$$
\end{thm}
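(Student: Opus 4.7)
The plan is to reduce the claim to the single-polynomial correspondence of Theorem~\ref{thm1}, exploiting the coprimality of the discriminants $D_i$ via Lemma~2.2 to decouple the compositum $L=K_1\cdots K_r$. Throughout, let $\mathcal{O}_i$ denote the ring of integers of $K_i=\mathbb{Q}(\alpha_i)$ and fix $n$ with $(n,\eta D)=1$.

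The forward direction reduces to the norm computation used in the proof of Theorem~\ref{thm1}. Setting $v_i:=\alpha_i(\mathfrak{a})$, one has
$$\frac{f_i(v_i)}{c_i}=\prod_{\sigma}(v_i-\sigma(\alpha_i))\in\mathfrak{a}\cap\mathbb{Z}[1/\eta]=n\mathbb{Z}[1/\eta],$$
where $c_i$ is the (unit) leading coefficient of $f_i$ and $\sigma$ runs over the embeddings $K_i\hookrightarrow\mathbb{C}$; hence $f_i(v_i)\equiv 0\mod n$ for each $i$.

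For the reverse direction, given a tuple $(v_1,\ldots,v_r)$ of roots, I would set $\mathfrak{a}:=(\alpha_1-v_1,\ldots,\alpha_r-v_r,n)\subset\mathcal{O}_L[1/\eta]$. Applying Theorem~\ref{thm1} inside each $K_i$, the ideal $\mathfrak{b}_i:=(\alpha_i-v_i,n)\subset\mathcal{O}_i[1/\eta]$ is of degree one with norm $n$, factoring as $\mathfrak{b}_i=\prod_{p\mid n}\mathfrak{p}_{i,p}^{v_p(n)}$ for degree one primes $\mathfrak{p}_{i,p}$ of $\mathcal{O}_i[1/\eta]$. Note that $\mathfrak{a}=\sum_{i}\mathfrak{b}_i\mathcal{O}_L[1/\eta]$, which in a Dedekind domain is the greatest common divisor of the extensions $\mathfrak{b}_i\mathcal{O}_L[1/\eta]$. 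I would then work locally at each prime $p\mid n$: iterating Lemma~2.2 shows $p\nmid D_L$, so $p$ is unramified in $L$, and the bijection between degree one primes of $L$ above $p$ and compatible tuples of degree one primes of the $K_i$ above $p$ (given by contraction $\mathfrak{P}\mapsto(\mathfrak{P}\cap\mathcal{O}_i[1/\eta])_i$) identifies the greatest common divisor at $p$ with a single prime power $\mathfrak{P}_p^{v_p(n)}$. Piecing these together across all $p\mid n$ via Corollary~2.5 exhibits $\mathfrak{a}$ as an element of $S_\eta(n)$.

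The main obstacle, as I see it, is establishing this local compositum bijection cleanly. The most transparent route is via completions: a degree one prime $\mathfrak{p}_i$ of $K_i$ above $p$ provides an embedding $K_i\hookrightarrow\mathbb{Q}_p$, and the equality $[L:\mathbb{Q}]=\prod_i[K_i:\mathbb{Q}]$ furnished by Lemma~2.2 (equivalently, linear disjointness of the $K_i$ over $\mathbb{Q}$) promotes any tuple of such embeddings to a unique compatible embedding $L\hookrightarrow\mathbb{Q}_p$, which encodes a degree one prime of $L$ above $p$. Once this is in hand, the two maps of the theorem are mutually inverse: if $\mathfrak{a}=(\alpha_1-v_1,\ldots,\alpha_r-v_r,n)$ then $\alpha_i\equiv v_i\mod\mathfrak{a}$, so uniqueness of $\alpha_i(\mathfrak{a})$ in $[0,n)$ forces $\alpha_i(\mathfrak{a})=v_i$; conversely, the ideal constructed from the tuple $(\alpha_1(\mathfrak{a}),\ldots,\alpha_r(\mathfrak{a}))$ is visibly contained in $\mathfrak{a}$ and, having the same norm $n$, coincides with $\mathfrak{a}$.
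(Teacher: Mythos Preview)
Your argument is correct but proceeds along a genuinely different route from the paper's. The paper argues by induction on $r$ through the tower $\mathbb{Q}\subset\mathbb{Q}(\alpha_1)\subset\mathbb{Q}(\alpha_1,\alpha_2)\subset\cdots\subset L$: at each step it applies Dedekind's factorization theorem (Lemma~2.1) to the relative extension to show that $(\alpha_1-v_1,\ldots,\alpha_r-v_r,p)$ is a degree one prime, then repeats the valuation argument from the proof of Theorem~\ref{thm1} to upgrade to $(\alpha_1-v_1,\ldots,\alpha_r-v_r,p^e)=\mathfrak{p}^e$, and finally multiplies over $p\mid n$. You instead work ``horizontally'': you invoke Theorem~\ref{thm1} separately in each $K_i$ and then assemble the answer in $L$ by identifying $\mathfrak{a}$ as the gcd of the extended ideals $\mathfrak{b}_i\mathcal{O}_L[1/\eta]$ and analysing this gcd prime by prime via completions and linear disjointness. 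The paper's route is more elementary in that it never leaves Dedekind's theorem and avoids completions entirely; your route is more structural and makes the bijection between degree one primes of $L$ and $r$-tuples of degree one primes of the $K_i$ explicit, which is illuminating in its own right. One point you should spell out if you write this up: after establishing $p\nmid D_L$, you need that each $\mathfrak{p}_{i,p}$ is unramified in $L$ (immediate from multiplicativity of ramification indices) so that $\mathfrak{p}_{i,p}^{v_p(n)}\mathcal{O}_L[1/\eta]=\prod_{\mathfrak{P}\mid\mathfrak{p}_{i,p}}\mathfrak{P}^{v_p(n)}$ with the correct exponent, which is what pins down the gcd as $\mathfrak{P}_p^{v_p(n)}$ rather than a smaller power.
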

\begin{proof}
We first show that
$$\mathfrak{p}:=(\alpha_1-v_1,...,\alpha_r-v_r,p)
\subset\mathcal{O}[1/\eta]$$
is a degree one prime ideal over $p$ by induction on $r$, where $\mathcal{O}$ denote the ring of integers of $L$. When $r=1$, this is true by Dedekind's Theorem. Suppose $r>1$ and $(\alpha_1-v_1,...,\alpha_i-v_i,p)$ is a degree one ideal in $\mathcal{O}[\frac{1}{\eta}]\cap\mathbb{Q}(\alpha_1,...,\alpha_i)$ for $1\le i\le r-1$. Denote by $\mathcal{O}'$ the ring of integers of $\mathbb{Q}(\alpha_1,...,\alpha_{r-1})$ and $\mathfrak{p}':=(\alpha_1-v_1,...,\alpha_{r-1}-v_{r-1},p)$. Since
$$[L:\mathbb{Q}(\alpha_1,...,\alpha_{r-1})]=[K_r:\mathbb{Q}]=\deg f_r,$$
the minimal polynomial $f_r(x)$ of $\alpha_r$ over $\mathbb{Q}$ is again its minimal polynomial over $\mathbb{Q}(\alpha_1,...,\alpha_{r-1})$.
By induction assumption, $\mathfrak{p}'$ is of degree one. So
$$\mathbb{Z}[1/\eta]/p\mathbb{Z}[1/\eta]
\cong\mathcal{O}'[1/\eta]/\mathfrak{p}'.$$
Hence a root of $f_r(x)\equiv 0\mod p$ is a root of $f_r(x)\equiv 0\mod \mathfrak{p}'$. Now applying Dedekind's factorization theorem to $\mathfrak{p}'$ and the extension $\mathcal{O}[\frac{1}{\eta}]/\mathcal{O}'[\frac{1}{\eta}]$, one gets
$(\alpha_r-v_r, \mathfrak{p}')$, i.e., $(\alpha_1-v_1,...,\alpha_r-v_r,p)$, is a prime ideal of degree one in $\mathcal{O}[\frac{1}{\eta}]$.

We then show that $(\alpha_1-v_1,...,\alpha_r-v_r,p^e)=\mathfrak{p}^e$, provided $e=v_p(n)$. This is also proved by induction on $r$. When $r=1$, we have showed that $(\alpha_1-v_1,p^e)=(\alpha_1-v_1,p)^e$ in the proof of Theorem 1.1. Suppose that
$$(\alpha_1-v_1,...,\alpha_i-v_i,p^e)=(\alpha_1-v_1,...,\alpha_i-v_i,p)^e$$ for $1\le i\le r-1$. Then $(\alpha_1-v_1,...,\alpha_r-v_r,p^e)=(\alpha_r-v_r,\mathfrak{p}'^e)$.
It is then left to prove $(\alpha_r-v_r,\mathfrak{p}'^e)=(\alpha_r-v_r,\mathfrak{p}')^e$. This can be done by the same argument of proving the case $r=1$.

Finally we show
$$(\alpha_1-v_1,...,\alpha_r-v_r,n)=\prod_{p|n}
(\alpha_1-v_1,...,\alpha_r-v_r,p^{v_p(n)}),$$
from which the theorem follows. On the one hand,
$$(\alpha_1-v_1,...,\alpha_r-v_r,n)\subset
(\alpha_1-v_1,...,\alpha_r-v_r,p^{v_p(n)})$$
for each $p|n$. Thus
$$(\alpha_1-v_1,...,\alpha_r-v_r,n)\subset\prod_{p|n}
(\alpha_1-v_1,...,\alpha_r-v_r,p^{v_p(n)}).$$
On the other hand, by comparing the generators, we get
$$\prod_{p|n}(\alpha_1-v_1,...,\alpha_r-v_r,p^{v_p(n)})
\subset(\alpha_1-v_1,...,\alpha_r-v_r,n).$$
This implies the desired equation and proves Theorem 4.3.
\end{proof}

To simplify the notations, we let $\mathbf{h,v}$ denote r-tuple of integers. Suppose that all components of $\mathbf{v}$ are nonnegative. For an integer $a$, define $a\mathbf{h}=(ah_1,...,ah_r)$ and $\mathbf{h}\cdot\mathbf{v}=h_1v_1+\cdots +h_rv_r$.  Let $\mathbf{f}(x)=(f_1(x),...,f_r(x))$,
$\mathbf{f}(\mathbf{v})=(f_1(v_1),...,f_r(v_r))$. We say $\mathbf{f}(\mathbf{v})\equiv 0\mod n$ if $f_i(v_i)\equiv 0\mod n$ for $1\le i\le r$. Denote
$$\rho(\mathbf{h},n)=\sum_{\mathbf{f}(\mathbf{v})\equiv 0 (n)}e(\mathbf{h}\cdot\mathbf{v}/n).$$
Let $\mathbf{0}=(0,...,0)$ and $d=\max_{1\le i\le r}\deg f_i$. Then
$$|\rho(\mathbf{h},n)|\le\rho(\mathbf{0},n)
=\prod_{i=1}^r|\{v_i: f_i(v_i)\equiv 0\mod n\}|
=O(d^{r\omega(n)}).$$

\begin{lem}
Let $n,n_i,\bar{n}_i,i=1,2$ be given as in Lemma 3.2. Then
$$\rho(\mathbf{h},n)=\rho(\bar{n}_2\mathbf{h},n_1)
\rho(\bar{n}_1\mathbf{h},n_2).$$
\end{lem}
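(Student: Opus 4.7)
The plan is to mimic the proof of Lemma 3.2 componentwise, reducing the system $\mathbf{f}(\mathbf{v})\equiv 0\bmod n$ to two systems modulo $n_1$ and $n_2$ via the Chinese Remainder Theorem (CRT) applied coordinate by coordinate.

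First I would set up the bijection of solution sets. Since $(n_1,n_2)=1$, for each $i$ with $1\le i\le r$ CRT gives a bijection between residues $v_i\bmod n$ and pairs $(v_i^{(1)},v_i^{(2)})$ with $v_i^{(j)}\bmod n_j$, and moreover $f_i(v_i)\equiv 0\bmod n$ if and only if $f_i(v_i^{(1)})\equiv 0\bmod n_1$ and $f_i(v_i^{(2)})\equiv 0\bmod n_2$. Taking the product over $i$, this yields a bijection
\begin{equation*}
\{\mathbf{v}:\mathbf{f}(\mathbf{v})\equiv 0\bmod n\}\longleftrightarrow
\{\mathbf{v}^{(1)}:\mathbf{f}(\mathbf{v}^{(1)})\equiv 0\bmod n_1\}\times
\{\mathbf{v}^{(2)}:\mathbf{f}(\mathbf{v}^{(2)})\equiv 0\bmod n_2\},
\end{equation*}
with $\mathbf{v}$ determined by $\mathbf{v}\equiv n_2\bar{n}_2\mathbf{v}^{(1)}+n_1\bar{n}_1\mathbf{v}^{(2)}\bmod n$ in each coordinate.

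Next I would substitute this CRT decomposition into the definition of $\rho(\mathbf{h},n)$. Taking the dot product with $\mathbf{h}$ gives
\begin{equation*}
\mathbf{h}\cdot\mathbf{v}\equiv n_2\bar{n}_2(\mathbf{h}\cdot\mathbf{v}^{(1)})+n_1\bar{n}_1(\mathbf{h}\cdot\mathbf{v}^{(2)})\bmod n,
\end{equation*}
so that
\begin{equation*}
e\!\left(\frac{\mathbf{h}\cdot\mathbf{v}}{n}\right)=e\!\left(\frac{\bar{n}_2\,\mathbf{h}\cdot\mathbf{v}^{(1)}}{n_1}\right)e\!\left(\frac{\bar{n}_1\,\mathbf{h}\cdot\mathbf{v}^{(2)}}{n_2}\right),
\end{equation*}
using $n=n_1n_2$. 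Summing over the bijective parametrization factors the sum into a product of sums over $\mathbf{v}^{(1)}$ and $\mathbf{v}^{(2)}$, which are exactly $\rho(\bar{n}_2\mathbf{h},n_1)$ and $\rho(\bar{n}_1\mathbf{h},n_2)$ respectively.

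There is no real obstacle here — the argument is essentially formal given CRT and the definition of $\rho(\mathbf{h},n)$. The only point to be a little careful about is that CRT must be applied to each coordinate of $\mathbf{v}$ independently (not to $\mathbf{h}\cdot\mathbf{v}$ directly), so that one obtains a genuine bijection of tuples and the constraints $f_i(v_i)\equiv 0\bmod n$ split cleanly. Once that bookkeeping is in place, the identity follows immediately from the one-dimensional reciprocity $1\equiv n_2\bar{n}_2+n_1\bar{n}_1\bmod n$.
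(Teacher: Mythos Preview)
Your proposal is correct and follows essentially the same approach as the paper: both apply the Chinese Remainder Theorem componentwise to write $\mathbf{v}\equiv n_2\bar{n}_2\mathbf{v}^{(1)}+n_1\bar{n}_1\mathbf{v}^{(2)}\bmod n$, take the dot product with $\mathbf{h}$, and factor the resulting exponential sum. Your write-up is in fact slightly more explicit than the paper's about the coordinatewise bookkeeping, but the argument is identical.
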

\begin{proof}
Let $\mathbf{v}_i$ be solution of $\mathbf{f}(x)\equiv 0\mod n_i$ for $i=1,2$. By Chinese Remainder Theorem, each root $\mathbf{v}$ of $\mathbf{f}(x)\equiv 0\mod n$ can be uniquely write as
$$\mathbf{v}=n_2\bar{n}_2\mathbf{v}_1+n_1\bar{n}_1\mathbf{v}_2 \mod n.$$
So
\begin{align*}
\rho(\mathbf{h},n)
=&\sum_{\mathbf{f}(\mathbf{v})\equiv 0 (n)}
e(\mathbf{h}\cdot\mathbf{v}/n) \\
=&\sum_{\mathbf{f}(\mathbf{v}_i)\equiv 0 (n_i)}
e(\mathbf{h}\cdot(n_2\bar{n}_2\mathbf{v}_1
+n_1\bar{n}_1\mathbf{v}_2)/n) \\
=&\rho(\bar{n}_2\mathbf{h},n_1)
\rho(\bar{n}_1\mathbf{h},n_2)
\end{align*}
\end{proof}

We now prove Theorem 1.3.

{\it Proof of Theorem 1.3}.
For $\mathbf{h}\neq\mathbf{0}$, by Lemma 4.4,
\begin{align*}
\sum_{n\le x}|\rho(\mathbf{h},n)|
=&\sum_{n_1n_2\le x, \atop {(n_2,\eta D)=1
\atop p|n_1\Rightarrow p|\eta D}}
|\rho(\bar{n}_2\mathbf{h},n_1)\rho(\bar{n}_1\mathbf{h},n_2)| \tag{4.2} \\
\le &\sum_{n_1\le x \atop p|n_1\Rightarrow p|\eta D}
\rho(\mathbf{0},n_1)
\sum_{n_2\le x/n_1 \atop (n_2,\eta D)=1}
|\rho(\bar{n}_1\mathbf{h},n_2)|
\end{align*}
where $\bar{n}_1, \bar{n}_2$ satisfy $n_1\bar{n}_1 \equiv 1\mod n_2$ and
$n_2\bar{n}_2 \equiv 1\mod n_1$.
Let $\alpha=h_1\alpha_1+\cdots+h_r\alpha_r$. By Theorem 4.3,
$$\sum_{n_2\le x/n_1 \atop (n_2,\eta D)=1}
|\rho(\bar{n}_1\mathbf{h},n_2)|
=\sum_{n_2\le x/n_1 \atop (n_2,\eta D)=1}
\sum_{\mathfrak{a}\in S_{\eta}(n_2)}e\Big(
\frac{\bar{n}_1\alpha(\mathfrak{a})}{n_2}\Big).$$
Note that $\bar{n}_1$ is a function on $n_2$ satisfying the conditions listed in the proof of Theorem 1.2. So by (3.4),
$$\sum_{n_2\le x/n_1 \atop (n_2,\eta D)=1}
|\rho(\bar{n}_1\mathbf{h},n_2)|=o(x/n_1).$$
For a positive integer $n_1$ with all of its prime factors dividing $\eta D$, one has $\rho(\mathbf{0},n_1)\le O(d^{r\omega(\eta D)})$. Thus from (4.2) we derive that
$$\sum_{n\le x}|\rho(\mathbf{h},n)|\le C\sum_{n_1\le x \atop
p|n_1\Rightarrow p|\eta D}o(x/n_1)=o(x)$$
for some positive constant $C$. Therefore by Lemma 4.1, the sequence of ratios $(v_1/n,...,v_r/n)$ is uniformly distributed. \hfill$\Box$

\section{distribution of digits of $n$-adic expansions}
In this section we study the distribution of sequences concerning the digits of $n$-adic expansions of irrational algebraic numbers. Let $\alpha$ be an irrational algebraic number and $f(x)$ be the primitive minimal polynomial of $\alpha$ over $\mathbb{Z}$. Suppose that $\alpha$ has an $n$-adic expansion. Since $\mathbb{Z}_n\cong\prod_{p|n}\mathbb{Z}_p$, it implies that $f(x)$ has a solution in $\mathbb{Z}_p$ for $p|n$. So $\mathbb{Q}(\alpha)$ is a subfield of $\mathbb{Q}_p$. Let $\mathcal{O}$ be the ring of algebraic integers in $\mathbb{Q}(\alpha)$. Then $p\mathbb{Z}_p\cap\mathcal{O}$ is a unramified prime ideal of degree one of $\mathcal{O}$. The product $\prod_{p|n}(p\mathbb{Z}_p\cap\mathcal{O})^{v_p(n)}$ is an degree one unramified ideal with norm $n$.

Conversely, let $\mathfrak{a}$ be an unramified ideal of degree one with norm $n$. Write the principal fractional ideal $(\alpha)$ uniquely as a reduced ratio of integral ideals $\mathfrak{c}/\mathfrak{b}$. If $\mathfrak{a}$ is prime to $\mathfrak{b}$, then we have
$$\alpha\in\varprojlim_{l} \mathcal{O}/\mathfrak{a}^l
\cong\varprojlim_{l}\mathbb{Z}/n^l\mathbb{Z}.$$
So $\alpha$ has an $n$-adic expansion. Thus by above discussion we set up an one to one correspondence between the $n$-adic expansions of $\alpha$ and the degree one ideal with norm $n$ which is prime to the denominator of $(\alpha)$.

Now let $\alpha=\sum_{l=0}^{\infty}a_ln^l$ be an $n$-adic expansion of $\alpha$, and $\mathfrak{a}$ be the corresponding ideal. Choose any $\beta\in\mathfrak{b}\setminus\mathfrak{a}$ and denote by $\gamma:=\beta\alpha$. For any $l\ge 1$, define $\alpha(\mathfrak{a}^l)$ to be the unique integer in $\{0,1,...,n^l-1\}$ such that
$$\gamma-\beta \alpha(\mathfrak{a}^l)\in\mathfrak{a}^l.$$
Evidently $\alpha(\mathfrak{a}^l)$ is independent on the choice of $\beta$. This definition of $\alpha(\mathfrak{a}^l)$ coincides with the one we gave in the introduction if $\mathfrak{a}$ is prime to an integer $\eta$. Under this definition we have for all $l\ge 1$ that
$$\alpha(\mathfrak{a}^l)=a_0+a_1n+\cdots+a_{l-1}n^{l-1}. \leqno(5.1)$$
It implies that
$$\frac{a_{l-1}}{n}=\frac{\alpha(\mathfrak{a}^l)}{n^l}
+O\Big(\frac{1}{n}\Big). $$
So for fixed $l>0$, the sequence of ratios $a_{l-1}/n$ is uniformly distributed if and only if $\alpha(\mathfrak{a}^l)/n^l$ does.

Be aware that here $\mathfrak{a}$ runs through all unramified degree one ideals that are prime to the ideal $\mathfrak{b}$. It means that $\mathfrak{a}$ does not run through any set $S_{\eta, A}$ given in Theorem 3.8. So for $l=1$, we can not get the uniformity of the sequence of ratios $a_{l-1}/n$ directly from Theorem 3.8. But we do know that some of its subsequence are uniformly distributed by Theorem 1.2. In the following, we show that we can get the uniformity of the sequence of ratios $a_0/n$ by the uniformity of these subsequences.

Let $\eta$ be the least positive integer such that $\eta\alpha$ is integral, and $\mathfrak{b}$ be the denominator of $(\alpha)$ given above.
Denote by $D$ the discriminant of the primitive minimal polynomial of $\alpha$ over $\mathbb{Z}$. Theorem 1.2 gives us the uniformity of the sequence of ratios $\alpha(\mathfrak{a})/\mathfrak{N}(\mathfrak{a})$ as $\mathfrak{a}$ runs through all the the degree one ideals prime to $\eta D$.

Now consider any unramified degree one ideal $\mathfrak{a}$ that is prime to $\mathfrak{b}$. Suppose the norm of $\mathfrak{a}$ is $n$. Write $n=n_1n_2$ such that $p|n_1\Rightarrow p|\eta D$ and $(n_2,\eta D)=1$. Then there exists a unique pair of unramified degree one ideals $\mathfrak{a}_1$ with norm $n_1$ and $\mathfrak{a}_2$ with norm $n_2$ such that $\mathfrak{a}=\mathfrak{a}_1\mathfrak{a}_2$. We have $\alpha(\mathfrak{a}^l)-\alpha(\mathfrak{a}_1^l)\equiv 0\mod n_1^l$ and hence
$$\frac{\alpha-\alpha(\mathfrak{a}_1^l)}{n_1}\equiv \frac{\alpha(\mathfrak{a}^l)-\alpha(\mathfrak{a}_1^l)}{n_1^l}
\mod \mathfrak{a}_2^l.$$
In other words, $(\alpha(\mathfrak{a}^l)-\alpha(\mathfrak{a}_1^l))/n_1^l$ is the residue of $(\alpha-\alpha(\mathfrak{a}_1))/n_1^l$ modulo $\mathfrak{a}_2^l$. Denote by $U(n)$ the set of unramified degree one ideals with norm $n$. For any fixed $\mathfrak{a}_1$, denote by $\beta:=(\alpha-\alpha(\mathfrak{a}_1))/n_1^l$. If
$$\sum_{n_2<x/n_1 \atop (n_2,\eta D)=1}\sum_{\mathfrak{a}_2\in U(n_2)}
e(h\beta(\mathfrak{a}_2^l)/n_2^l)=o(x/n_1)$$
for all $n_1$ with $p|n_1\Rightarrow p|\eta D$ and $\mathfrak{a}_1\in U(n_1)$, then
\begin{align*}
&\sum_{n\le x}\sum_{\mathfrak{a}\in U(n)}e(h\alpha(\mathfrak{a}^l)/n^l) \\
=&\sum_{n_1\le x \atop p|n_1\Rightarrow p|\eta D}
\sum_{\mathfrak{a}_1\in U(n_1)}
\sum_{n_2\le x/n_1  \atop (n_2,\eta D)=1}
\sum_{\mathfrak{a}_2\in U(n_2)}
e\Big(h\cdot\Big(\frac{((\alpha-\alpha(\mathfrak{a}_1^l))/n_1^l)
(\mathfrak{a}_2)}{n_2}+O({1}/{n_2^l})\Big) \\
=&\sum_{n_1\le x \atop p|n_1\Rightarrow p|\eta D}
\sum_{\mathfrak{a}_1\in U(n_1)}o(x/n_1)
=o(x).
\end{align*}
Therefore if the sequence (1.1) with $S'$ given by (b) is uniformly distributed for all irrational algebraic numbers $\alpha$, then the sequence of ratios $a_{l+1}/n$ is also uniformly distributed. Specially for $l=0$, the uniformity of the sequence of ratios $a_0/n$ follows from Theorem 1.2.

It is interesting to compare this method with the one used in the proof of Theorem 1.3. Though it would be more complicated, this method can also be used to prove Theorem 1.3. The way to prove Theorem 1.3 is also applicable here. But the method in the proof of Theorem 1.3 uses more than just uniformity of subsequences. \\

In the following we study $n$-adic normal numbers. First we explore the relationship between normal numbers and uniformly distributed sequences.

\begin{lem}
Let $\alpha=\sum_{l=0}^{\infty}a_ln^l$ be an $n$-adic number. Denote by $\alpha_l=a_0+\cdots+a_{l-1}n^{l-1}$. Then $\alpha$ is normal if and only is $\{\alpha_l/n^l\}_{l=1}^{\infty}$ is uniformly distributed.
\end{lem}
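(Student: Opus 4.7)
The plan is to exploit the explicit base-$n$ expansion of the fractional numbers $\alpha_l/n^l$ and show that the statement reduces to the well-known fact that a sequence in $[0,1)$ is uniformly distributed iff it has the correct frequency in every $n$-adic subinterval.

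First I would record the key computation: since $\alpha_l = \sum_{j=0}^{l-1}a_j n^j$, we have
\[
\frac{\alpha_l}{n^l}=\sum_{j=0}^{l-1}\frac{a_j}{n^{l-j}}=\sum_{k=1}^{l}\frac{a_{l-k}}{n^k}\in[0,1),
\]
so the first $m$ base-$n$ digits of $\alpha_l/n^l$ are exactly $a_{l-1},a_{l-2},\ldots,a_{l-m}$, that is, the reverse of the block $(a_{l-m},\ldots,a_{l-1})$. Consequently, for each word $\omega=(\omega_1,\ldots,\omega_m)\in\{0,\ldots,n-1\}^m$, if we set $c_\omega=\sum_{i=1}^m \omega_i n^{m-i}$, then for every $l\ge m$
\[
\frac{\alpha_l}{n^l}\in\Big[\frac{c_\omega}{n^m},\frac{c_\omega+1}{n^m}\Big)
\iff (a_{l-m},a_{l-m+1},\ldots,a_{l-1})=(\omega_m,\omega_{m-1},\ldots,\omega_1).
\]
Since reversal is a bijection on words of length $m$, and a shift of the index $l$ by $m$ does not affect asymptotic densities, this identifies the density of $l$'s for which $\alpha_l/n^l$ lies in the $n$-adic interval indexed by $\omega$ with the density of occurrences in $(a_l)_{l\ge 0}$ of the reversed word.

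Next I would prove both directions using this bijection. If $\alpha$ is normal, then each word of length $m$ has density $1/n^m$, hence the sequence $\{\alpha_l/n^l\}$ hits each of the $n^m$ level-$m$ $n$-adic intervals with density $1/n^m$; to upgrade this to uniform distribution on arbitrary subintervals $[a,b)\subset[0,1)$, I would sandwich $[a,b)$ between finite unions of level-$m$ $n$-adic intervals whose measures differ from $b-a$ by at most $2/n^m$, take the corresponding $\liminf$ and $\limsup$ of the normalized counts, and then let $m\to\infty$. For the converse, uniform distribution immediately gives the correct density $1/n^m$ on every level-$m$ $n$-adic interval, and the bijection above translates this back into the normality condition for every word $\omega$.

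The main conceptual point to get right is the index reversal in the digit correspondence; once that is pinned down the argument is purely a standard approximation of arbitrary intervals by $n$-adic ones, so I expect no serious obstacle. I will remark that the definition of normality given in the paper uses blocks $(a_l,\ldots,a_{l+m-1})$ starting at position $l$, while the bijection above produces blocks $(a_{l-m},\ldots,a_{l-1})$ ending at position $l-1$; these two counting schemes differ only by a bounded index shift, so they yield the same densities and can be used interchangeably.
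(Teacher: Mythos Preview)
Your argument is correct; the digit-reversal identification and the sandwich by $n$-adic intervals both work as described. The implication ``uniformly distributed $\Rightarrow$ normal'' is handled the same way in the paper: both you and the author read off the block frequencies from the density in the intervals $[i/n^m,(i+1)/n^m)$.

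The genuine difference is in the converse direction. You argue elementarily: normality gives the right density in every $n$-adic interval, and sandwiching any $[a,b)$ between unions of such intervals upgrades this to full uniform distribution. The paper instead verifies Weyl's criterion directly: it groups the terms of $\sum_{l<x}e(\alpha_l/n^l)$ according to the trailing block $(a_{l-m},\dots,a_{l-1})$, uses normality to say each block occurs $x/n^m+o(x)$ times, observes that within a block the exponential is $e(w(n)/n^m)+O(1/n^m)$, and then uses $\sum_{w}e(w(n)/n^m)=0$ to get $\sum_{l<x}e(\alpha_l/n^l)=O(x/n^m)$ for every $m$, hence $o(x)$. Your approach is more self-contained and avoids any appeal to Weyl; the paper's approach is in keeping with its exponential-sum framework and yields the Weyl-sum bound explicitly, which is the form needed elsewhere in the article. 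Either route is perfectly adequate for this lemma.
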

\begin{proof}
If $\{\alpha_l/n^l\}_{l=1}^{\infty}$ is uniformly distributed, then for any positive integer $m$,
$$\lim_{x\rightarrow\infty}\frac{|\{l<x:\alpha_l/n^l
\in[i/n^m,(i+1)/n^m]\}|}{x}=\frac{1}{n^m}.$$
Thus for $0\le i\le n^m-1$, one has
$$|\{l<x: a_{l-m}+\cdots+a_{l-1}n^{m-1}=i\}|=O(x/n^m)+o(x).$$
So $\alpha$ is normal.

Suppose $\sum_{l=0}^{\infty}a_ln^l$ is normal. For $m>0$, denote by $\Sigma_m:=\{0,...,n-1\}^m$. For $w=(a_0,...,a_{m-1})\in\Sigma_m$, let $w(n):=a_0+a_1n+\cdots a_{m-1}n^{m-1}$. Then for given $m$ and $w\in\Sigma_m$,
$$|\{l<x: (a_l,...,a_{l+m-1})=w\}|=x/n^m+o(x).$$
Equivalently,
$$|\{l<x: (a_{l+m-1},a_{l+m-2},...,a_l)=w\}|=x/n^m+o(x).$$
Then for all positive integers $m$,
\begin{align*}
\sum_{l<x}e\Big(\frac{\alpha_l}{n^l}\Big)
=&\sum_{l<x}e\Big(\frac{a_0+a_1n+\cdots a_{l-1}n^{l-1}}{n^l}\Big)\\
=&\sum_{w\in\Sigma_m}\sum_{m<l<x \atop (a_{l-m},...,a_l)=w}
e\Big(\frac{a_0+a_1n+\cdots a_{l-1}n^{l-1}}{n^l}\Big)+O(1)\\
=&\Big(\frac{x}{n^m}+o(x)\Big)\sum_{w\in\Sigma_m}
e\Big(\frac{w(n)}{n^m}+O\Big(\frac{1}{n^m}\Big)\Big)+O(1) \\
=&O\Big(\frac{x}{n^m}\Big)
\end{align*}
This implies $\sum_{l<x}e(\alpha_l/n^l)=o(x)$. That is, the sequence  $\{\alpha_l/n^l\}_{l=1}^{\infty}$ is uniformly distributed.
\end{proof}
Following Lemma 5.1 and equation (5.1), we have the following equivalence for irrational algebraic numbers $\alpha$.
\begin{cor}
Let $\alpha$ be an irrational algebraic number and $\sum_{l=1}^{\infty}a_ln^l$ be an $n$-adic expansion of $\alpha$. Let $\mathfrak{a}$ be the integral ideal of $\mathbb{Q}(\alpha)$ corresponding to this $n$-adic expansion. Then the $\sum_{l=1}^{\infty}a_ln^l$ is normal if and only if $\{\alpha(\mathfrak{a}^l)/n^l\}_{l=1}^{\infty}$ is uniformly distributed.
\end{cor}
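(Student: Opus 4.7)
The plan is to obtain Corollary 5.2 as a direct consequence of Lemma 5.1 combined with the identification (5.1), with essentially no additional work required.

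First, I would unpack what Lemma 5.1 provides when specialized to the $n$-adic number $\alpha = \sum_{l=0}^{\infty} a_l n^l$. The lemma asserts that the normality of this expansion is equivalent to the uniform distribution of $\{\alpha_l/n^l\}_{l=1}^{\infty}$, where $\alpha_l := a_0 + a_1 n + \cdots + a_{l-1} n^{l-1}$ is the $l$-th truncation of the expansion. Thus the claim in Corollary 5.2 will follow as soon as one identifies this truncation sequence with $\{\alpha(\mathfrak{a}^l)\}_{l \geq 1}$.

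Next, I would invoke the key identity (5.1), which states that $\alpha(\mathfrak{a}^l) = a_0 + a_1 n + \cdots + a_{l-1} n^{l-1}$ under the ideal-theoretic definition of $\alpha(\mathfrak{a}^l)$ given in the paragraph preceding (5.1). For this to apply, I would first verify that the ideal $\mathfrak{a}$ named in the corollary is indeed the ideal associated to the given $n$-adic expansion via the bijection set up at the beginning of Section~5 — this is precisely the hypothesis of the corollary, so it is immediate. Consequently $\alpha_l = \alpha(\mathfrak{a}^l)$ for every $l \geq 1$, and the two sequences $\{\alpha_l/n^l\}$ and $\{\alpha(\mathfrak{a}^l)/n^l\}$ are literally the same sequence in $[0,1)$.

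Combining the two observations, the equivalence in Corollary 5.2 is nothing more than Lemma 5.1 restated in ideal-theoretic language via (5.1). There is no real obstacle; the only point requiring care is confirming that (5.1) may be applied even though $\mathfrak{a}$ need not be prime to the auxiliary integer $\eta$ used in the definition given in the introduction — but this is exactly the content of the remark following the alternative definition in Section~5, which explains that the two definitions agree whenever both apply and that the $\mathfrak{a}^l$-based definition is well-posed in the generality we need.
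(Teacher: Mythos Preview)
Your proposal is correct and matches the paper's own argument exactly: the paper simply states that Corollary 5.2 follows from Lemma 5.1 and equation (5.1), which is precisely the identification $\alpha_l=\alpha(\mathfrak{a}^l)$ you spell out. No additional ideas are needed, and your extra remark about the compatibility of the two definitions of $\alpha(\mathfrak{a}^l)$ is a nice clarification.
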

We can restate Conjecture 1.4 by the distribution of residues of $\alpha$ modulo $\mathfrak{a}^l$.
\begin{conj}
Let $L$ be an number field and $\alpha\in L$ be irrational. Let $\mathfrak{a}$ be an unramified ideal of degree one that is prime to the dominator of the principal ideal $(\alpha)$. Then the sequence $\{\alpha(\mathfrak{a}^l)/n^l\}_{l=1}^{\infty}$ is uniformly distributed.
\end{conj}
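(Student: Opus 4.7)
By Weyl's criterion, it suffices to show that for every nonzero integer $h$,
$$S_N(h) := \sum_{l=1}^{N} e\!\left(\frac{h\,\alpha(\mathfrak{a}^l)}{n^l}\right) = o(N), \qquad n := \mathfrak{N}(\mathfrak{a}).$$
Setting $x_l := \alpha(\mathfrak{a}^l)/n^l$, equation (5.1) yields the recursion $x_{l+1} = (x_l + a_l)/n$, so $x_l = \sum_{k=1}^{l} a_{l-k}/n^k$ and $T(x_{l+1}) = x_l$ where $T(y) = \{ny\}$ is the base-$n$ shift. By Corollary 5.2 the desired uniform distribution of $\{x_l\}$ is exactly normality of the $\mathfrak{a}$-adic expansion of $\alpha$.

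My plan is to exploit the product factorization
$$e(h\,x_l) = \prod_{k=1}^{l} e\!\left(\frac{h\,a_{l-k}}{n^k}\right),$$
truncating at some level $K = K(N)$ (the tail contributes $O(|h|/n^K)$ uniformly in $l$) and interpreting what remains as a weighted correlation of digit blocks, so that $S_N(h) = o(N)$ reduces to the equidistribution of length-$K$ blocks of digits of $\alpha$. To inject the algebraic structure of $\alpha$, I would use Hensel's lemma: the residues $\alpha(\mathfrak{a}^l)$ form a compatible tower of roots of the minimal polynomial $f$ modulo $\mathfrak{a}^l$, and one may try to transport the global averaging in Theorem 1.2, which runs over all degree-one ideals of norm at most $x$, into local averaging along the sparse chain $\mathfrak{a}, \mathfrak{a}^2, \mathfrak{a}^3, \ldots$ by means of an analytic device that restricts the Hooley-type Dirichlet series to moduli of the form $n^l$ (for instance via Hecke characters detecting prime-power norms, or via a $\mathfrak{p}$-adic contour shift for the twisted $L$-series considered in Lemma 3.3).

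The main obstacle — and the reason this assertion is posed only as a conjecture — is that it is the $\mathfrak{a}$-adic counterpart of Borel's conjecture that every irrational real algebraic number is absolutely normal, which remains open even for $\sqrt{2}$. The arguments of Section 3 draw their strength from averaging $\rho_\eta(h,n)$ over \emph{all} $n\le x$, and that averaging collapses when $n$ is restricted to powers of a single modulus: the Dirichlet series $\sum_{l\ge 1} \rho_\eta(h,n^l) n^{-ls}$ inherits no pole structure that could drive a Tauberian argument in the style of Lemma 2.4, and the prime-ideal input (Lemma 3.4) is similarly inert. A complete proof would almost certainly require a genuinely new diophantine ingredient — such as a $\mathfrak{p}$-adic analogue of Roth's theorem giving an effective irrationality measure for $\alpha$ in the $\mathfrak{a}$-adic valuation, or an ergodic argument showing that the orbit of an algebraic $\alpha$ under the base-$n$ shift on $\mathbb{Z}_n$ equidistributes for Haar measure — rather than a refinement of the exponential-sum machinery developed in this paper. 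I expect the strategy above to reduce the problem to a concrete digit-correlation estimate but to stall precisely at this point without such an external input.
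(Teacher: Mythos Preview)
Your assessment is correct: the paper does \emph{not} prove this statement. It is stated as Conjecture~5.3, an equivalent reformulation (via Corollary~5.2 and equation~(5.1)) of Conjecture~1.4, the $n$-adic analogue of Borel's normality conjecture. The paper offers no proof and no proof sketch; the only supporting evidence given is Theorem~5.4, which shows that Haar-almost every element of $\mathbb{Z}_n$ is normal, mirroring Borel's classical measure-one result for real numbers.

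Your diagnosis of the obstruction matches the paper's implicit reasoning. The exponential-sum machinery of Section~3 relies essentially on averaging $\rho_\eta(h,n)$ over all $n\le x$; restricting to the sparse set $\{n^l : l\ge 1\}$ for a fixed $n$ (case~(c) in the introduction) kills that averaging entirely, and the paper makes no claim to handle it. Your remark that a proof would likely require a new diophantine or ergodic ingredient beyond the Hooley-type estimates is well taken and consistent with the paper's positioning of the problem as an open conjecture parallel to the notoriously hard real case.
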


Finally we give the following fact to support the normal number conjecture. The ideal of the proof is originally from \cite{KN}.
\begin{thm}
Almost all $n$-adic numbers are normal with respect to the Haar measure.
\end{thm}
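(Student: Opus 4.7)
The plan is to invoke Lemma 5.1 to reduce normality of $\alpha = \sum_{l \ge 0} a_l n^l$ to uniform distribution of the sequence $\{\alpha_l/n^l\}_{l \ge 1}$, and then to establish the latter almost surely via Weyl's criterion together with a second-moment Borel--Cantelli argument.

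The first step is to describe the Haar measure on $\mathbb{Z}_n$ at the level of digits. Using the isomorphism $\mathbb{Z}_n \cong \prod_{p|n}\mathbb{Z}_p$, the Haar measure factors as a product of the normalized Haar measures on each $\mathbb{Z}_p$. For each $l$, reduction modulo $n^l$ factors through the $p$-adic components and pushes the product measure forward to the uniform measure on $\mathbb{Z}/n^l\mathbb{Z}$. Consequently $\alpha_l = a_0 + a_1 n + \cdots + a_{l-1}n^{l-1}$ is uniform on $\{0, 1, \ldots, n^l-1\}$, which is to say that the digits $(a_l)_{l \ge 0}$ are i.i.d.\ uniform on $\{0, 1, \ldots, n-1\}$.

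Fix $h \ne 0$ and put $S_N(\alpha) := \sum_{l=1}^N e(h \alpha_l / n^l)$. For $l < l'$, writing $\alpha_{l'} = \alpha_l + n^l \widetilde\alpha$ with $\widetilde\alpha$ independent of $\alpha_l$ and uniform on $\{0, \ldots, n^{l'-l}-1\}$, the expectation of $e(h\alpha_l/n^l - h\alpha_{l'}/n^{l'})$ splits as a product: the factor depending on $\widetilde\alpha$ vanishes unless $n^{l'-l}\mid h$, and a short computation using $\gcd(n^{l'-l}-1,\,n^l)=1$ shows that the factor depending on $\alpha_l$ then vanishes unless $n^{l'}\mid h$. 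Hence only finitely many off-diagonal pairs $(l,l')$, all with $l<l'\le\log_n|h|$, contribute, giving $\int |S_N|^2\,d\mu = N + O_h(1)$. Chebyshev along $N_k=k^2$ then makes $\mu(|S_{N_k}|/N_k>\epsilon)$ summable, Borel--Cantelli gives $S_{N_k}/N_k\to 0$ almost surely, and the trivial bound $|S_{N+1}-S_N|\le 1$ interpolates to all $N$. A countable union over $h\in\mathbb{Z}\setminus\{0\}$ completes the verification of Weyl's criterion almost surely, and Lemma 5.1 concludes.

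The main delicate point is the first step: identifying the digit distribution for composite $n$. Because $\mathbb{Z}_n$ is not a discrete valuation ring when $n$ has more than one prime factor, one cannot simply quote the familiar $p$-adic fact, and the argument must be routed through the product decomposition $\mathbb{Z}_n \cong \prod_{p|n}\mathbb{Z}_p$ and the compatibility of Haar measure with reduction modulo $n^l$. Once this is in place the probabilistic part is classical, as indicated by the reference \cite{KN}.
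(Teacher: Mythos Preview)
Your proposal is correct and follows essentially the same route as the paper: a second-moment computation for the Weyl sums, passage to the subsequence $N_k=k^2$, interpolation to all $N$, and a countable union over $h$. Your off-diagonal bookkeeping is in fact slightly more careful than the paper's---the paper asserts that $hx_l-hn^{l-k}x_k$ runs over a complete residue system modulo $n^l$, which is only literally true when $\gcd(h,n)=1$; your observation that the finitely many exceptional pairs contribute $O_h(1)$ is the right fix and does not affect the argument.
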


\begin{proof}
Let $x=\sum_{l=0}^{\infty}a_ln^l$ and $x_l=a_0+a_1n+\cdots+a_{l-1}n^{l-1}$ for all $l\ge 0$. Define
$$S(N,x):=\frac{1}{N}\sum_{l=0}^{N}e\Big(\frac{hx_l}{n^l}\Big).$$
Let $\mu$ denote the Haar measure on $\mathbb{Z}_n$. Then
$$\int_{\mathbb{Z}_n}|S(N,x)|^2d\mu=\frac{1}{N^2}\sum_{k,l=1}^{N}
\int_{\mathbb{Z}_n}e\Big(\frac{hx_l}{n^l}-\frac{hx_k}{n^k}\Big)d\mu.$$
Analysis the integral
$\int_{\mathbb{Z}_n}e\Big(\frac{hx_l}{n^l}-\frac{hx_k}{n^k}\Big)d\mu$.
Without loss of generality, suppose $l\ge k$. Then $\frac{hx_l}{n^l}-\frac{hx_k}{n^k}$ is constant for all $x\in i+n^l\mathbb{Z}_n$ for each integer $i$ with $0\le i\le n^l-1$. So
\begin{align*}
\int_{\mathbb{Z}_n}e\Big(\frac{hx_l}{n^l}-\frac{hx_k}{n^k}\Big)d\mu
=&\sum_{i=0}^{n^l-1}\int_{i+n^l\mathbb{Z}_n}e\Big(\frac{hx_l}{n^l}
-\frac{hx_k}{n^k}\Big)d\mu \\
=&\frac{1}{n^l}\sum_{x=0}^{n^l-1}
e\Big(\frac{hx_l-hn^{l-k}x_k}{n^l}\Big).
\end{align*}
If $l\neq k$, as $x$ running through $0$ to $n^l-1$, the $hx_l-hn^{l-k}x_k$ exactly runs through a complete residue system modulo $n^l$. So
$$\int_{\mathbb{Z}_n}e\Big(\frac{hx_l}{n^l}-\frac{hx_k}{n^k}\Big)d\mu
=\left\{
        \begin{array}{ll}
          0 & \hbox{if $k\neq l$} \\
          1 & \hbox{if $k=l$.}
        \end{array}
\right. $$
Thus
$$\int_{\mathbb{Z}_n}|S(N,x)|^2d\mu=\frac{1}{N}.$$
This implies
$$\sum_{N=1}^{\infty}\int_{\mathbb{Z}_n}|S(N^2,x)|^2d\mu<\infty.$$
By the monotonic convergence theorem,
$$\int_{\mathbb{Z}_n}\sum_{N=1}^{\infty}|S(N^2,x)|^2d\mu
=\sum_{N=1}^{\infty}\int_{\mathbb{Z}_n}|S(N^2,x)|^2d\mu
<\infty.$$
So $\sum_{N=1}^{\infty}|S(N^2,x)|^2<\infty$ for all $x$ but a set of measure zero respect to $\mu$. Hence $\lim_{N\rightarrow\infty}|S(N^2,x)|=0$ for almost all $x$. Now for any positive integer $N$, let $m^2\le N<(m+1)^2$. Then
$$|S(N^2,x)|\le|S(m^2,x)|+\frac{2m}{N}\le|S(m^2,x)|+\frac{1}{\sqrt{N}}.$$
It holds that $\lim_{N\rightarrow\infty}|S(N,x)|=0$ for almost all $x$. The exceptional $x$ contained in a set has measure zero depending on $h$.
Since the union of countably many measure zero set has measure zero. So the Weyl criterion shows that almost all $x$ is normal with respect to the Haar measure.
\end{proof}

\section*{}
{\bf Acknowledgements} Research of the author is supported by NSFC 11901415. This paper is partially done during visiting the School of Mathematics of the University of Minnesota. The author would like to thank the Chinese Scholarship Council (CSC201808515110) for their financial support and Professor Steven Sperber for his hospitality. He also would like to thank Professor Shaofang Hong and Dr. Guoyou Qian for their useful suggestions and discussions.


\bibliographystyle{plain}

\end{document}